\numberwithin{equation}{section}
\DeclareMathOperator{\dist}{dist}
\DeclareMathOperator{\Isom}{Isom}
\DeclareMathOperator{\Sim}{Sim}
\DeclareMathOperator{\id}{id}
\DeclareMathOperator{\cl}{cl}
\renewcommand{\P}{\mathbb{P}}
\newcommand{\R}{\mathbb{R}}
\newcommand{\Q}{\mathbb{Q}}
\newcommand{\Z}{\mathbb{Z}}
\newcommand{\ii}{{\bf i}}
\newcommand{\jj}{{\bf j}}
\newcommand{\kk}{{\bf k}}
\newcommand{\mm}{{\bf m}}
\newcommand{\cH}{\mathcal{H}}
\newcommand{\emp}{\emptyset}
\renewcommand\epsilon{{\varepsilon}}
\newcommand\eps{{\varepsilon}}
\declaretheorem[numberwithin=section,name=Theorem]{theorem}
\declaretheorem[sibling=theorem,name=Lemma]{lemma}
\declaretheorem[sibling=theorem,name=Proposition]{proposition}
\declaretheorem[sibling=theorem,name=Corollary]{corollary}
\declaretheorem[sibling=theorem,name=Remark,style=remark]{remark}
\title[Kakeya, Besicovitch, Nikodym for rectifiable curves]{The Kakeya needle problem and the existence of Besicovitch and Nikodym sets for rectifiable sets}
\author{Alan Chang, Marianna Cs\"ornyei}
\address{Department of Mathematics, The University of Chicago, 5734 S. University Avenue, Chicago, IL 60637, USA}
\subjclass[2010]{	28A75} 
\keywords{Kakeya needle problem, Besicovitch set, Nikodym set, rectifiable set}
\email{ac@math.uchicago.edu}
\email{csornyei@math.uchicago.edu}
\begin{document}

\begin{abstract}
We solve the Kakeya needle problem and construct a Besicovitch and a Nikodym set for rectifiable sets.
\end{abstract}

\maketitle

\section{Introduction}
Let $E\subset\R^2$ be a rectifiable set. Our aim in this paper is to show that the classical results about rotating a line segment in arbitrarily small area, and the existence of a Besicovitch and a Nikodym set hold if we replace the line by the set $E$. We will explain our results in more details below, but first we present two illustrative examples.

\begin{enumerate}
\item
If $E$ is the graph of a convex function $f:\,\R\to\R$, our results imply the following:
\emph{E can be rotated continuously by $360^\circ$ covering only a set of zero Lebesgue measure, if at each time moment $t$ we are allowed to delete just one point from the rotated copy of $E$.} 
\item 
If $f$ is not just convex but strictly convex, then: \emph{E can be moved continuously, using only translations, to any other shifted position, covering a set of measure zero, if at each time moment $t$ we are allowed to delete just one point from the translated copy of $E$.}
\end{enumerate}

\begin{remark} In the two examples given above, our movement $t\mapsto E_t $ is continuous, but the point $x_t \in E_t$ we delete cannot be chosen continuously. However, all our constructions in this paper are Borel. 

In the first example, if we take $E$ to be a general rectifiable set, the result still holds, but instead of a single point, we need to delete an $\cH^1$-null subset of $E$ (see \Cref{theorem:besicovitch-rot}). For the generalization of the second example to rectifiable sets, see \Cref{theorem:besicovitch}.

\end{remark}

In the first example, $\bigcup_t (E_t\setminus \{x_t\})$ is Lebesgue null. Therefore, $\bigcup_t (E_t\setminus \{x_t\})$ is a \emph{Besicovitch set}: in each direction it contains not just a ``unit line segment of the line $E$'' but a whole copy of the set $E$ except for one of its points. 

On the other hand, since $\bigcup_t E_t$ has non-empty interior, we can cover $\R^2$ by taking a countable union of copies of $\bigcup_t E_t$. Therefore, the countable union of copies of $\bigcup_t(E_t\setminus \{x_t\})$ is a \emph{Nikodym set}: it has measure zero, and through each point $x\in \R^2$, it contains a copy of the set $E$ with one point removed.

For the case when $E$ is a line, see, e.g., \cite{mattila15} for both classical and recent results.

\subsection{History}\label{subsection:history}
The Kakeya needle problem for sets other than the line segment has been studied before. 
R.O. Davies proved in \cite{davies} that not only one but any finite union of parallel line segments can be rotated by $360^\circ$ covering arbitrarily small area. He also showed that the line segments must be parallel: if a set contains two line segments that are not parallel to each other, then it can no longer be moved.

\medskip
In \cite{chl} the authors introduced the following definitions: a planar set $E$ has the \emph{Kakeya property} if there exist two different positions of $E$ such that $E$ can be moved continuously from the first position to the second in such a way that the area covered by $E$ along the movement is arbitrarily small. A planar set $E$ has the \emph{strong Kakeya property} if it can be moved in the plane continuously to any other shifted or rotated position in a set of arbitrarily small area.

In \cite{chl} it is shown that if $E$ is a closed connected set that has the Kakeya property, then $E$ must be a subset of a line or of a circle. Moreover, if $E$ is an arbitrary closed set that has the Kakeya property, then the union of the non-trivial connected components of $E$ must be a subset of parallel lines or of concentric circles. 

In \cite{hl} the authors show that short enough circular arcs of the unit circle possess the strong Kakeya property. (For topological reasons, it is clear that a full circle does not have the strong Kakeya property.)

\subsection{Translations}
Let us consider a related question for circular arcs: can we \emph{translate} a full circle continuously to any other position covering arbitrarily small area, if at each point of the translation, we are allowed to delete an arc of the circle of a given length? How long must the deleted arc be? Because of rotational symmetry, the question of which circular arcs have the strong Kakeya property is equivalent to this one, as long as we choose the deleted arc piecewise continuously.

In this paper, we will answer this ``piecewise continuous question'' for an arbitrary rectifiable set $E$ of finite $\cH^1$-measure in the following way: we only need to delete points whose tangent directions lie in a small interval.

\medskip
Let us state our results precisely. We will use the following notation and terminology.

We let $\P^1 \simeq \R/\pi\Z$ denote the set of all directions in $\R^2$. We will use the standard embedding of $\R^2$ into the projective plane $\P^2$, so that $\P^2 = \R^2 \cup \P^1$. The arc-length metric on the unit sphere $S^2$  together with the quotient map $S^2 \to \P^2$ gives us a metric on $\P^2$. Let $(\P^2)^*$ denote all the lines in $\P^2$.

 We denote by $|\cdot|$ the Lebesgue measure on $\R^2$ or $\P^1$, and by $\cH^1$  the $1$-dimensional Hausdorff measure on $\R^2$. As usual, $B(x,r)$ denotes the open ball centered at $x$ of radius $r$, and $B(S, r)$ denotes the open $r$-neighborhood of a set $S$. We denote by $\cl S$ the closure of $S$. We write $A \lesssim B$ to mean $A \leq CB$ for some absolute constant $C > 0$.
 
Recall that every rectifiable set $E \subset \R^2$ has a tangent field, which is defined for $\cH^1$-almost every $x \in E$ (see \Cref{sec:rectifiable}).  We let $\theta_x \in \P^1$ denote the tangent of $E$ at $x$, and we let $\nu_x \in (\P^2)^*$ denote the normal line of $E$ at $x$. (The direction of $\nu_x$ is the one orthogonal to $\theta_x$.) Note that $\nu_x$ is the normal \emph{line} passing through the point $x$, and not just a normal vector.

We will start by proving the following theorem:

\begin{theorem}[Kakeya needle problem for translations]
\label{theorem:translations}
Let $E \subset \R^2$ be a rectifiable set of finite $\cH^1$-measure. Let $\epsilon > 0$ be arbitrary. Then between the origin and any prescribed point in $\R^2$, there exists a polygonal path $P = \bigcup_{i=1}^n L_i$ with each $L_i$ a line segment, and for each $i$ there exists a direction $\theta_i\in\P^1$, such that
\begin{equation}
\label{eq:theorem-translations}
|\bigcup_{i}\bigcup_{p\in L_i} (p+\{x\in E:\,\theta_x\not\in B(\theta_i,\eps)\})|< \epsilon.
\end{equation}
\end{theorem}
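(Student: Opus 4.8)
The plan is to reduce the problem, by routine measure-theoretic steps, to the situation in which $E$ is a finite union of almost-straight $C^1$ pieces, and then to exploit the freedom we have to forget, on each segment of the path, an \emph{entire angular interval} $B(\theta_i,\eps)$ of tangent directions at once. Write $q$ for the prescribed point.

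\textbf{Reductions.} First I would pass to the case that $E$ is compact: discarding from $E$ a set of small $\cH^1$-measure changes the set whose measure we must bound by a set of small Lebesgue measure, provided the path we eventually build has length bounded in terms of $E$ and $q$ (which it will). Indeed, for a rectifiable set $N$, translating $N$ along a polygonal path of total length $\Lambda$ sweeps area at most $\Lambda\,\cH^1(N)$ — slice in the direction of each segment and use that $N$ meets almost every line transverse to that direction in finitely many points. Since $E$ is rectifiable with finite $\cH^1$-measure, up to an $\cH^1$-null set it is covered by countably many $C^1$ arcs; keeping finitely many of them, I reduce to the case where $E$ is a finite union of $C^1$ arcs (plus a negligible rectifiable remainder). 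Subdividing each arc, I arrive at the model situation
\[
E=E_1\cup\dots\cup E_m,\qquad\text{with }E_j\text{ a $C^1$ arc whose tangent stays within }\rho\text{ of a fixed }\theta_j^*\in\P^1,
\]
where $\rho>0$ is a small parameter to be fixed only at the end, in terms of $\eps$, $q$, $\cH^1(E)$, and the mutual distances of the $\theta_j^*$.

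\textbf{The basic mechanism.} If on a segment $L$ of length $\ell$ pointing in a direction $d$ we choose $\theta_i=\theta_j^*$, then the set swept on $L$ is $\bigcup_{\,l:\,|\theta_l^*-\theta_j^*|>\eps}(E_l+L)$, and each $E_l+L$ lies in a parallelogram of area $\lesssim \ell\,\cH^1(E_l)\bigl(|\sin(\theta_l^*-d)|+\rho\bigr)+\rho\,\cH^1(E_l)^2$. So on $L$ two kinds of pieces cost essentially nothing: the pieces \emph{deleted} by the choice of $\theta_i$, and the pieces whose direction is (nearly) \emph{parallel} to the direction of motion. This already handles $m=1$ — take $\theta_i\equiv\theta_1^*$ and the straight segment from the origin to $q$, and nothing at all is swept — and $m=2$ with $\theta_1^*\neq\theta_2^*$: writing $q=c_1e^{i\theta_1^*}+c_2e^{i\theta_2^*}$, I use the two-segment path through $c_1e^{i\theta_1^*}$, moving in direction $\theta_1^*$ on the first segment (deleting near $\theta_2^*$) and in direction $\theta_2^*$ on the second (deleting near $\theta_1^*$); on each segment only one piece is swept and it is nearly parallel to the motion, for a total area $\lesssim\rho\,(|c_1|+|c_2|)\,\cH^1(E)\lesssim\rho\,\cH^1(E)\,|q|/\sin|\theta_1^*-\theta_2^*|<\eps$ once $\rho$ is small enough.

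\textbf{The general case and the main obstacle.} For general $m$ I would argue by induction, assembling the path from two kinds of moves. A \emph{shuttle} travels a long distance in a single direction $\theta_j^*$ and returns along essentially the same corridor: during it $E_j$ moves parallel to itself and hence sweeps a null set, the single available deletion is spent on one further piece, and — crucially, because the corridor is traversed out and then back — the thin parallelograms swept by all the remaining pieces pile up on one another rather than accumulating. In between the shuttles one inserts short \emph{correction} sub-paths, obtained from the inductive hypothesis applied to a configuration with one piece removed and carried out while $E$ sits in a small region, so that the pieces swept during them stay confined. One then sequences shuttles and corrections so that the net displacement is exactly $q$, keeping the total path length bounded in terms of $E$ and $q$, and finally picks $\rho$ small enough to make the swept area less than $\eps$. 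The hard part is exactly this bookkeeping: a piece $E_l$ translated a distance $t$ transverse to its own direction sweeps area of order $t\,\cH^1(E_l)$, so a crude count only gives a total of order $|q|\,\cH^1(E)$, which is not small. The whole point of the construction is to force, for every $l$, all the ``expensive'' portions of the motion of $E_l$ to take place either along a single line in direction $\theta_l^*$ (contributing a null set) or inside one small region, so that the many thin parallelograms that $E_l$ sweeps overlap and their union stays small; organising these overlaps is an iterated, Perron-tree/Besicovitch-type construction and constitutes the technical heart of the argument.
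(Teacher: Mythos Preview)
Your reductions and the ``basic mechanism'' are fine, and your $m\le 2$ cases are correct, but the proposal stops precisely where the proof begins: you explicitly defer ``organising these overlaps'' to an unspecified ``iterated, Perron-tree/Besicovitch-type construction,'' and the shuttle/correction induction you sketch does not close. A shuttle goes out and back, so its net displacement is zero; hence all of the displacement $q$ must come from the corrections. But you require each correction to be carried out ``while $E$ sits in a small region,'' and during a correction the piece $E_j$ you have dropped from the inductive call is neither deleted nor moving along itself, so it sweeps area of order $(\text{diameter of the correction})\cdot\cH^1(E_j)$. Summing over enough corrections to produce total displacement $|q|$ gives area of order $|q|\,\cH^1(E_j)$, which is exactly the crude bound you said must be beaten. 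Your parenthetical about swept strips ``piling up'' during shuttles does not help here: the expensive motion of $E_j$ happens during the corrections, not the shuttles, and you have not arranged for those regions to overlap.

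The paper's mechanism is different and supplies the idea your outline is missing. Rather than trying to make swept regions overlap spatially, it uses a \emph{small neighborhood lemma}: once a compact piece $R\subset E$ has been shown to sweep small area along a segment $L$, it continues to sweep small area along any sufficiently fine zigzag contained in a small neighborhood of $L$. This lets one iterate: replace $L$ by a Venetian blind whose ``good'' segments point in a new direction $\theta'$, thereby taming a new piece $R'$, while the previously tamed piece $R$ stays tamed automatically because the blind lies in a thin tube around $L$. Repeating, the set of tangent directions handled on the good segments grows by a definite amount at every stage (this is where the angular condition \eqref{rat} enters), until it misses at most an $\eps$-arc of $\P^1$; the ``bad'' segments shrink geometrically in total length and are discarded via the crude $\cH^1(E)\cdot\cH^1(P)$ bound you already noted. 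The bookkeeping is then a sum over a binary tree rather than an induction on $m$, and no spatial-overlap argument is needed.
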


Although the tangent field of a rectifiable set is defined only $\cH^1$-almost everywhere, for the statement of \Cref{theorem:translations} (and for all other results in this paper), we need to define it pointwise. We will show that regardless of which pointwise representation we choose, the results remain true (see \Cref{sec:rectifiable}).

\Cref{theorem:translations} has an immediate corollary:

\begin{corollary}
\label{corollary:circle}
If we remove an arbitrary neighborhood of two diametrically opposite points from a circle, the resulting set can be moved continuously to any other position in the plane in arbitrarily small area.
\end{corollary}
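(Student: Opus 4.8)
The plan is to deduce the corollary from a single application of \Cref{theorem:translations}. Let $C$ be the circle, let $U\subset\R^2$ be an open neighbourhood of two diametrically opposite points $P,-P\in C$, and put $E:=C\setminus U$. Since the circle is invariant under $x\mapsto -x$, the (unoriented) tangent directions of $C$ at $P$ and at $-P$ agree; call this common direction $\theta_0\in\P^1$. Take the classical tangent field of $C$ as the pointwise representative of the tangent field of $E$. Along the circle, arc-length is proportional to the change in tangent direction, so $U$ --- which contains an honest neighbourhood of $\{P,-P\}$ --- guarantees that there is an $\eta>0$ with $\theta_x\notin B(\theta_0,\eta)$ for every $x\in E$. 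In other words, the tangent directions of $E$ omit the open arc $J:=B(\theta_0,\eta)\subset\P^1$.

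Next I would dispose of rotations. Any other position of $E$ in the plane has the form $g(E)$ for an orientation-preserving isometry $g$; writing $c$ for the centre of $C$, factor $g$ as a translation composed with a rotation about $c$. Rotating $E$ about $c$ keeps it inside $C$, a Lebesgue-null set, so that part of the motion is free. Hence it suffices to show that for every $\epsilon>0$ and every $v\in\R^2$ there is a polygonal path $P=\bigcup_i L_i$ from $0$ to $v$ with $\bigl|\bigcup_i\bigcup_{p\in L_i}(p+E)\bigr|<\epsilon$, which exhibits $E$ being translated by $v$ covering area $<\epsilon$.

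To obtain such a path, apply \Cref{theorem:translations} to $E$ with $\epsilon$ replaced by $\epsilon':=\min(\epsilon,\eta)$: it produces a polygonal path $P=\bigcup_i L_i$ from $0$ to $v$ together with directions $\theta_i\in\P^1$ satisfying
\[
\Bigl|\bigcup_i\bigcup_{p\in L_i}\bigl(p+\{x\in E:\theta_x\notin B(\theta_i,\epsilon')\}\bigr)\Bigr|<\epsilon'\le\epsilon .
\]
The point is that each $\theta_i$ may be taken inside $J$ --- indeed within distance $\eta-\epsilon'$ of $\theta_0$ --- so that $B(\theta_i,\epsilon')\subseteq J$ and hence $\{x\in E:\theta_x\in B(\theta_i,\epsilon')\}=\emptyset$. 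Then $\{x\in E:\theta_x\notin B(\theta_i,\epsilon')\}=E$ for every $i$, the set in the displayed inequality is precisely the region swept as $E$ is translated along $P$, and the corollary follows.

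The one delicate point --- and the main obstacle --- is the claim that the directions $\theta_i$ furnished by \Cref{theorem:translations} can be placed in the gap $J$ left by the tangent directions of $E$. This is not literally part of the black-box statement, where the $\theta_i$ are only asserted to exist, so I would either read it off the construction used to prove \Cref{theorem:translations} --- when the tangent field of $E$ avoids $B(\theta_0,\eta)$, the construction never has to delete anything, and the $\theta_i$ can be kept close to $\theta_0$ --- or, more cleanly, record the mild strengthening of \Cref{theorem:translations} in which the $\theta_i$ are permitted to be prescribed inside any fixed open arc disjoint from the $\epsilon$-neighbourhood of the tangent directions of $E$, which is what the proof actually delivers. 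Granting this, everything else is routine.
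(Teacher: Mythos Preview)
Your approach tries to force every $\theta_i$ furnished by \Cref{theorem:translations} to land in the gap $J$, and you rightly flag this as the crux. Unfortunately neither of your proposed fixes goes through without substantial new work. In the proof of \Cref{theorem:translations} the directions $\theta_i$ are pinned down by the requirement $\P^1\setminus B(\theta_i,\eps)\subset I_\ii$ (see \Cref{sec:stopping-time}), and the intervals $I_\ii$ are manufactured from the Venetian-blind parameters $\beta_\ii,\gamma_\ii,\pm_\ii$ and the initial direction $\theta_\emptyset$ alone---they never see $E$. There is no mechanism making the complementary gap $\P^1\setminus I_\ii$ sit inside $J$, so ``reading it off the construction'' does not deliver $\theta_i\in J$. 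Your proposed strengthening would amount to redesigning the Venetian blind so that every terminal $I_\ii$ contains $\P^1\setminus J$; this may well be possible, but it is not the one-line observation you suggest.

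The paper's route is different and really is immediate: it uses the rotational symmetry of the circle \emph{throughout} the motion, not only in the preliminary reduction. Apply \Cref{theorem:translations} with $\eps<\eta$; this gives a polygonal path $P=\bigcup_i L_i$ and directions $\theta_i$, and on each $L_i$ the set that must be deleted is a pair of diametrically opposite arcs $\{x\in C:\theta_x\in B(\theta_i,\eps)\}$ of angular width $<2\eta$. Rotating $E$ about its current centre costs zero area since $E$ stays inside a translate of $C$. Hence, before traversing each $L_i$, rotate $E$ about its centre so that its two missing arcs (each of angular width $\ge 2\eta$) cover the pair of arcs the theorem wants removed; then the whole of $E$ is carried along $L_i$ inside the theorem's area bound. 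Undo the accumulated rotation at the end. This is exactly the equivalence the paper sets up in the paragraph just before \Cref{theorem:translations}: by rotational symmetry, the strong Kakeya property for circular arcs is the same question as translating the full circle while deleting a piecewise-continuously chosen arc.
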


This strengthens the previously known result \cite{hl} that sufficiently short circular arcs have the strong Kakeya property. 

\subsection{Rotations}
We note that \Cref{theorem:translations} does not handle the classical Kakeya needle problem: clearly it is not possible to translate a line segment to every other position in small area. We can still apply \Cref{theorem:translations} with $E$ a line segment, but since every point of $E$ has the same tangent direction, it allows us to delete the entire line segment at every point $p \in P$. To obtain a more meaningful statement for line segments, we need to 
consider what happens if we allow rotations as well as translations. 

\medskip

In order to unify translations and rotations, it is helpful to consider the projective plane $\P^2$.

We can consider a translation in direction $\theta \in \P^1$ to be a ``rotation'' around the infinite point $\theta^\perp \in \P^1 \subset \P^2$ (see \Cref{rott}).

We need to generalize the notion of a polygonal path from a path in $\R^2$ to one in $\Isom^+(\R^2)$, the space of all orientation-preserving isometries of $\R^2$. (This space is also known as the \emph{special Euclidean group} $SE(2)$.) The polygonal path in \Cref{theorem:translations} can be viewed as a sequence of vectors, each indicating in which direction and how far to translate. Then, a polygonal path of rotations should be a sequence of rotations, indicating around which point and how much to rotate.

Specifying a sequence of rotations is slightly trickier than a sequence of translations: when we rotate a set around a point, the centers of all the other rotations move. To avoid this problem, we will find it much more convenient to specify our sequence in the \emph{intrinsic coordinate system}. That is, with $\rho_i$ denoting rotations around $z_i\in\R^2$, our continuous movement will be to rotate first with center $z_1$, then with center $\rho_1(z_2)$, and so on. 

Our \emph{polygonal path} $P$ will be specified by the \emph{intrinsic sequence} $\rho_i$, but it will still lie in the space $\Isom^+(\R^2)$, and its points will be isometries not in the intrinsic but in the standard coordinate system. 

For each sequence $\{\rho_i\}$ we obtain a $P=\bigcup_i L_i$. For each ``line segment'' $L_i$ in $P$, the rotations in $\{p'\circ p^{-1} : p, p' \in L_i\}$ all have the same center. (It is important to remember that this center depends not only on $z_i$ but also on the previous rotations.) 

Also, we find it much more convenient to specify a rotation not by the point that we rotate around, but by the image of this point in the projective plane when we embed $\R^2$ into $\P^2$. We will call this the \emph{projective center} of $\rho$ (both for translations and rotations).

\medskip
First we will prove a preliminary result (see \Cref{theorem:rotations-on-line}). The exact statement is quite technical, but essentially says that instead of using translations, we can move our set $E$ using rotations whose projective centers are almost aligned: if we want to connect $\rho\in\Isom^+(\R^2)$ to the identity map by a polygonal path, we can choose a line $\ell\in(\P^2)^*$ that passes through the center of $\rho$, and choose the (intrinsic) rotations so that their projective centers lie in $B(\ell, \eps)$. We also obtain, for each $i$, a $u_i \in \ell$ such that:
\begin{equation}
\label{eq:throw-out-belt}
|\bigcup_i\bigcup_{p\in L_i}p(\{x\in E:\, \nu_x \cap \ell \cap B(u_i, \eps)=\emptyset\})|<\eps.
\end{equation}

\Cref{theorem:translations} can be viewed as a special case of \Cref{theorem:rotations-on-line} by taking $\rho$ to be a translation and then taking $\ell$ to be $\P^1$. In this case, the centers lie on $\ell$, not just in an $\epsilon$-neighborhood of $\ell$. The reason we need an $\epsilon$-neighborhood for rotations is that, unlike for translations,
the composition of a rotation around  $z_1$ and a rotation around $z_2$ does not equal a rotation around  a point on the line through $z_1,z_2$. (Recall the centers are specified with intrinsic coordinates.) This makes the statement and the proof of \Cref{theorem:rotations-on-line} more complicated than those of \Cref{theorem:translations}. We will need careful error estimates on how far the centers move, and, consequently, how large area the set $E$ covers during its movement.

The essential observation for the error estimates is the following: the composition structure of translations is linear, i.e., given by vector addition. The composition structure of rotations is not linear, but it is ``linear up to a quadratic error,'' using an appropriate parametrization of $\Isom^+(\R^2)$ (see \Cref{lemma:star-approx-add}).

\begin{remark}
\label{remark:classical-kakeya}
Let $E$ be a countable union of parallel line segments which is bounded and has finite total length. It is easy to see that \Cref{theorem:rotations-on-line} implies that we can rotate $E$ inside a set of arbitrarily small area. This strengthens the result of Davies mentioned at the beginning of this introduction, who proved the same result when $E$ is a finite union of parallel line segments \cite{davies}.
\end{remark}

\begin{remark} 
\Cref{theorem:translations} and \Cref{theorem:rotations-on-line} also provide a new insight into the other results mentioned in \Cref{subsection:history}, that the non-trivial connected components of a closed set with the Kakeya property can be covered by parallel lines or by concentric circles \cite{chl}. It turns out that the key property of the line and the circle is that they are homogeneous: by rotating around the center of the circle, any sub-arc can be mapped onto any other sub-arc of the same length, by a continuous movement that covers only zero area. The same is true for lines with shifts. Therefore our piecewise continuous deletion of the line segments/sub-arcs in
\Cref{theorem:translations}/\Cref{theorem:rotations-on-line} can be replaced by a continuous one. No rectifiable set except for the union of parallel lines or concentric circles has this property.
\end{remark}

\begin{remark}\label{unbounded}
The set $E$ in \Cref{theorem:rotations-on-line} needs to be bounded. Take, for example, $E$ to be a union of countably many circles with centers $z_i$ and radius $r_i$, such that $\sum {r_i} < \infty$ and $\sum r_i |z_i| = \infty$. Then it is a rectifiable set with finite $\cH^1$-measure, but every continuous rotation with a fixed center covers infinite area, even with a normal line removed. However, for the limit version \Cref{theorem:besicovitch-rot} (explained below), in which the centers of the rotations no longer need to be piecewise constant along the path, we can drop the boundedness condition.
\end{remark}

\subsection{Besicovitch and Nikodym sets}
In \Cref{section:besi-niko}, we study what happens in the limit as $\eps \to 0$. By taking a sequence of $\eps$ tending to zero, the balls $B(u_i, \eps)$ shrink to a single point in $\P^2$, and the area covered shrinks to zero. We obtain in the limit a continuous movement $P \subset \Isom^+(\R^2)$ such that the set $E$ covers only \emph{zero} area, where at each time moment we only need to delete a subset of $\cH^1$-measure zero (see \Cref{theorem:besicovitch-rot}). The resulting set of zero area is an analogue of a Besicovitch set for $E$. 

Consider, e.g., the special case where there is a line $\ell \in (\P^2)^*$ such that there is a neighborhood of $\ell$ in which no two normal lines of $E$ intersect. Then \Cref{theorem:besicovitch-rot} says that we can rotate $E$ continuously by $360^\circ$, covering a set of zero Lebesgue measure, where at each time moment, we only need to delete \emph{one point}. This happens, e.g., in the special case when $E$ is the graph of a convex function; by  choosing the line $\ell$ to lie below the graph, there is a neighborhood of $\ell$ where no two normal lines meet. 

If $E$ is strictly convex, then we can apply \Cref{theorem:besicovitch-rot} with $\ell=\P^1$ and hence translate $E$ to an arbitrary position in the plane in a set of zero Lebesgue measure, deleting one point at each time moment. 

For moving a circle, we can choose any line $\ell$. In this case we need to delete, at each time moment, not just one but two diametrically opposite points of the circle, since they have the same normal line.

\medskip

By the continuity of $P$, we can construct, from these Besicovitch sets, analogues of Nikodym sets, using the technique outlined at the beginning of this introduction. We state these more precisely in \Cref{section:besi-niko}.

\begin{remark} 
There is not only one continuous $P$, but residually many, in the sense of Baire category (see \Cref{remark:residual}). For results of similar nature when $E$ is a line, see, e.g., \cite{korner} and \cite{cchk}. 
\end{remark}

\begin{remark} It is well-known that there are no sets in $\R^n$ ($n\geq 2$) which have measure zero and contain a circle centered at every point. Stein first proved this for $n \geq 3$ by his estimates on spherical maximal functions \cite{stein}. Bourgain and Marstrand independently showed the same non-existence result holds for $n = 2$ around the same time \cite{bourgain, marstrand}. Bourgain's paper actually treats smooth curves with non-vanishing curvature. More work has been done on such curves, e.g., \cite{mitsis, wolff97, wolff00}.

The non-existence results concern placing a copy of $E$ around every point in $\R^2$. For our Nikodym result, we instead place a copy of $E$ \emph{through} every point of $\R^2$. With this change, such a construction is now possible.
\end{remark}

\begin{remark}
Somewhat surprisingly, ``Besicovitch sets'' for rectifiable sets in $\mathbb R^2$ do not necessarily have dimension $2$. 

Trivially, if $E$ is a countable union of concentric circles, then we can rotate them around their common center without increasing the dimension. More interestingly, there are also other, less trivial examples. For example, if $E$ is a countable union of circles (not necessarily concentric ones), then there is a $1$-dimensional set which contains a rotated copy of $E$ in each direction: since a residual set contains a shifted copy of any countable configuration of points, by putting countably many circles around the points of a $0$-dimensional residual set (of the same sizes as the circles in $E$), we obtain a $1$-dimensional Besicovitch set for $E$.
\end{remark}

\subsection{The sharpness of our results, and dilations}
We do not know whether the sizes of the sets we delete are sharp. While \Cref{theorem:besicovitch-rot} tells us that we only need to delete an $\cH^1$-nullset of $E$ at each time moment, perhaps it is possible to delete much fewer points than specified by the theorem. (For more precise information on the size of the sets we delete, see also \Cref{propo}, \Cref{rempre1}, \Cref{propo-rot} and \Cref{rempre2}.) 

For example, it would be interesting to know whether it is possible to translate a circle in a set of Lebesgue measure zero, deleting only one point at every time moment. This is still an open problem. 

\medskip
Cunningham proved that if we remove an arbitrary neighborhood of one point from a circle, the resulting circular arc can be \emph{shrunken} to a point using translations, rotations, and dilations \cite{cunningham3}. His construction is based on the classical straight line results, and the stereographic projection between the plane and the sphere; it works only for circles.

Motivated by Cunningham's result, in \Cref{sec:dilations}, we include a brief note about what happens for general rectifiable sets when we consider the space $\Sim^+(\R^2)$ of all orientation-preserving similarity transformations of $\R^2$. This space consists of $\Isom^+(\R^2)$ as well as dilations and transformations which rotate and dilate simultaneously, moving points along logarithmic spirals. The techniques in \Cref{sec:rotations} carry over to this setting because the composition structure of $\Sim^+(\R^2)$, like that of $\Isom^+(\R^2)$, is ``linear up to a quadratic error.''

In \eqref{eq:throw-out-belt}, we consider the intersection of the normal lines $\nu_x$ with balls in the projective plane. In the similarity transformations setting, we need to consider instead the intersections with \emph{rotated} normal lines. The angle by which we rotate normal lines depends on the ``pitch angle'' of the logarithmic spirals of the similarity transformations.

As an illustrative example, in \Cref{subsection:circles} we present the following application. (Compare this with \Cref{corollary:circle}.)

\begin{corollary}
\label{corollary:circles-dilation}
Any circular arc which is not the full circle can be moved continuously to any other position in the plane (of the same size) in arbitrarily small area via similarity transformations, such that the size of the circular arc always remains arbitrarily close its initial size.
\end{corollary}

We also obtain a Nikodym set for circles, i.e., a set in the plane of Lebesgue measure zero which contains a punctured circle through every point. 

\begin{corollary}
\label{corollary:nikodym-circles-dilations}
There exists a set $A \subset \R^2$ of Lebesgue measure zero such that for each $x \in \R^2$, there is a circle $C$ such that $x\in C$ and $C\setminus A$ has at most one point.
\end{corollary}

\section{Main ideas of the proof of Theorems \ref{theorem:translations} and
\ref{theorem:rotations-on-line}}

Our proof of \Cref{theorem:translations} 
relies on two key ideas. 

\subsection{The first key idea}

Our first key idea is the ``small neighborhood lemma'': suppose we move a compact set $E \subset \R^2$ along a path of isometries $P \subset \Isom^+(\R^2)$. If we perturb $P$ by a small amount, the area covered by the perturbed movement will not increase very much because the new region covered is contained in a small neighborhood of the original. This simple and obvious fact turns out to be extremely useful. %

\begin{lemma}[Small neighborhood lemma]
\label{lemma:small-nbhd-small}
Let $E \subset \R^2$ be any compact set, and let $P$ be an arbitrary path in $\Isom^+(\R^2)$. Then for every $\epsilon > 0$ there exists a neighborhood $U \subset \Isom^+(\R^2)$ of $P$ such that
\begin{equation}
|\bigcup_{p \in U} p(E)| \leq |\bigcup_{p \in P} p(E)| + \epsilon.
\end{equation}
\end{lemma}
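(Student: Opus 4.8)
The plan is to exploit compactness at two levels: $E$ is compact in $\R^2$, and the path $P$ (or a suitable subset of it) can be treated via a compactness/continuity argument in $\Isom^+(\R^2)$. The key observation is that the map $\Phi\colon \Isom^+(\R^2)\times E \to \R^2$, $(p,x)\mapsto p(x)$, is continuous, and that $\bigcup_{p\in P}p(E) = \Phi(P\times E)$. Since we only need an upper bound on the \emph{measure} of the covered set after a small perturbation, and measure is outer-regular, I would first enclose $K := \cl\bigl(\bigcup_{p\in P}p(E)\bigr)$ — or rather the slightly larger compact set obtained by also closing $P$, if $P$ is not already closed — inside an open set $V$ with $|V| \le |\bigcup_{p\in P}p(E)| + \epsilon$. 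Then it suffices to produce a neighborhood $U$ of $P$ in $\Isom^+(\R^2)$ with $\bigcup_{p\in U}p(E)\subset V$, because then $|\bigcup_{p\in U}p(E)|\le |V|\le |\bigcup_{p\in P}p(E)|+\epsilon$.

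To build such a $U$, I would use continuity and compactness directly. For each $p\in\cl P$ and each $x\in E$ we have $p(x)\in \bigcup_{q\in P}q(E)\subset V$ (taking closures appropriately so points of $\cl P$ still land in $\cl(\bigcup_{p\in P}p(E))$, which I can assume is inside $V$ by first choosing $V$ to contain that closure). By continuity of $\Phi$ and openness of $V$, around each $(p,x)$ there is a product neighborhood $U_{p,x}\times W_{p,x}$ mapping into $V$. Fixing $p$ and letting $x$ range over the compact set $E$, finitely many $W_{p,x_1},\dots,W_{p,x_k}$ cover $E$; intersecting the corresponding $U_{p,x_j}$ gives a single neighborhood $U_p$ of $p$ with $p'(E)\subset V$ for all $p'\in U_p$. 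If $\cl P$ is compact (which I would arrange — either $P$ is already compact as a "path", or I replace it by its closure, noting $\bigcup_{p\in\cl P}p(E)$ has the same closure hence fits in $V$), finitely many $U_{p_1},\dots,U_{p_m}$ cover $\cl P$, and $U := \bigcup_j U_{p_j}$ is the desired open neighborhood of $P$.

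There is one genuine wrinkle I should address rather than gloss over: the lemma as stated allows $P$ to be an \emph{arbitrary} path, and a path need not be closed, nor need $\bigcup_{p\in P}p(E)$ be measurable a priori. The set $\bigcup_{p\in P}p(E)$ is a continuous image of $P\times E$; if $P$ is, say, $\sigma$-compact (e.g. a locally compact subset, or in our applications a finite union of arcs), this is an $F_\sigma$ set hence Borel, so "$|\cdot|$" makes sense as Lebesgue measure. In the applications $P$ will be a finite polygonal path, so this is not an issue, and I would simply remark that $P$ is assumed to be such that the left-hand side is well-defined (or work with Lebesgue outer measure throughout, which only helps since outer measure is monotone and the containment $\bigcup_{p\in U}p(E)\subset V$ still gives the bound). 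The compactness of $\cl P$ is likewise automatic in all cases of interest; if one wanted the fully general statement one would localize along $P$ and take a countable union of neighborhoods with geometrically decreasing error contributions $\epsilon/2^j$.

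The main obstacle is therefore not any hard estimate — the argument is pure point-set topology — but rather bookkeeping about closures and measurability: making sure that passing from $P$ to $\cl P$ does not change the covered set in a way that breaks the bound (it does not, since $\bigcup_{p\in\cl P}p(E)\subset\cl\bigl(\bigcup_{p\in P}p(E)\bigr)$ because $x\mapsto p(x)$ is continuous and $E$ is compact, so a limit of $p_n(x_n)$ with $p_n\to p$ lies in $p(E)$ only after also passing to a convergent subsequence of $x_n$ — this is exactly where compactness of $E$ is used), and then choosing the open set $V$ to sandwich that closure with only $\epsilon$ extra measure. Once those two points are handled, the finite-subcover extraction completes the proof.
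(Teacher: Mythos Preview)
Your argument is correct. The paper does not actually prove this lemma; it introduces it as a ``simple and obvious fact'' and immediately moves on. Your outer-regularity plus compactness argument is exactly the standard way to justify it, and all the steps go through.

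One minor simplification: in the paper a \emph{path} is always a continuous image of a compact interval (the applications use polygonal paths, and later the limit curves $P$ are uniform limits of such), so $P$ is already compact. Hence $\bigcup_{p\in P}p(E)=\Phi(P\times E)$ is compact from the start, and you can skip the discussion of $\cl P$ versus $P$ and of measurability. With that, the argument reduces cleanly to: choose open $V\supset\bigcup_{p\in P}p(E)$ with $|V|\le|\bigcup_{p\in P}p(E)|+\epsilon$ by outer regularity, then set $U=\{p:p(E)\subset V\}$, which is open (compactness of $E$, continuity of the action) and contains $P$.
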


(In this paper, a path is the image of a continuous map on a compact interval.)

\subsection{The second key idea}

The second key idea is more technical, so we give only an informal presentation here and defer the precise details to \Cref{sec:second-key-idea-lines} and \Cref{sec:second-key-idea-rot}. First, note that: 

\begin{lemma}\label{lemma:luzin}
For any polygonal path $P\subset \R^2$ and for an arbitrary $E\subset\R^2$, if we translate $E$ along $P$, then the area covered is 
$\lesssim\cH^1(E)\cH^1(P)$. 
\end{lemma}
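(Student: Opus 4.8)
When $E$ is translated along $P$, the covered set is the Minkowski sum $E+P:=\{x+v:\ x\in E,\ v\in P\}$, so the plan is to bound its (outer) Lebesgue measure by covering $E$ by small sets and controlling the thin region each piece sweeps out along $P$. We may assume $\cH^1(E)<\infty$, the case $\cH^1(E)=\infty$ being vacuous. By an elementary subdivision of the finitely many line segments whose union is $P$, we may further assume these segments $L_1,\dots,L_m$ have pairwise disjoint interiors, so that $\sum_{i=1}^m\cH^1(L_i)=\cH^1(P)$; and since $E+P=\bigcup_{i=1}^m(E+L_i)$, it suffices to estimate $\sum_{i}|E+L_i|$.

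For the main estimate, fix $\delta,\epsilon_0>0$ and use the definition of $\cH^1$ to pick a cover $E\subseteq\bigcup_j U_j$ with $d_j:=\operatorname{diam}U_j\le\delta$ and $\sum_j d_j\le\cH^1(E)+\epsilon_0$. Choosing $x_j\in U_j$ gives $U_j\subseteq\cl B(x_j,d_j)$, hence $E+L_i\subseteq\bigcup_j\bigl(\cl B(x_j,d_j)+L_i\bigr)$, where each $\cl B(x_j,d_j)+L_i$ is the closed $d_j$-neighborhood of a translate of the segment $L_i$ — a rectangle of dimensions $\cH^1(L_i)\times 2d_j$ with two half-disc caps of radius $d_j$ — of area at most $2\cH^1(L_i)\,d_j+\pi d_j^2$. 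Summing over $j$, then over $i$, and using $d_j\le\delta$ so that $\sum_j d_j^2\le\delta\sum_j d_j$, gives
\begin{equation*}
|E+P|\ \le\ \sum_{i=1}^m\sum_j\bigl(2\cH^1(L_i)\,d_j+\pi d_j^2\bigr)\ \le\ \bigl(\cH^1(E)+\epsilon_0\bigr)\bigl(2\cH^1(P)+m\pi\delta\bigr).
\end{equation*}
Letting $\delta\to0$ with $m$, $\cH^1(P)$, $\epsilon_0$ held fixed, and then $\epsilon_0\to0$, yields $|E+P|\le 2\,\cH^1(E)\,\cH^1(P)$, which is the asserted bound (indeed with explicit constant $2$).

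There is no genuine difficulty here; the only point that needs a little care is the ``end-cap'' error $\pi d_j^2$, of which there are as many terms as cover pieces of $E$, a number we do not control — the estimate works precisely because we are free to drive $\delta$ to $0$ while the path's parameters $m$ and $\cH^1(P)$ stay fixed, which forces $\sum_j d_j^2\le\delta\sum_j d_j\to0$. Two cosmetic remarks: the covering set $\bigcup_{i,j}\bigl(\cl B(x_j,d_j)+L_i\bigr)$ is a countable union of compact sets, hence Lebesgue measurable, so the bound is legitimate even though $E+P$ itself need not be; and the area count for the neighborhood of a segment uses that $\cH^1$ is normalized by diameters, so that a segment of length $\ell$ has a closed $d_j$-neighborhood of area exactly $2\ell d_j+\pi d_j^2$.
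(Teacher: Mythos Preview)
Your proof is correct and follows essentially the same approach as the paper: cover $E$ by small balls, bound the tube each ball sweeps along each segment of $P$, and sum. The only difference is in bookkeeping the end-cap error $\pi d_j^2$: the paper simply takes the ball radii smaller than every segment length so that $\pi r^2\le\pi r\,\cH^1(L)$ and the caps are absorbed into the implicit constant, whereas you send $\delta\to 0$ with $m$ fixed and extract the explicit constant $2$.
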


\begin{proof}
If $B$ is a ball of radius $r$, where $r$ is smaller than the line segments in the polygonal path, then for each line segment $L\subset P$, by translating $B$ along $L$ we cover a set of area $\lesssim r\cH^1(L)$. Adding these up for all line segments $L$ and approximating $E$ by a union of small balls, we obtain \Cref{lemma:luzin}.
\end{proof}

\begin{remark}\label{remark:luzinrem} \Cref{lemma:luzin} shows that in our proof of \Cref{theorem:translations} we can ignore small subsets of $E$, since in the movements these will cover only small area. Also, we can ignore small subsets of $P$.
\end{remark}
 
Our second key idea is the simple observation that the estimate in \Cref{lemma:luzin} can be improved if we also take into account the directions of the tangents of $E$. For simplicity, suppose that $E$ is a $C^1$ curve. Then we can cover $E$ with thin rectangles that approximate the curve. Each thin rectangle $R$ has the property that translating $E \cap R$ along a line segment $L$ in the direction of the long side of $R$ covers area $\lesssim \delta\cH^1(L)$, where $\delta$ is the length of the short side of the rectangle. If the rectangle is thin enough, then this is a much better estimate than the estimate $\cH^1(E\cap R)\cH^1(L)$ that we obtain from \Cref{lemma:luzin}.

\begin{remark} 
For general rectifiable sets $E$, instead of thin rectangles, we will choose $R\subset E$ such that $\theta_x$ is almost constant on $R$. The key idea remains the same (see \Cref{sec:second-key-idea-lines}).
\end{remark}

\subsection{Combining the key ideas}

We combine these two ``key ideas'' to construct polygonal paths in a Venetian blind-type construction. (For Venetian blinds, see, e.g., \cite[Theorem 6.9]{falconer2014} or \cite[Lemma 11.8]{mattila15}.) Again, we give an informal presentation. See \Cref{sec:translations} for the precise details.

The method is as follows. Suppose that $E$ is a $C^1$ curve, which we cover by thin rectangles. Suppose our initial path is a translation along a horizontal segment. Let $R, R'$ be two rectangles from our cover and let $\theta, \theta'$ be the directions of their long sides, with $\theta \neq \theta'$.

\begin{enumerate}
\item
First, we replace our horizontal segment by a zigzag so that every other segment has direction $\theta$. Then $R \cap E$ will cover small area when translated along these segments.
\item
  Now we repeat the previous step, replacing each segment in direction $\theta$ with a new zigzag such that every other segment has direction $\theta'$. Then $R' \cap E$ will cover small area when translated along the segments in direction $\theta'$. Furthermore, if we make these new zigzags sufficiently ``fine'' (many turns and small enough segments), then these zigzags will remain close to the segments of direction $\theta$ that we just replaced. Then by the small neighborhood lemma, $R \cap E$ also covers small area when moved along the segments in direction $\theta'$.
\end{enumerate}

By the end of step (2), we now have a ``Venetian blind.'' The line segments in direction $\theta'$ are the ``good'' segments, because translations along these segments cover small area for both rectangles $R$ and $R'$. By iterating with more angles, we can increase the number of rectangles for which translations along the good segments cover small area. 

We also need the total length of the remaining ``bad'' segments to be strictly smaller than the initial segment, so that the size of the bad segments tends to zero when we iterate the Venetian blind construction. (For this reason, we cannot deviate too far from the initial horizontal direction. This leads to condition \eqref{rat}.) After sufficiently many iterations, we can ignore the bad segments by \Cref{remark:luzinrem}.

\medskip
The main ideas of the proof of  \Cref{theorem:rotations-on-line}, where we use rotations, are similar. As in the proof for translations, we combine the small neighborhood lemma with the covering of $E$ by sets $R$ such that  rotating $R$ around an appropriate point $z$ covers only a small area.
We still use a Venetian blind construction, but now our zigzags will be in $\Isom^+(\R^2)$. The general ideas of the argument are the same, but, as we explained in the introduction, they will require more delicate estimates than for translations.

\section{Preliminaries}

\subsection{Tangents of rectifiable sets}
\label{sec:rectifiable}

Recall that a set $E\subset\R^2$ is called \emph{rectifiable} if $\cH^1$-a.e.\ point of $E$ can be covered by countably many $C^1$ curves. For any two $C^1$ curves, their tangent directions agree at $\cH^1$-a.e. point of their intersection. Therefore, there exists a tangent field to $E$, i.e., a map $x \mapsto \theta_x$ from $E$ to $\P^1$ such that for any $C^1$ curve $\Gamma$, the tangent direction to $\Gamma$ at $x$ agrees with $\theta_x$ for $\cH^1$-a.e.\ $x \in \Gamma \cap E$. This gives one of the (many equivalent) descriptions of a tangent field of a rectifiable set. 

Of course, the tangent field is uniquely defined only up to an $\cH^1$-null subset of $E$. That is, if we change the tangent field along a set that meets each $C^1$ curve in a set of $\cH^1$-measure zero, it is still a tangent field.

In order to prove \Cref{theorem:translations} and also our other results, we fix a particular tangent field $x \mapsto \tilde\theta_x$ on $E$ as follows:
first we fix a subset $E'\subset E$ of full $\cH^1$-measure and a cover of $E'$
by countably many $C^1$ curves $\{ \Gamma_i \}$. Next, for each $x \in E$, if all the curves $\Gamma_i$ that go through $x$ have the same tangent direction at that point, then we let $\tilde\theta_x \in \P^1$ be that direction. (This also defines the normal line $\tilde\nu_x$ at $x$.)

Consider the set of those $x \in E$ where our $\tilde\theta_x,\tilde\nu_x$ either (1) are not defined, or (2) are defined but do not agree with the $\theta_x$, $\nu_x$ from the statements of our theorems. This is a set of zero $\cH^1$-measure; hence we can ignore it by \Cref{remark:luzinrem} when we work with translations, and we will be able to ignore it by \Cref{lemma:luzin-rot} (below) when we work with rotations. Hence, for the remainder of this paper, we may assume that $\theta_x$ is the particular tangent field $\tilde\theta_x$ from the previous paragraph (and make the analogous assumption for $\nu_x$).

\subsection{Rotations}\label{rott}

We denote by $\Isom^+(\R^2)$ the space of all orientation preserving isometries of $\R^2$. Each element of $\Isom^+(\R^2)$ is either a translation by a vector $v$, or a rotation around a point $z\in\R^2$ by angle $\phi$. Using complex notation, such a rotation is the map $u\mapsto e^{i\phi}(u-z)+z$. The image of $0$ under this mapping is $z(1-e^{i\phi})$, so it is natural to denote 
\begin{equation}\label{v}
v:=z(1-e^{i\phi}).
\end{equation} 
We can see from \eqref{v} that $v=z(-i\phi+O(\phi^2))$. We denote 
\begin{equation}
w=\begin{cases}
z\phi & \phi\neq 0\\
      iv& \phi=0.\\
\end{cases}
\end{equation}
The motivation behind our notation is that, for small $\phi$ and near the origin, the rotation acts, to first order, like translation by $-iw$. 

Both translations and rotations can now be specified by an ordered pair $(w, \phi) \in \R^2 \times \R$. (We have $\phi=0$ for translations.) From now on, we will refer to translations as rotations as well.

For $\rho\neq\id$, we define the \emph{projective center} of $\rho$ to be the image of $(w, \phi)$ under the quotient map $\R^3 \setminus \{0\}\to\P^2$. We still use $(w, \phi)$ to denote the image in $\P^2$. If $\phi \neq 0$, then, using homogeneous coordinates, this reduces to $(z\phi,\phi)=(z, 1)$, as expected. If $\phi = 0$, then $(w, 0) = (iv, 0)=(v^\perp,0)$, which is indeed the point at infinity orthogonal to the direction of $v$. 

\begin{remark}
Even though we now view translations as rotations around infinite points, translations and rotations are still different, even when viewed in $\P^2$. For example, a rotation with angle $\phi\neq 0$ fixes just one point in $\P^2$ (its projective center) whereas a translation fixes an entire line ($\P^1\subset\P^2$).
\end{remark}

We will use the notation $\rho(w,\phi)$ for a rotation whose projective center is $(w,\phi)\in\P^2$ and whose angle is $\phi$.
That is, we assign to each point $x=(x_1,x_2,x_3)\in \R^3\setminus\{0\}$ the rotation $\rho(x)$ whose:
\begin{itemize}
\item projective center is the image of $(x_1,x_2,x_3)$ under the projection $\R^3\setminus\{0\} \to\P^2$;
\item angle is the last coordinate $x_3$.
\end{itemize}

\begin{remark} 
We will use the same notation $\rho=\rho(w,\phi)$ for the mapping $\rho : \R^2\to \R^2$ and for the continuous movement that rotates $\R^2$ around a point. For example, if $\phi=2\pi$ then the former is the identity mapping and the latter is not. %
It will be always clear from the context which one we mean.
\end{remark}

\subsection{The ``second key idea'' for translations}
\label{sec:second-key-idea-lines}

We fix a small $\delta>0$, and a direction $\theta$. Let $R$ be a subset of $E$ such that $|\theta_x-\theta|\lesssim\delta$ for every $x\in R$. Our aim is to estimate how large area we cover if we translate $R$ by a vector $v$ of direction $\theta$.

For each $x\in R$ there is a $C^1$ curve $\Gamma_i$ from \Cref{sec:rectifiable} that goes through the point $x$.
We choose a decomposition $R=\bigcup R_i$ such that $R_i\subset\Gamma_i$ for each $i$.  Then locally, i.e., in a neighborhood of $x\in R_i$, $\Gamma_i$ is the graph of a Lipschitz function $f$ in the $(\theta,\theta^\perp)$ coordinate system, with Lipschitz constant $\lesssim\delta$. Without loss of generality we can assume that $\theta=0$. 

Now, when we translate $R_i$ by the horizontal vector $v$, for each fixed $t\in\R$ we obtain $\#\{x\in\R:\,f(x)=t,\,(x,f(x))\in R_i\}$ many (not necessary disjoint) horizontal line segments on the line $y=t$, each of length $|v|$.
Therefore, by Fubini's theorem, the area covered is at most
\begin{equation}\label{eq:fubini-bound}
|v|\int \#\{x\in\R:\,f(x)=t,\,(x,f(x))\in R_i\}\,dt.
\end{equation}

Next, recall that the coarea formula (e.g., \cite[Theorem 3.2.22]{federer}) implies that for any measurable function $g: \R \to \R$, for any Lipschitz $h: \R \to \R$, and for any measurable $S \subset \R$,  
\begin{equation}\label{eq:coarea-formula}
\int_{\R} g(t) \#\{x \in S: h(x) = t\} \, dt
=
\int_S g(h(x)) |h'(x)| \, dx
.
\end{equation}

Using \eqref{eq:coarea-formula}, we can bound \eqref{eq:fubini-bound} by
\[
\lesssim \delta\cH^1(R_i)|v|
\]
since $f$ has Lipschitz constant $\lesssim \delta$. Summing over $i$, we obtain the following:
\begin{lemma}\label{lemma:variation}
Let $\delta>0$ be sufficiently small, and let $\theta$ be an arbitrary direction. Let $R$ be a subset of $E$ such that $|\theta_x-\theta|\lesssim \delta$ for every $x\in R$. Then if we translate $R$ by a vector $v$ of direction $\theta$, the total area covered is $\lesssim \delta\cH^1(R)|v|$. 
\end{lemma}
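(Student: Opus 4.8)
The plan is to reduce the two-dimensional "area swept" estimate to a one-dimensional length estimate for a single $C^1$ graph, with the saving factor $\delta$ extracted from the co-area formula. First I would invoke the fixed cover of $E$ (up to an $\cH^1$-null set) by countably many $C^1$ curves $\{\Gamma_i\}$ from \Cref{sec:rectifiable}. Discarding from $R$ the $\cH^1$-null set on which our fixed pointwise tangent representative fails to agree with the genuine tangent of the $\Gamma_i$ through the point is harmless, since an $\cH^1$-null set covers zero area when translated by $v$ (cf.\ \Cref{lemma:luzin} with $P$ a single segment). I may thus assume $R\subset\bigcup_i\Gamma_i$ and fix a Borel partition $R=\bigsqcup_i R_i$ with $R_i\subset\Gamma_i$. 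Because the region swept by $R$ along $v$ is contained in the union of the regions swept by the $R_i$, and because $\sum_i\cH^1(R_i)=\cH^1(R)$, it suffices to prove the bound $\lesssim\delta\,\cH^1(R_i)\,|v|$ for each fixed $i$; after a rotation of coordinates I may take $\theta=0$, so $v$ is horizontal.

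Next, for a fixed $i$, I would further decompose $R_i$ into countably many relatively open pieces, on each of which $\Gamma_i$ is the graph $\{(x,f(x))\}$ of a Lipschitz function $f$ in the $(x,y)=(\theta,\theta^\perp)$ coordinates; since $\Gamma_i$ is $C^1$ and its tangent at every (remaining) point of $R_i$ is within $\lesssim\delta$ of horizontal, $f$ has Lipschitz constant $\lesssim\delta$ there. Translating such a piece by the horizontal vector $v$ produces, on each horizontal line $\{y=t\}$, at most $N(t):=\#\{x:\ f(x)=t,\ (x,f(x))\in R_i\}$ segments, each of length $|v|$, so by Fubini the area swept is
\[
\le |v|\int_{\R} N(t)\,dt .
\]
By the co-area formula \cite[Theorem 3.2.22]{federer}, $\int_{\R} N(t)\,dt=\int |f'(x)|\,dx\lesssim\delta\cdot(\text{measure of the }x\text{-domain})\le\delta\,\cH^1(R_i)$, using $\cH^1(R_i)=\int\sqrt{1+f'(x)^2}\,dx\ge(\text{measure of the }x\text{-domain})$. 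Summing over the countably many pieces and then over $i$ yields the claimed estimate.

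The main obstacle here is not a single hard inequality but the bookkeeping: turning the \emph{local} Lipschitz-graph description into a clean \emph{global} countable Borel decomposition of each $R_i$, so that the sweeping regions are measurable (or else phrasing the conclusion with outer measure throughout) and so that $\sum\cH^1(R_i)$ really telescopes to $\cH^1(R)$. One must also verify carefully that the hypothesis $|\theta_x-\theta|\lesssim\delta$ does transfer to a Lipschitz-constant bound $\lesssim\delta$ on the chosen coordinate patches — this is where the reduction to the genuine tangent field (rather than an arbitrary pointwise representative) in the previous paragraph is used. Once these measure-theoretic reductions are in place, the co-area computation giving the factor $\delta$ is routine.
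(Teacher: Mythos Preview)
Your proposal is correct and follows essentially the same route as the paper: decompose $R$ along the fixed $C^1$ cover, pass to local Lipschitz graphs in the $(\theta,\theta^\perp)$ frame with slope $\lesssim\delta$, and combine Fubini with the co-area formula to extract the factor $\delta$. Your write-up is in fact slightly more explicit about the co-area step and the measure-theoretic bookkeeping than the paper's proof.
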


Note that $|\theta_x-\theta|\lesssim \delta$ if and only if $\nu_x$ meets a $\lesssim \delta$-neighborhood of $\theta^\perp$ in $\P^1$. Using this observation, we generalize \Cref{lemma:variation} to rotations in the next section.

\subsection{The ``second key idea'' for rotations}
\label{sec:second-key-idea-rot}

Let $z\in\R^2$, and let $\phi$ be an arbitrary angle. If we rotate the set $E$ around the center $z$ by angle $\phi$, then each point $x\in E$ moves along a circular arc of length $|x-z||\phi|$. Therefore, the trivial estimate we get is that by rotating $E$, the area covered is
\begin{equation}\label{trivirot}
\leq|\phi|\int_\R r\,\#\{x\in E:\,|x-z|=r\}\,dr\le |\phi|\int_E |x-z|\,d\,\cH^1(x).
\end{equation}
The first inequality follows from Fubini's theorem. The second follows from the coarea formula \eqref{eq:coarea-formula} and the fact that if we parametrize the curve $\Gamma_i$ by arc-length, the mapping $t\mapsto |x(t)-z|$ is Lipschitz, with Lipschitz constant at most 1.

For a general rectifiable set, the right-hand side of \eqref{trivirot} can be infinite (cf.\ \Cref{unbounded}). From now on, in this section we assume that $E$ is bounded. More precisely, we assume that $E\subset B(0,r)\subset\R^2$ (here, we used the Euclidean metric). We will show that there is a constant $c$ (that depends only on $r$) such that the following two lemmas hold.

\begin{lemma}\label{lemma:luzin-rot}
Let $y = (w, \phi) \in \R^2 \times \R$, let $\rho=\rho(y)$ be a rotation, and let $R\subset E$ be arbitrary. Then, if we rotate $R$ by $\rho$, the area covered is 
$\lesssim c \cH^1(R)|y|$.
\end{lemma}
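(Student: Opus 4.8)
The plan is to reduce everything to the two estimates already available: \Cref{lemma:luzin} for the translation case and the trivial bound \eqref{trivirot} for the genuine-rotation case. Write $y=(w,\phi)\in\R^3\setminus\{0\}$, so that $\rho=\rho(y)$ has angle $\phi$, and recall that ``the area covered'' refers to the region swept out by the continuous movement from angle $0$ to angle $\phi$.

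First I would treat the case $\phi=0$. Then $\rho$ is the translation by the vector $v$ with $w=iv$, hence $|y|=|w|=|v|$. The continuous movement is a translation along a single segment of length $|v|$, so \Cref{lemma:luzin} gives that the area covered is $\lesssim\cH^1(R)\cH^1(P)=\cH^1(R)|v|=\cH^1(R)|y|$. Since we are free to take $c\ge 1$, this settles the case $\phi=0$.

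Next, suppose $\phi\neq0$, and let $z=w/\phi\in\R^2$ be the Euclidean center of $\rho$, so that $|y|^2=|w|^2+\phi^2=\phi^2(|z|^2+1)$, i.e.\ $|y|=|\phi|\sqrt{|z|^2+1}$. Applying \eqref{trivirot} with $R$ in place of $E$ (the region swept by $R$ is contained in the one swept by $E$), the area covered is at most
\[
|\phi|\int_R|x-z|\,d\cH^1(x)\;\le\;|\phi|\,\cH^1(R)\sup_{x\in R}|x-z|.
\]
Now $R\subset E\subset B(0,r)$ forces $|x-z|\le |x|+|z|< r+|z|$ for every $x\in R$, and since $1\le\sqrt{|z|^2+1}$ and $|z|\le\sqrt{|z|^2+1}$ we get $r+|z|\le(r+1)\sqrt{|z|^2+1}$. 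Hence the area covered is $\le(r+1)\,\cH^1(R)\,|y|$, and taking $c=r+1$ (which depends only on $r$ and is $\ge1$) finishes both cases simultaneously.

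I do not expect any genuine obstacle here: the statement is essentially a repackaging of \Cref{lemma:luzin} and \eqref{trivirot}. The only point requiring a little care is converting the crude geometric bound $|\phi|\,\cH^1(R)\sup_{x\in R}|x-z|$ into a bound in terms of the $\R^3$-norm $|y|$ of the homogeneous coordinate of $\rho$; this is precisely where the boundedness hypothesis $E\subset B(0,r)$ enters and why $c$ must be allowed to depend on $r$.
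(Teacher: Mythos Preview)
Your proof is correct and follows essentially the same route as the paper: split into the translation case (handled by \Cref{lemma:luzin}) and the genuine-rotation case (handled by \eqref{trivirot} together with $|x-z|\le r+|z|\le c\sqrt{1+|z|^2}$ and $|y|=|\phi|\sqrt{1+|z|^2}$). The only cosmetic difference is that you make the constant explicit as $c=r+1$, whereas the paper leaves it as an unspecified $c_1$ depending on $r$.
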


\begin{lemma}\label{var2}
Let $\delta>0$ be sufficiently small (depending on $r$).
Let $y = (w, \phi) \in \R^2 \times \R$ and let $\rho=\rho(y)$ be a rotation with projective center $z$. Let $R\subset E$ be such that, for each $x\in R$, $\nu_x\cap B(z,\delta)\neq \emptyset$. (Here, the ball $B(z, \delta)$ is defined with respect to the metric on $\P^2$.) Then, when we rotate $R$ by $\rho$, the area covered is 
$$\lesssim c\delta\cH^1(R)|y|.$$
\end{lemma}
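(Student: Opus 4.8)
The plan is to reduce the rotation case to the translation case (\Cref{lemma:variation}) by working locally and using that, near a point $x$, a rotation around a center $z$ looks to first order like a translation in a direction that depends on $x-z$. Concretely, just as in \Cref{sec:second-key-idea-lines}, I would decompose $R = \bigcup R_i$ with $R_i \subset \Gamma_i$, where the $\Gamma_i$ are the fixed $C^1$ curves from \Cref{sec:rectifiable}, and further subdivide each $R_i$ into pieces small enough that the curve is nearly straight on each piece and the direction of $x - z$ (for the finite-point interpretation of the projective center $z$) is nearly constant on each piece. The hypothesis $\nu_x \cap B(z,\delta) \neq \emptyset$ is exactly the analogue of the condition $|\theta_x - \theta| \lesssim \delta$ from \Cref{lemma:variation}: it says that the tangent direction $\theta_x$ of $E$ at $x$ is within $\lesssim\delta$ (the implied constant depending on $r$, since we must convert between the metric on $\P^2$ and angles in $\R^2$, and the relevant points of $E$ lie in $B(0,r)$) of the direction of the circular arc traced out by $x$ under the rotation $\rho$ — i.e., the direction perpendicular to $x - z$.

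The key computation is then the co-area estimate. Rotating $R_i$ around $z$ by angle $\phi$, parametrize $\Gamma_i$ by arc length and push forward; the image is foliated by circular arcs of radius $|x-z| \le 2r$ centered at $z$, each of arc-length $|x-z||\phi| \le |y|$ (using $|y| \gtrsim |\phi|$ for bounded $E$, up to the constant $c$). By Fubini in polar coordinates around $z$, the area covered is $\lesssim |y| \int_{\R} \#\{ x \in R_i : |x - z| = s\}\, ds$, and the co-area formula bounds this integral by $\mathrm{Lip}(s \mapsto |x(t) - z|)\cdot \cH^1(R_i)$. On a piece where $\theta_x$ is within $\lesssim \delta$ of the arc direction $(x-z)^\perp$, the function $t \mapsto |x(t) - z|$ has derivative $\lesssim \delta$ in absolute value (its derivative is the cosine of the angle between the tangent and the radial direction, which is small precisely when the tangent is nearly perpendicular to the radius). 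Summing over the pieces and over $i$ gives area $\lesssim c\delta\, \cH^1(R)\, |y|$. I should also invoke \Cref{lemma:luzin-rot} to absorb the genuinely null exceptional set of \Cref{sec:rectifiable} where the pointwise tangent may disagree, and to handle the pieces near $z$ itself if any (though the radial-derivative bound is uniform, so this is not really an issue).

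The main obstacle I expect is bookkeeping the constants: relating $B(z,\delta)$ in the $\P^2$-metric to an honest angular bound on $\theta_x$ in $\R^2$, and controlling the error when $z$ is the projective center of a genuine rotation ($\phi \neq 0$, so $z = w/\phi \in \R^2$) versus a translation ($\phi = 0$, $z \in \P^1$). In the translation case the statement degenerates to \Cref{lemma:variation} directly; in the rotation case one must be careful that $|x - z|$ stays bounded by $\lesssim r$ only if $z$ is not too far from $B(0,r)$ — but if $z$ is very far away the rotation is nearly a translation, and $B(z,\delta)$ in the $\P^2$-metric again forces $\theta_x$ close to a fixed direction, so the same co-area bound applies with the radial-derivative interpretation replaced by the translation estimate. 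Making this dichotomy clean, with a single constant $c = c(r)$, is the delicate part; the rest is the routine co-area argument already rehearsed for translations.
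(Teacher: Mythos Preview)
Your overall strategy---co-area for the radial function $t\mapsto |x(t)-z|$, combined with a comparison of the $\P^2$-metric ball $B(z,\delta)$ to an angular condition---is the right one, and is in fact what the paper does. But there is a genuine gap in the way you combine the two estimates.

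You first bound each arc-length $|x-z|\,|\phi|$ uniformly by $c|y|$ and then try to bound the Lipschitz constant of $t\mapsto|x(t)-z|$ (the ``radial derivative'') by $\lesssim\delta$. The second bound is \emph{false} for points $x$ close to $z$. Indeed, if $|x-z|\lesssim\delta$ then $x$ itself lies in (a set comparable to) $B(z,\delta)$, so the hypothesis $\nu_x\cap B(z,\delta)\neq\emptyset$ is satisfied automatically, for \emph{any} tangent direction $\theta_x$; the radial derivative can then be as large as $1$. A concrete example: $z=0$, $R$ a radial segment $\{(t,0):0\le t\le\delta\}$. Every $\nu_x$ meets $B(0,\delta)$, yet $t\mapsto|x(t)|=t$ has derivative $1$. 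Your bound gives area $\lesssim c|y|\cdot 1\cdot\cH^1(R)=c\delta|y|$, while the correct (and sharp) bound is $c\delta^2|y|=c\delta|y|\cH^1(R)$. Your parenthetical ``the radial-derivative bound is uniform, so this is not really an issue'' is exactly the wrong diagnosis, and invoking \Cref{lemma:luzin-rot} on the near-$z$ piece does not help either (in the example all of $R$ is near $z$).

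The fix is small but essential, and it is what the paper does: do \emph{not} separate the arc-length factor from the derivative. Keep the weight $s=|x-z|$ in the polar Fubini integral, so that the area is $\le |\phi|\int_0^\infty s\,\#\{|x-z|=s\}\,ds$; then co-area turns this into $|\phi|\int_R \dist(\nu_x,z)\,d\cH^1(x)$, because the radial derivative is exactly $\dist(\nu_x,z)/|x-z|$ and the $|x-z|$ cancels. The quantity $\dist(\nu_x,z)$ \emph{is} uniformly bounded: the paper shows $\dist(\nu_x,z)\le c\delta\sqrt{1+|z|^2}$ for all $x\in R$ (treating $|z|\le 2r$ and $|z|>2r$ separately), and since $|y|=|\phi|\sqrt{1+|z|^2}$ the bound $c\delta|y|\cH^1(R)$ follows. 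Your dichotomy between ``$z$ near'' and ``$z$ far'' is needed for this last metric comparison, not for the derivative bound.
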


\begin{proof}[Proof of \Cref{lemma:luzin-rot}]
By \Cref{lemma:luzin}, we know that \Cref{lemma:luzin-rot} holds (with $c=1$) when $\rho$ is a translation. Now suppose that $z\in\R^2$ and $\phi\neq 0$. Then there is a constant $c_1$ (that depends only on $r$) such that $|x-z|\le r+|z|\le c_1\sqrt{1+|z|^2}$ for every $x\in E$. Since
$|y|=\sqrt{|z|^2\phi^2+\phi^2}=|\phi|\sqrt{1+|z|^2}$, therefore  \Cref{lemma:luzin-rot} follows from the trivial estimate \eqref{trivirot}, with $E$ replaced by $R$, and $|x-z|$ replaced by $c_1\sqrt{1+|z|^2}$. 
\end{proof}

\begin{proof}[Proof of \Cref{var2}]
First, suppose that $z \in \R^2$ and $\phi \neq 0$. We note that we can improve the estimate \eqref{trivirot} by noticing that the derivative of $t\mapsto |x(t)-z|$ is $\langle \dot x(t),\frac{x(t)-z}{|x(t)-z|}\rangle = \frac{1}{|x(t)-z|}\dist(\nu_{x(t)},z)$. (Here, $\dist$ denotes the Euclidean distance.) Therefore, by the coarea formula \eqref{eq:coarea-formula}, rotating the set $R$ covers area
\begin{equation}
\label{eq:rotate-R-bound}
\leq |\phi|\int_R \dist(\nu_x,z)\,d \cH^1(x).
\end{equation}

Thus, it suffices to show that if $\nu_x$ intersects the $\delta$-neighbourhood of $z$ in $\P^2$, then $\dist(\nu_x,z) \leq c\delta\sqrt{1+|z|^2}$ in $\R^2$. If $|z| \leq 2r$ and $\delta$ is sufficiently small, then the Euclidean and projective distances are comparable, and $\sqrt{1+|z|^2}$ is comparable to 1 (where the implied constants depend only on $r$), so there is nothing to prove. Now, suppose $|z| > 2r$. Since $E\subset B(0,r)$, therefore, for $\delta$ sufficiently small, the projective ball $B(z, \delta)$ is bounded away from $x\in E$. Let $\pi$ denote the quotient map $\pi:\,S^2\to\P^2$. Then there is a constant $c_2$ (that depends only on $r$) such that the angle between any two great circles through $\pi^{-1}x$ that meet $\pi^{-1}(B(z,\delta))$ is $\le c_2\delta$. Then there is a constant $c_3$ (that depends only on $r$) such that the angle between $x-z$ and $\nu_x$ in $\R^2$ is $\le c_2c_3\delta$. With $c_1$ as in the proof of \Cref{lemma:luzin-rot}, we have $\dist(\nu_x,z)\le c_2c_3\delta|x-z|\le c_1c_2c_3\delta\sqrt{1+|z|^2}$, as desired.

Now, we prove \Cref{var2} when $z \in \P^1$ (i.e., when $\rho$ is a translation). Again, for $\delta$ sufficiently small, the projective ball $B(z,\delta)$ is bounded away from $E$. Thus, if $\nu_x$ intersects $B(z, \delta)$, then the angle between $\nu_x$ and $\P^1$ is bounded away from zero. Therefore, there is a constant $c_4$ (that depends only on $r$) such that if $\nu_x$ intersects $B(z, \delta)$ for some $x \in E$, then the projective distance between $z$ and $\nu_x \cap \P^1$ is $\leq c_4 \delta$. Hence, we can apply \Cref{lemma:variation} to obtain our desired result.
\end{proof}

\section{Kakeya needle problem for translations}\label{sec:translations}

In this section $E$ is an arbitrary rectifiable set of finite $\cH^1$-measure. Without loss of generality we assume that $\cH^1(E)=1$, and that $\theta_x$ is defined for each $x\in E$, as in \Cref{sec:rectifiable}.

\subsection{Notation}

We say that a subset of $\P^1$ is an \emph{interval} if it is connected. For $\theta_1,\theta_2\in \P^1$ with $|\theta_1-\theta_2|<\pi/2$, we denote by $[\theta_1,\theta_2]$ the interval in 
$\P^1$ whose endpoints are $\theta_1,\theta_2$ and has length less than $\pi/2$. (When we use this notation, we do not specify which one is the left and which one is the right endpoint.) %

The symbol $\ii$ will always denote a finite binary sequence, i.e., a sequence $i_1i_2\dots i_k$, where $k \geq 0$ and each term $i_j$ is 0 or 1. The empty sequence $\emptyset$ corresponds to $k=0$. The length of  $\ii$ is denoted by $|\ii|$. We denote $\ii'=i_1i_2\dots i_{k-1}$ (note that $\emptyset'$ is not defined). 
We will say that $\ii'$ is the \emph{parent} of $\ii$, and $\ii$ is a \emph{child} of $\ii'$, respectively. The \emph{ancestors} and the \emph{descendants} of an $\ii$ are defined in the obvious way. We will also say that a sequence is \emph{bad} if it ends with a $0$ and \emph{good} if it ends with a $1$. (The empty sequence $\emptyset$ is also good.)

\subsection{Basic zigzag}
\label{sec:basic-zigzag-lines}

A \emph{basic zigzag} is a polygonal path which is made up of $N$ congruent and equally spaced segments in direction $\theta_0$ interlaced with $N$ congruent segments in direction $\theta_1$. (See \Cref{figure:zigzags}(b) for an example.)

The fundamental procedure in our construction is taking a  line segment $L$ and replacing it with a basic zigzag with the same endpoints. The key properties of basic zigzags are the following two geometrically obvious facts:
\begin{itemize}
  \item With $L, \theta_0, \theta_1$ fixed, we can ensure that the basic zigzag lies in an arbitrarily small neighborhood of $L$ by making the zigzag sufficiently ``fine,'' i.e., making $N$ sufficiently large. 
  \item The total length of each of the two parallel pieces of the basic zigzag depends only on $L, \theta_0, \theta_1$ and not on the fineness of the zigzag.
\end{itemize}

\subsection{Venetian blind}
\label{sec:venetian}

Like the basic zigzag, a Venetian blind is a polygonal path of line segments of two fixed directions. These segments are constructed by iterating the basic zigzag construction. We fix a line segment $L$, small parameters $0 < \gamma \leq \beta < \frac{\pi}{4}$ and a sign $\sigma \in \{-1,1\}$. The Venetian blind construction  is as follows. 

Let $\theta_L\in\P^1$ denote the direction of $L$. In our first step, we replace $L$ by a basic zigzag with directions $\theta_L-\sigma\beta$, $\theta_L+\sigma\gamma$. (See \Cref{figure:zigzags}(b).) Let $G_1$ denote the union of the line segments of the basic zigzag in direction $\theta_L+\sigma\gamma$. Iteratively, in our $i^{th}$ step for $i\ge 2$, we replace each line segment in $G_{i-1}$ by a basic zigzag of directions $\theta_L-\sigma\beta$, $\theta_L+ i\sigma\gamma$, and let $G_i$ denote the union of the line segments in direction $\theta_L+ i\sigma\gamma$. (See \Cref{figure:zigzags}(c) for the line segments obtained after the second step.) We stop this procedure after $k$ steps, where $k$ is defined by
\begin{equation}\label{rat}
k\gamma\in[\pi/2-2\beta-\gamma,\pi/2-2\beta).
\end{equation}

The zigzag we end up with is what we call our \emph{Venetian blind}.
We denote by $L_1$ the final set $G_k$ obtained by this construction, and we denote by $L_0$ the rest of the Venetian blind. That is, the Venetian blind is the polygonal path $L_0\cup L_1$ where $L_0$ and $L_1$ are unions of line segments of directions $\theta_L-\sigma \beta$ and $\theta_L+ k\sigma\gamma$ respectively.
We call $L_0$ the \emph{bad} part of the Venetian blind and $L_1$ the \emph{good} part.

We say that the directions $\theta_L - \sigma\beta, \theta_L, \theta_L + \sigma\gamma, \ldots, \theta_L + k\sigma\gamma$ have been \emph{used} in the construction of this Venetian blind. This terminology will be used in \Cref{subsection:fineness-of-the-zigzags}.

\begin{remark} Usually, in the literature, a Venetian blind consists only of the ``good'' line segments. In our definition of a Venetian blind, it contains both $L_0$ and $L_1$.
\end{remark}

\begin{remark}
\label{remark:Venetian-blind-lengths}
The lengths of $L_0$ and $L_1$ depend only on $\cH^1(L)$ and $\beta,\gamma$. They \emph{do not} depend on the fineness of the zigzags. Furthermore, condition \eqref{rat} ensures that there is a constant $c(\beta)<1$ such that
\begin{equation}\label{ratt}
\cH^1(G_j)\le \cH^1(L),\ \cH^1(L_i)\le c(\beta)\cH^1(L)
\end{equation}
for each $j=1,2,\dots,k$ and $i=0,1$.
\end{remark}

\begin{remark}
\label{remark:Venetian-blind-bad-segments}
We consider two natural ways to partition $L_0$ into line segments. The first way is into the maximal disjoint line segments of $L_0$.  Note that a line segment in this partition could be made up of segments from multiple basic zigzags in our construction. (For example, the right-most segment in \Cref{figure:zigzags}(c) has this property.) 

The second partition is a refinement of the first. We subdivide each segment from the first partition into the individual segments from the basic zigzags, i.e., each segment in the second partition is a segment from some basic zigzag used in the Venetian blind construction.

In \Cref{section:proof-of-translations} (below), we describe how to iterate the Venetian blind construction on each line segment of $L_0$. We can interpret the word ``each'' in two different ways, corresponding to the two decompositions above. In this section, it does not matter which way we choose, but in \Cref{sec:rotations}, we must choose the second one.
\end{remark}

\begin{figure}
  \centering
    \includegraphics[width=0.7\textwidth]{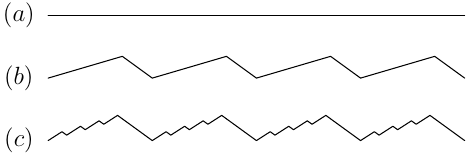}
  \caption{Two steps in the Venetian blind construction, starting with (a) as the initial segment.}
  \label{figure:zigzags}
\end{figure}

\subsection{Main construction}\label{section:proof-of-translations}

Our strategy of proving \Cref{theorem:translations} is to iterate the Venetian blind construction. Given a point in $\R^2$, we construct a polygonal path from the origin to this point. We start with the line segment $L_\emptyset$ joining the origin to this point, and then, iteratively, for each finite sequence $\ii$, we apply the Venetian blind construction to each segment in $L_{\ii}$ with some parameters $\beta = \beta_\ii,\gamma=\gamma_\ii, \sigma = \sigma_\ii$. We let $L_{\ii 0}$ be the union of all the bad parts of the Venetian blinds and $L_{\ii 1}$ be the union of all the good parts (as defined in \Cref{sec:venetian}). Since we use the same $\beta_\ii, \gamma_\ii, \sigma_\ii$ on each line segment in $L_{\ii}$, it follows by induction that every $L_\ii$ is a union of parallel line segments of some direction $\theta_\ii$.

We also iteratively assign, to each ${\bf i}$, an interval $I_\ii\subset\P^1$ by the following simple method. We put $I_\emptyset=\emptyset$. Then, for each finite sequence $\ii\neq\emp$, we define $I_{\ii}:=I_{\ii'}\cup[\theta_\ii,\theta_{\ii'}]$. Then clearly, by induction, we can see that for every $\ii\neq\emptyset$, $I_{\ii}$ is an interval and $\theta_{\ii}\in I_\ii$.

\subsection{Choosing the parameters $\beta_\ii,\gamma_\ii$}\label{sec:choose}

For each $\ii$, we fix a small $\eps_\ii$ that we will specify later. They will depend only on $\eps$ (where $\eps$ is from the statement of \Cref{theorem:translations}). We denote the number of 1's in the sequence $\ii$ by $n_\ii$. 
Then we can choose our parameters $\beta_\ii\ge \gamma_\ii>0$ in our Venetian blind constructions such that they satisfy:
\begin{enumerate}
\item $\beta_{\ii}\le\beta_{\ii'}$ for every $\ii$;
\item $\beta_{\ii}=\beta_{\kk}$, where $\kk$ is the last (i.e., youngest) good sequence among $\ii$ and its ancestors;
\item $\beta_\ii\le 1/n_\ii$ for every $\ii$;
\item $\beta_{\ii}\cH^1(L_{\ii})\le \eps_{\ii}$ if $\ii$ is good;
\item $\gamma_\ii\cH^1(L_{\ii})\le \eps_{\ii}$ for every $\ii$. 
\end{enumerate}
We can indeed make these choices, since $\cH^1(L_{\ii})$ is determined by the $\beta$'s and $\gamma$'s of its ancestors. 

We will also use the notation: 
\begin{enumerate}
\item[(6)] $\alpha_\ii=\beta_{\ii'}$ if $\ii$ is bad, and $\alpha_\ii=\gamma_{\ii'}$ if $\ii$ is good.
\end{enumerate}

\subsection{Choosing the signs $\sigma_\ii$} 
We choose each sign $\sigma_\ii$ such that it makes $I_{\ii 1} = I_\ii \cup [\theta_\ii, \theta_{\ii 1}]$ as large as possible. That is, if $\theta_\ii$ is in the right half of the interval $I_\ii$ (where we embed $I_\ii$ into $\R$), then we choose $\sigma_\ii = 1$; otherwise, we choose the $\sigma_\ii = -1$. (If $\ii=\emptyset$, or if $\theta_\ii$ is in the middle of the interval $I_\ii$, or if $I_\ii=\P^1$, then we can choose the sign arbitrarily.) Our choice of $\sigma_\ii$ ensures that the length of the interval $I_{\ii 1}$ can be estimated by
$$|I_{\ii 1}|\ge |I_{\ii}|/2+|\theta_\ii-\theta_{\ii 1}|\ge |I_\ii|/2+\pi/2-2\beta_\ii.$$
The second inequality follows from \eqref{rat}. This can be re-written as:
$$\pi-|I_{\ii 1}|\le(\pi-|I_\ii|)/2+2\beta_\ii.$$
Suppose $\kk$ is the last good sequence among $\ii$ and its ancestors, and ${\mm}$ is the second-to-last one. Then since the intervals $I_\ii$ are increasing and the parameters $\beta_\ii$ are decreasing along each family line, we have $\pi-|I_{\ii}|\le \pi-|I_{\kk}|$ and hence
\begin{equation}\label{I}
\pi-|I_{\ii}|\le(\pi-|I_{\mm}|)/2+2\beta_{\mm}.
\end{equation} 

\subsection{Stopping time}\label{sec:stopping-time}
We need to define when we stop our Venetian blind constructions on various family lines. In order to construct our polygonal path $P$ (for \Cref{theorem:translations}), we need to ensure that %
ultimate extinction occurs.
 We will, of course, define the polygonal path $P$ as the union of those $L_\ii$ where the construction stops.

First of all, we stop our Venetian blind construction at $L_\ii$ if $\cH^1(L_{\ii})\le\eps_\kk$, where $\kk$ is the last good sequence among $\ii$ and its ancestors. By \eqref{ratt} and condition (2) in \Cref{sec:choose},
\begin{equation}\cH^1(L_\ii)\ge c(\beta_\ii)^{-1}\cH^1(L_{\ii 0})\ge c(\beta_\ii)^{-2}\cH^1(L_{\ii 00})\ge\dots
\end{equation} 
This ensures that, for each $\ii$, the family line $\ii,{\ii 0},{\ii 00},\dots$ dies out after finitely many generations, where the number of generations depends only on $\beta_\ii$. Therefore $\min\{n_\ii:\,|\ii|=k\}\to\infty$ as $k\to\infty$, and consequently, by condition (3), $\max\{\beta_\ii:\,|\ii|=k\}\to 0$. Using this and \eqref{I}, if $k$ is large enough, then 
\begin{equation}\label{k}
\max\{\pi-|I_\ii|:\,|\ii|=k\}<\eps.
\end{equation}
We stop our whole construction after $k$ generations, where $k$ is so large that \eqref{k} holds.

By choosing the parameters $\eps_\ii$ such that $\sum_\ii\eps_\ii$ is small enough, by \Cref{lemma:luzin}, \Cref{remark:luzinrem} and our assumption $\cH^1(E)=1$, we can ignore those $L_\ii$ for which  $\cH^1(L_{\ii})\le\eps_\kk$, where $\kk$ is the last good sequence among $\ii$ and its ancestors. (This is because each $\ii$ at which we stop our construction has a different ``last good sequence among $\ii$ and its ancestors.'') For the line segments that belong to the remaining part of the polygonal path, we have $|I_\ii|>\pi-\eps$ by \eqref{k}. 

Using the previous paragraph, we choose our balls $B(\theta_i, \eps)$ for \Cref{theorem:translations} as follows. If we ignore $L_\ii$ (as described in the previous paragraph), then for each line segment $L_i$ in $L_{\ii}$, we let $\theta_i$ to be any point we like.  If we do not ignore $L_\ii$, then for each $L_i$ in $L_\ii$, we choose $\theta_i$ so that $\P^1 \setminus B(\theta_i, \eps) \subset  I_\ii$. We can do this because $|I_\ii|>\pi-\eps$. In both cases, since $\theta_\ii \in I_\ii$ for all $\ii$, we can also choose each $\theta_i$ so that 
\begin{equation}\label{thetai}
\theta_\ii \not\in B(\theta_i, \epsilon).
\end{equation}

In order to finish the proof of \Cref{theorem:translations}, it suffices to show that
\begin{equation}\label{4.6}
A:=\bigcup_{L_\ii\subset P} (L_\ii+\{x\in E:\,\theta_x\in I_\ii\})
\end{equation} 
has small measure.

\begin{remark}
So far, our definition of the polygonal path $P$ did not depend on the set $E$. In what follows, we will show that if the zigzags we use are sufficiently fine (depending on the set $E$) then indeed the set $A$ in \eqref{4.6} has small measure. Note that the fineness of the zigzags is the only remaining parameter we need to specify. The parameters $\beta_\ii, \gamma_\ii, \sigma_\ii$,  the lengths $\cH^1(L_\ii)$, the stopping time, and the intervals $I_\ii$ are all independent of the fineness of the zigzags and of $E$.
\end{remark}

\subsection{Fineness of the zigzags, and the small neighborhood lemma} 
\label{subsection:fineness-of-the-zigzags}

We have already chosen all the directions we use in all the basic zigzags to construct $P$. These directions divide $\P^1$ into finitely many intervals, which we call \emph{elementary intervals}. By an elementary interval we mean a \emph{closed} interval $I\subset\P^1$ such that its endpoints are directions used in our construction, and such that $I$ does not contain any other such direction. (See \Cref{sec:venetian} for what it means for a direction to be ``used in our construction.'')

Since we already know the length $\cH^1(P)$, we also know how large subset of $E$ we may ignore by \Cref{lemma:luzin} and \Cref{remark:luzinrem}. Therefore, by throwing away a sufficiently small subset of $E$ if necessary, we can assume that $E$ is compact and also that $x\mapsto\theta_x$ is a continuous function on $E$. 

For an elementary interval $I$, we denote
$$E_I=\{x\in E:\,\theta_x\in I\}.$$
Because of our assumptions above, $E_I$ is also compact.

Here is our strategy for choosing the fineness of the zigzags. Suppose that for some $E_I$ and for some line  segment $L$ in our construction, we have obtained the estimate $|L + E_I| < \eta$ for some $\eta$. Then, we require all zigzags descending from $L$ to be fine enough so that they stay in a sufficiently small neighborhood of $L$. This ensures that by the small neighborhood lemma, translating $E_I$ along the descendants of $L$ still covers area $< \eta$. %

In the next section, we obtain finitely many estimates of the form $|L + E_I| < \eta$. We make the zigzags sufficiently fine at each step so that these estimates are preserved by the descendants of $L$, as explained above.

\subsection{Area estimate}
We fix an elementary interval $I$ and the corresponding set $E_I$, and revisit the Venetian blind construction. Our aim is to estimate the measure of the set
\begin{equation}\label{4.7}
A_I:=\bigcup_{L_\ii\subset P \text{ s.t. } I \subset I_\ii} (L_\ii+E_I).
\end{equation} 
Our final goal is to show $|\bigcup_I A_I| < \epsilon$.
In the next two paragraphs, we will use the same notations as in \Cref{sec:venetian}.

First assume that the elementary interval $I$ is contained in the interval $[\theta_L + (j-1)\sigma\gamma$, $\theta_L+ j\sigma\gamma]$ for some $j=1,2,\dots,k$. Since the line segments of $G_j$ are of direction $\theta_L+ j\sigma\gamma$, it follows from \Cref{lemma:variation} (and the estimate \eqref{ratt}) that translating $E_I$ along the line segments of $G_j$ covers area $\lesssim\gamma\cH^1(E_I)\cH^1(G_j)
\le\gamma\cH^1(E_I)\cH^1(L)$. By our remarks in the previous section about choosing the fineness of the zigzags, the same estimate $\gamma \cH^1(E_I)\cH^1(L)$ remains true if we translate $E_I$ along the line segments of $G_k$. 

We can argue similarly when the elementary interval is contained in $[\theta_L,\theta_L-\sigma\beta]$. Therefore, we proved the following lemma:
\begin{lemma}\label{lemma:lemm}
  Suppose that an elementary interval $I$ is contained in $[\theta_\ii,\theta_{\ii'}]$. Then $|L_\ii + E_I|\lesssim\alpha_\ii \cH^1(E_I)\cH^1(L_{\ii'})$.
\end{lemma}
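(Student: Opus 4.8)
The plan is to unwind the Venetian blind construction of \Cref{sec:venetian} one layer at a time and apply \Cref{lemma:variation} to each "good" direction that appears, then use the fineness-of-zigzags remarks to push each estimate down to the final segments. Recall that $L_\ii$ is built from $L_{\ii'}$ by replacing each segment $L$ of $L_{\ii'}$ by a Venetian blind with parameters $\beta_\ii,\gamma_\ii,\pm_\ii$; when $\ii$ is good, $L_\ii$ is the union of the good parts $G_k$ (direction $\theta_L \pm k\gamma = \theta_\ii$), and when $\ii$ is bad, $L_\ii$ is the union of the bad parts $L_0$ (direction $\theta_L \mp \beta = \theta_\ii$). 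In either case the relevant segments of $L_\ii$ all have direction $\theta_\ii$, and the hypothesis $I \subset [\theta_\ii, \theta_{\ii'}]$ is exactly the condition that $E_I$ consists of points whose tangent differs from $\theta_\ii$ by at most the width $|\theta_\ii - \theta_{\ii'}| = \alpha_\ii$ (using $\alpha_\ii = \beta_{\ii'}$ when $\ii$ is bad, $\alpha_\ii = \gamma_{\ii'}$ when $\ii$ is good, and $\alpha_\ii$ is small enough for \Cref{lemma:variation} to apply).

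First I would fix a segment $L$ of $L_{\ii'}$ and trace through its Venetian blind. Since $I \subset [\theta_\ii, \theta_{\ii'}] = [\theta_L \pm (j-1)\gamma_\ii, \theta_L \pm j\gamma_\ii]$ for the appropriate $j$ (namely $j = k$ in the good case, and the analogous "$[\theta_L, \theta_L \mp \beta_\ii]$" situation in the bad case), \Cref{lemma:variation} together with the length bound \eqref{ratt} gives that translating $E_I$ along the segments of $G_j$ (resp. along the bad segments) covers area $\lesssim \alpha_\ii \cH^1(E_I)\cH^1(L)$. Here I use that, for the direction of these segments, $E_I$ lies within $\lesssim \alpha_\ii$ of that direction. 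Then I invoke the fineness discussion: the subsequent zigzags descending from $G_j$ (turning it into the final $G_k = L_\ii$, in the good case) are chosen fine enough to stay in an arbitrarily small neighborhood of $G_j$, so by the small neighborhood lemma (\Cref{lemma:small-nbhd-small}) the same area bound $\lesssim \alpha_\ii \cH^1(E_I)\cH^1(L)$ persists for translating $E_I$ along the corresponding segments of $L_\ii$ itself.

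Finally I would sum over all segments $L$ making up $L_{\ii'}$: since $\sum_L \cH^1(L) = \cH^1(L_{\ii'})$, adding the per-segment estimates yields $|L_\ii + E_I| \lesssim \alpha_\ii \cH^1(E_I)\cH^1(L_{\ii'})$, which is exactly the claim. I would phrase this as: "We can argue similarly when the elementary interval is contained in $[\theta_L, \theta_L \mp \beta]$. Therefore, for an $L_\ii$, by adding up these estimates for all the line segments $L$ in $L_{\ii'}$, we proved the lemma." The only genuinely delicate point — and the one I would be careful to state rather than gloss over — is the bookkeeping that the direction of the relevant segments of $L_\ii$ really is $\theta_\ii$ and that $I \subset [\theta_\ii,\theta_{\ii'}]$ pins $E_I$'s tangents to within $\alpha_\ii$ of that direction, so that \Cref{lemma:variation} is legitimately applicable (in particular that $\alpha_\ii$ is "sufficiently small" in the sense required there, which follows from conditions (1)–(3) and the stopping-time analysis forcing $\beta_\ii, \gamma_\ii \to 0$). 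Everything else is a routine combination of \Cref{lemma:variation}, \eqref{ratt}, and \Cref{lemma:small-nbhd-small}.
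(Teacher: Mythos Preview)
Your overall strategy --- apply \Cref{lemma:variation} at an intermediate stage of the Venetian blind, then use the small neighborhood lemma to push the estimate to $L_\ii$, then sum over segments of $L_{\ii'}$ --- matches the paper. But there is a genuine error in the good case that breaks the argument as written.

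You claim that ``$|\theta_\ii - \theta_{\ii'}| = \alpha_\ii$'' and that ``$[\theta_\ii,\theta_{\ii'}] = [\theta_L \pm (j-1)\gamma,\, \theta_L \pm j\gamma]$ for the appropriate $j$ (namely $j=k$ in the good case).'' Neither is correct when $\ii$ is good. In that case $\theta_\ii = \theta_{\ii'} \pm k\gamma_{\ii'}$, so $|\theta_\ii - \theta_{\ii'}| = k\gamma_{\ii'}$, and by \eqref{rat} this is close to $\pi/2$, \emph{not} to $\alpha_\ii = \gamma_{\ii'}$. Consequently an elementary interval $I \subset [\theta_\ii,\theta_{\ii'}]$ can sit anywhere in an interval of length $\approx \pi/2$, and the tangents of $E_I$ need not be within $\lesssim \gamma_{\ii'}$ of the direction $\theta_\ii$ of $L_\ii$. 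Applying \Cref{lemma:variation} directly to $G_k$ therefore only yields a bound $\lesssim (k\gamma_{\ii'})\,\cH^1(E_I)\cH^1(L)$, which is useless.

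The fix --- and this is exactly what the paper does --- is to first locate the specific $j \in \{1,\dots,k\}$ for which $I \subset [\theta_L \pm (j-1)\gamma_{\ii'},\, \theta_L \pm j\gamma_{\ii'}]$. For \emph{that} $j$ the tangents of $E_I$ lie within $\gamma_{\ii'}$ of the direction of $G_j$, so \Cref{lemma:variation} gives $\lesssim \gamma_{\ii'}\cH^1(E_I)\cH^1(G_j) \le \gamma_{\ii'}\cH^1(E_I)\cH^1(L)$ by \eqref{ratt}. Only then does the small neighborhood lemma transfer the bound from $G_j$ down to $G_k$. Your second paragraph gestures at this (``subsequent zigzags descending from $G_j$ \ldots turning it into the final $G_k$''), but because you have already set $j = k$ the transfer step is vacuous and the crucial gain of the factor $\gamma_{\ii'}$ is never obtained. (Minor: the parameters for the Venetian blind on $L_{\ii'}$ are $\beta_{\ii'},\gamma_{\ii'}$, not $\beta_\ii,\gamma_\ii$.)
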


Now consider an $L_\ii\subset P$ with $I\subset I_\ii$. Since the intervals $I_\ii,I_{\ii'},I_{\ii''},\dots$ are decreasing, there is a $\kk$ among $\ii$ and its ancestors such that $I\subset I_\kk\setminus I_{\kk'}\subset[\theta_\kk,\theta_{\kk'}]$. By \Cref{lemma:lemm}, 
\begin{equation}\label{l''}
|L_{\kk} + E_I| \lesssim \alpha_\kk \cH^1(E_I)\cH^1(L_{\kk'}).
\end{equation}

The estimates \eqref{l''} are precisely those that we would like to maintain when we replace the set $L_\kk$ by all of its final descendants in $P$, as described in the previous section. Thus, we make the zigzags sufficiently fine so that these estimates are preserved.

Therefore, instead of taking the sum of the estimates \eqref{l''} for \emph{all} finite sequences $\kk$, it is sufficient to take the sum for \emph{some} $\kk$, each of which belongs to a different family line. Let $\kk_1,\kk_2,\dots$ be arbitrary sequences from different family lines.

We distinguish two cases: if $\kk_m$ is good, then by (5) and (6) in \Cref{sec:choose}, 
\begin{equation}\label{1}
\alpha_{\kk_m}\cH^1(E_I)\cH^1(L_{{\kk_m}'})
\le\eps_{{\kk_m}'}\cH^1(E_I).
\end{equation}
With the bad $\kk_m$, the same trivial bound does not work. Nonetheless, because of the ``different family lines condition,'' each bad $\kk_m$ has a different ``last good among $\kk_m$ and its ancestors.'' Therefore by (2), (4), and (6) in \Cref{sec:choose} and \eqref{ratt}, we have
\begin{equation}\label{2}
\sum_{\kk_m \text{ bad}} \alpha_{\kk_m}\cH^1(E_I)\cH^1(L_{{\kk_m}'})
\le\sum_{\kk}\eps_{\kk}\cH^1(E_I),
\end{equation}
where the summation on the right is taken over all ${\kk}$. Adding together the estimates \eqref{1} for all good $\kk_m$ and \eqref{2}, we have
\begin{equation}
\label{eq:AI-bound}
|A_I|\le 2\sum_{\kk}\eps_{\kk}\cH^1(E_I).\end{equation}
Since each $x\in E$ belongs to at most two of the sets $E_I$, by summing over $I$ and choosing $\sum_{\kk}\eps_{\kk}$ small enough, the proof of \Cref{theorem:translations} is finished.

\section{Kakeya needle problem for rotations}\label{sec:rotations}

Our aim in this section is to prove the following theorem, which can be thought of as a direct analogue of \Cref{theorem:translations}. Recall when we create a polygonal path $P \subset \Isom^+(\R^2)$ from a sequence of rotations $\{\rho_i\}$, we always interpret the rotations in the \emph{intrinsic} coordinate system. We will occasionally use the phrase \emph{intrinsic rotation} to remind ourselves of this convention.

\begin{theorem}
\label{theorem:rotations-on-line}
Let $E \subset \R^2$ be a bounded rectifiable set of finite $\cH^1$-measure. Let $\epsilon > 0$, and let $\rho\in\Isom^+(\R^2)$ be arbitrary. Let $\ell\subset\P^2$ be a line through the projective center $z$ of $\rho$.

Then there are intrinsic rotations $\rho_i=\rho(x_i)$ with projective centers $z_i \in B(\ell, \epsilon) \subset \P^2$ such that the corresponding polygonal path $P=\bigcup_i L_i\subset\Isom^+(\R^2)$ connects the identity and $\rho$, and for each $i$, there exists a $u_i \in \ell$ such that 
\begin{equation}
\label{eq:theorem-rotations}
|\bigcup_{i}\bigcup_{p\in L_i} p(\{x\in E: \nu_x \cap \ell \cap B(u_i, \eps)  = \emptyset\})|< \eps.
\end{equation}
\end{theorem}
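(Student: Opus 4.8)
The plan is to mimic the proof of \Cref{theorem:translations} as closely as possible, but carrying out the Venetian blind construction inside $\Isom^+(\R^2)$ rather than in $\R^2$, using rotations in place of translations and using \Cref{var2} and \Cref{lemma:luzin-rot} in place of \Cref{lemma:variation} and \Cref{lemma:luzin}. First I would set up the analogue of a ``line segment'': a ``segment'' $L_i$ in the path is a one-parameter family of intrinsic rotations sharing a common projective center (in the intrinsic frame), so that translating $E$ along $L_i$ really is rotating $E$ about one point, and \Cref{var2}/\Cref{lemma:luzin-rot} apply directly. The role played by the direction $\theta_L \in \P^1$ of a segment is now played by the projective center $z_i \in \P^2$; the role of the interval $I_\ii \subset \P^1$ of ``good tangent directions'' is played by an interval (arc) $J_\ii \subset \ell$ of ``good'' points $u$ on the line $\ell$, where by \Cref{var2} a point $x \in E$ is handled well by a rotation with projective center $z$ near $u$ precisely when $\nu_x$ passes near $u$ along $\ell$.

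The main construction then proceeds by iterating a ``basic zigzag'' in $\Isom^+(\R^2)$: given a segment $L$ with projective center $w$ (intrinsically) on or near $\ell$, we replace it by a polygonal path in $\Isom^+(\R^2)$, alternating segments whose intrinsic projective centers lie near two chosen points of $\ell$, one of them far along $\ell$ (the analogue of $\theta_L \pm i\gamma$, sweeping the good arc $J$ out toward the ends of $\ell$) and one of them a ``bad'' center close to $w$ (the analogue of $\theta_L \mp \beta$). The key facts about basic zigzags — that by taking the zigzag fine enough it stays in an arbitrarily small neighborhood of $L$ in $\Isom^+(\R^2)$, and that the total ``length'' $\sum |x_i|$ of each of the two parallel families is controlled independently of fineness — need to be re-established; the second is where \Cref{remark:unbounded} shows boundedness of $E$ is essential and where the error estimates alluded to in the introduction (how far the centers move under composition of intrinsic rotations, and the resulting growth of $\sum|x_i|$) enter. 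The bookkeeping from \Cref{sec:choose}--\Cref{sec:stopping-time} (parameters $\beta_\ii, \gamma_\ii$, signs $\pm_\ii$, stopping time, the decreasing sequence of complements $\pi - |I_\ii|$) carries over essentially verbatim, with $|I_\ii|$ replaced by the length of the good arc $J_\ii \subset \ell$, and with the analogue of condition \eqref{rat} chosen so that (a) the good arc exhausts $\ell$ in the limit and (b) the bad segments shrink geometrically so that ultimate extinction occurs; the small neighborhood lemma \Cref{lemma:small-nbhd-small} is then invoked, exactly as in \Cref{sec:translations}, to fix the fineness of the zigzags after all other parameters and all the finitely many area estimates $|\bigcup_{p\in L} p(E_J)| < \eta$ have been obtained, where now $E_J = \{x \in E : \nu_x \cap \ell \in J\}$ for $J$ an elementary sub-arc of $\ell$.

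Finally, the area estimate is assembled as in \Cref{sec:translations}: for an elementary arc $J \subset \ell$ with $J \subset [z_\ii$-center neighborhood$]$, one applies \Cref{var2} to bound $|\bigcup_{p \in L_\ii} p(E_J)| \lesssim c\,\alpha_\ii\, \cH^1(E_J)\, (\text{length of } L_{\ii'})$, passes to one representative $\kk_m$ per family line (the estimate being preserved by fine descendants, by the small neighborhood lemma), splits into good and bad $\kk_m$, and sums using conditions (4) and (5) of \Cref{sec:choose} together with $\sum_\kk \eps_\kk$ small and $\cH^1(E) = 1$; since each $x \in E$ lies in at most two $E_J$, summing over $J$ gives total covered area $< \eps$. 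One chooses $u_i \in \ell$ for each $L_i \subset L_\ii$ so that $\{u \in \ell : \nu_x \cap \ell \cap B(u_i,\eps) = \emptyset \text{ for all } x \text{ not ignored}\}$, i.e.\ so that $\ell \setminus B(u_i,\eps) \subset J_\ii$, which is possible once the good arc $J_\ii$ has co-length $< \eps$ in $\ell$. The main obstacle — and the reason this theorem is harder than \Cref{theorem:translations} — is that composing intrinsic rotations about nearly-collinear projective centers does not keep the centers collinear: one must track how the effective center of a composite ``segment'' drifts off $\ell$ and bound this drift (hence the $B(\ell,\eps)$ rather than $\ell$ in the statement), and simultaneously bound how much extra area this drift causes $E$ to sweep, using \Cref{lemma:luzin-rot} to absorb the error terms. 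Making the two estimates — drift of centers vs. fineness of zigzags vs. area covered — compatible, so that the iteration still terminates with the good arc exhausting $\ell$, is the technical heart of the argument; everything else is a transcription of \Cref{sec:translations}.
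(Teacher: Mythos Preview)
Your outline is correct and matches the paper's strategy. The one concrete device you do not name, and which turns your ``carries over essentially verbatim'' from a hope into a fact, is this: the paper first runs the \emph{entire} translation construction of \Cref{sec:translations} to produce the vectors $v_\jj\in\R^2$, then chooses a linear isometry $Q:\R^3\to\R^3$ sending $(v,0)$ to $x$ and the plane $\{\phi=0\}$ to the plane of $\ell$, and sets $x_\jj:=Q(v_\jj,0)$. Since $Q$ is an isometry, the angle between $v_\jj$ and $v_{\jj'}$ equals the projective distance between $z_\jj,z_{\jj'}\in\ell$, so $J_\ii=QI_\ii$ has the same length as $I_\ii$, and all of the parameters $\beta_\ii,\gamma_\ii,\pm_\ii$, the stopping time, and the lengths $|x_\jj|=|v_\jj|=\cH^1(L_\jj)$ are literally imported rather than re-derived on $\ell$. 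The genuinely new work is then confined to \Cref{lemma:star-approx-add} and \Cref{lemma:tilde-x1}, which show that for $N_\jj$ large the actual centers satisfy $|\tilde x_\jj-x_\jj|<r$; choosing $r$ small enough forces $\tilde z_\jj\in B(z_\jj,2\alpha_\jj)\cap B(\ell,\eps)$, and the drift is thereby absorbed into the radius of the ball in \Cref{var2} rather than handled as a separate area error via \Cref{lemma:luzin-rot} (the latter is used only, as in the translation case, to discard small pieces of $E$ and of $\tilde P$).
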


\subsection{Basic zigzags, deconstructed}
\label{sec:basic-zigzag}
The heart of the matter in our proof of \Cref{theorem:translations} was that we repeatedly replaced line segments by basic zigzags. Each line segment $L$ represented a translation. 
In our proof of \Cref{theorem:rotations-on-line} we will do an analogue construction with rotations instead of translations. However, this is a bit more delicate, so first, we present the basic zigzag construction for translations in more detail than before. We decompose this construction into two steps. 

The first step of the basic zigzag construction for translations divides a line segment $L$ into $N$ equal parts. In the second step, for translations, we replace each of the $N$ line segments by two line segments of given directions. We can represent these two steps by the two equations
\begin{align*}
v &= (v/N) + \cdots + (v/N)
\\
&= (v_0/N) + (v_1/N) + \cdots + (v_0/N) + (v_1/N),
\end{align*}
where $v_0$ and $v_1$ are vectors in the two given directions and such that $v = v_0 + v_1$.

The first step for rotations is easy to understand: we replace a rotation $\rho=\rho(x)$ by $N$ copies of $\rho(x/N)$, which are rotations around the same projective center as $\rho$ but with angle reduced by a factor of $N$. In the intrinsic coordinate system, if we apply $\rho(x/N)$ repeatedly $N$ times, then indeed we obtain $\rho(x)$. 

The second step for rotations would be to replace each $\rho(x/N)$ by $\rho(x_0/N)$ and $\rho(x_1/N)$, for some $x_0$ and $x_1$. We need to determine the necessary condition on $x,x_0, x_1$, i.e., the analogue of $v = v_0 + v_1$. It is not as simple as $x = x_0 + x_1$; the composition of $\rho(x_0)$ followed by $\rho(x_1)$ is not necessarily $\rho(x_0+x_1)$. Therefore, first we need to understand which rotations a given $\rho$ can be replaced by. We do this in the next section.

\subsection{The structure of intrinsic compositions}

Using the notation $\rho=\rho(w,\phi)$ and $v$ from \Cref{rott}, we can see that $\rho_3$ can be replaced by $\rho_1,\rho_2$ if
\begin{equation}\label{eq:phi123}
\phi_1+\phi_2=\phi_3
\end{equation}  
and 
\begin{equation}\label{v12}
v_1+e^{i\phi_1}v_2=v_3.
\end{equation} 
Indeed, \eqref{eq:phi123} says that by applying $\rho_1$ and $\rho_2$, we rotate $\R^2$ by angle $\phi_1+\phi_2$. And \eqref{v12} says that the image of $0$ after applying $\rho_1$ and $\rho_2$ will be $v_1+e^{i\phi_1}v_2$. To see this,  the first rotation, $\rho_1$, displaces $0$ by $v_1$. Then, $\rho_2$ displaces it further. This displacement is $v_2$ in the intrinsic coordinate system and $e^{i\phi_1}v_2$ in the extrinsic coordinate system, where the extra factor of $e^{i\phi_1}$ is due to the offset in directions between the intrinsic and extrinsic coordinate systems introduced by $\rho_1$. If two rotations have the same angle and they map 0 to the same point, then they are the same rotation. 

For $x_j\in\R^3\setminus\{0\}$, we will use the notation $x_3=x_1\star x_2$ if \eqref{eq:phi123} and \eqref{v12} hold for $\rho_j=\rho(x_j)$. %

\begin{remark}
Note that \eqref{eq:phi123} and \eqref{v12} hold if and only if $\rho_3 = \rho_1 \circ \rho_2$. In general, the composition of two \emph{intrinsic} rotations $\rho_1$ and $\rho_2$ (in that order) is $\rho_1 \circ \rho_2$, \emph{not} $\rho_2 \circ \rho_1$.
\end{remark}

\begin{remark}
We do not need the following fact in this paper, but the conditions \eqref{eq:phi123} and \eqref{v12} imply that $\star$ is a group operation on $\R^3$. The group $(\R^3, \star)$ has the structure of the semidirect product $\R^2 \rtimes \R$, where $\phi \in \R$ acts on $v \in \R^2$ by $v \mapsto e^{i\phi} v$. 

The extra difficulty in our proof for rotations is essentially due to the failure of $\star$ to agree with $+$. Nonetheless, we can modify the proof for translations to obtain a proof for rotations because for small $x_1, x_2$, $\star$ is ``close enough'' to $+$, as we show in the next section. 
\end{remark}

Our main estimate is the following:
\begin{lemma}\label{lemma:star-approx-add}
Let $x_j=(w_j,\phi_j)\in \R^3\setminus\{0\}$ with $x_3=x_1\star x_2$ and $|\phi_j|\lesssim 1$ for each $j$. Then 
\begin{equation}\label{eq:star-approx-add}
|w_1 + w_2 - w_3| \lesssim |w_2\phi_1| + |w_1\phi_1| + |w_2\phi_2| + |w_3\phi_3|.
\end{equation}
\end{lemma}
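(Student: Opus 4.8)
The plan is to work directly from the defining relations \eqref{eq:phi123} and \eqref{v12} together with the dictionary between $w$ and $v$ in \Cref{rott}, namely $w = z\phi$ and $v = z(1-e^{i\phi})$ when $\phi \neq 0$, and $w = iv$ when $\phi = 0$. First I would reduce to the generic case where all three angles are nonzero; the boundary cases ($\phi_j = 0$ for some $j$) follow either by a continuity/limiting argument or by inspecting the same computation with the appropriate substitution $w_j = iv_j$. In the generic case, I would translate \eqref{v12} into a relation among the $w_j$. Since $v_j = z_j(1-e^{i\phi_j})$ and $w_j = z_j \phi_j$, we have $v_j = w_j \cdot \frac{1-e^{i\phi_j}}{\phi_j}$, so \eqref{v12} becomes an identity of the form $w_1 g(\phi_1) + e^{i\phi_1} w_2 g(\phi_2) = w_3 g(\phi_3)$, where $g(\phi) = \frac{1-e^{i\phi}}{\phi} = -i + O(\phi)$.

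The key step is then a Taylor expansion bookkeeping argument. Writing $g(\phi_j) = -i + r_j$ with $|r_j| \lesssim |\phi_j|$, and $e^{i\phi_1} = 1 + s_1$ with $|s_1| \lesssim |\phi_1|$ (using $|\phi_j| \lesssim 1$), the relation above becomes
\[
-i(w_1 + w_2 - w_3) = -w_1 r_1 - e^{i\phi_1} w_2 r_2 + w_3 r_3 - s_1 w_2 (-i),
\]
after moving the leading-order terms to one side. Taking absolute values and using $|e^{i\phi_1}| = 1$, $|r_j| \lesssim |\phi_j|$, $|s_1| \lesssim |\phi_1|$ gives
\[
|w_1 + w_2 - w_3| \lesssim |w_1 \phi_1| + |w_2 \phi_2| + |w_3 \phi_3| + |w_2 \phi_1|,
\]
which is exactly \eqref{eq:star-approx-add}. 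The arithmetic here is routine once the substitution is set up correctly; the only mild subtlety is keeping track of which factor gets the extra $e^{i\phi_1}$ (coming from the intrinsic-composition twist in \eqref{v12}), which is why the term $|w_2\phi_1|$ appears but not $|w_1 \phi_2|$ — the two arguments play asymmetric roles.

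The main obstacle I anticipate is not the estimate itself but handling the degenerate cases cleanly, i.e.\ when one or more of $\phi_1, \phi_2, \phi_3$ vanishes (and correspondingly $w_j = iv_j$ rather than $z_j\phi_j$). One must check, for instance, that if $\phi_3 = 0$ then $\phi_1 = -\phi_2$ by \eqref{eq:phi123}, so the two translations-as-rotations still compose consistently, and verify that the bound degenerates correctly (the term $|w_3\phi_3|$ simply drops out). A uniform way to sidestep the case analysis is to observe that both sides of \eqref{eq:star-approx-add} are continuous in $(x_1, x_2)$ on the relevant region and that $x_3 = x_1 \star x_2$ depends continuously on $(x_1,x_2)$, so it suffices to prove the inequality on the dense open set where all $\phi_j \neq 0$ and then pass to the limit; I would likely phrase the proof that way to keep it short. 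One should also double-check that the implied constants genuinely depend only on the absolute bound in $|\phi_j| \lesssim 1$ and on nothing else, which is clear from the expansions of $g$ and of $e^{i\phi_1}$ on a bounded range of angles.
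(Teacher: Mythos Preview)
Your proposal is correct and is essentially the same argument as the paper's proof: both compare $w_j$ to $-iv_j$ with error $\lesssim |w_j\phi_j|$ (your $w_j r_j$ is exactly the paper's $v_j+iw_j$), then extract the cross term $|w_2\phi_1|$ from the factor $e^{i\phi_1}$ in \eqref{v12} and finish by the triangle inequality. The only cosmetic difference is that the paper works directly with the quantities $v_j$ and $w_j$, which are defined in all cases, so the degenerate-angle issue you flag never arises (indeed $v_j+iw_j=0$ when $\phi_j=0$); your limiting argument is fine but unnecessary.
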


\begin{proof}
Observe that $|v_j| \leq |w_j|$ and $|v_j + iw_j| \lesssim |z_j \phi_j^2| = |w_j \phi_j|$. (For the second inequality, we used $|\phi_j| \lesssim 1$.) By \eqref{v12}, $|v_1 + v_2 - v_3| = |v_2(1-e^{i\phi_1})| \leq |w_2 \phi_1|$. Thus indeed,
\begin{align*}
|w_1 + w_2 - w_3|
&\leq
|v_1 + v_2 - v_3| + |v_1 + iw_1| + |v_2 + iw_2| + |v_3 + iw_3|
\\
&\lesssim
|w_2 \phi_1| + |w_1 \phi_1| + |w_2 \phi_2| + |w_3\phi_3|.
\qedhere
\end{align*}
\end{proof}

\subsection{Basic zigzag construction for rotations}
Now we are ready to define our basic zigzag construction in general. This construction, for given $x,x_0,x_1\in\R^3\setminus\{0\}$ with $x=x_0+x_1$ and a given $N$, replaces the rotation $\rho(x)$ by the sequence of intrinsic rotations $\rho(y_0),\rho(y_1),\dots,\rho(y_0),\rho(y_1)$. We define $y_0=x_0/N$, and then $y_1=\tilde x_1/N$ is defined by $y_0 \star y_1 = x/N$.

The key properties of the construction are the following. 

\begin{lemma}
\label{lemma:tilde-x1}
For any given $\eps>0$, if $N$ is sufficiently large, then:
\begin{enumerate}
\item $|y_j|<\eps$ for $j = 0, 1$;
\item $|\tilde x_1-x_1|<\eps$.
\end{enumerate}
\end{lemma}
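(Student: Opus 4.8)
The plan is to analyze the two quantities $y_0 = x_0/N$ and $y_1 = \tilde x_1 / N$ directly using the definition $y_0 \star y_1 = x/N$ and the estimate from \Cref{lemma:star-approx-add}. Throughout, write $x = (w,\phi)$, $x_0 = (w_0, \phi_0)$, $x_1 = (w_1, \phi_1)$, and correspondingly $y_j = (w_j/N, \phi_j/N)$ for $j=0,1$ — note the angle of $y_j$ is $\phi_j/N$ since angles add under $\star$ and we want $\phi_0/N + (\text{angle of } y_1) = \phi/N$, and $x = x_0 + x_1$ forces $\phi = \phi_0 + \phi_1$. The point is that $x$, $x_0$, $x_1$ are \emph{fixed}, so all their coordinates are bounded by some constant depending only on these; the only thing varying is $N$.

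First I would prove (1). For $y_0 = x_0/N$ this is immediate: $|y_0| = |x_0|/N < \eps$ once $N > |x_0|/\eps$. For $y_1 = \tilde x_1/N$ I first need a crude bound on $\tilde x_1$, which is what (2) will sharpen. Apply \Cref{lemma:star-approx-add} with $(x_1, x_2, x_3)$ there equal to $(y_0, y_1, x/N)$: the hypothesis $x_3 = x_1 \star x_2$ holds by the definition of $y_1$, and the angles $\phi_0/N$, (angle of $y_1$) $= (\phi - \phi_0)/N$, $\phi/N$ are all $\lesssim 1$ once $N \geq 1$ (they are in fact $\to 0$). The lemma gives
\begin{equation*}
\bigl| \tfrac{w_0}{N} + \tfrac{\tilde w_1}{N} - \tfrac{w}{N} \bigr| \lesssim \tfrac{1}{N^2}\bigl( |w_1||\phi_0| + |w_0||\phi_0| + |w_1||\phi_1| + |w||\phi| \bigr) \lesssim \tfrac{C}{N^2},
\end{equation*}
where $\tilde w_1$ is the first (two-)coordinate of $\tilde x_1$ and $C$ depends only on $x, x_0, x_1$; here I used that $|\tilde w_1|$ appears on the right but can be absorbed, since from $\tilde w_1/N \approx (w - w_0)/N + O(1/N^2) = w_1/N + O(1/N^2)$ we get $|\tilde w_1| \leq |w_1| + 1 \lesssim C$ for $N$ large — more carefully, one should first get a rough bound $|\tilde w_1| \lesssim C$ by a fixed-point / continuity argument on the equation $y_0 \star y_1 = x/N$ before plugging back in, but this is routine. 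Multiplying through by $N$: $|w_0 + \tilde w_1 - w| \lesssim C/N$, hence $|\tilde w_1 - w_1| = |\tilde w_1 - (w - w_0)| \lesssim C/N < \eps$ for $N$ large. Since the angle of $\tilde x_1$ is exactly $\phi - \phi_0 = \phi_1$ (angles add exactly), $|\tilde x_1 - x_1| = |\tilde w_1 - w_1| \lesssim C/N < \eps$, proving (2); and then $|y_1| = |\tilde x_1|/N \leq (|x_1| + \eps)/N < \eps$ for $N$ large, completing (1).

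The main obstacle I anticipate is the mild circularity in the estimate above: \Cref{lemma:star-approx-add} produces $|\tilde w_1|$ on the right-hand side, so I cannot conclude a bound on $\tilde w_1$ without already controlling it. The clean way around this is to observe that the map $y_1 \mapsto y_0 \star y_1$ is, for fixed $y_0$, an affine bijection of $\R^3$ (it is right-translation in the group $(\R^3, \star)$), so the equation $y_0 \star y_1 = x/N$ has a unique solution $y_1$, and as $N \to \infty$ both $y_0 \to 0$ and $x/N \to 0$, so $y_1 \to 0$; making this quantitative gives the a priori bound $|y_1| \lesssim C/N$ (equivalently $|\tilde x_1| \lesssim C$) needed before invoking \Cref{lemma:star-approx-add} to get the sharper estimate. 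Alternatively, one solves \eqref{v12} explicitly for $v$ (the image of $0$) and reads off $\tilde w_1$ in closed form, avoiding the fixed-point discussion entirely; I'd likely present it this second way since the group-theoretic remark is explicitly flagged as not needed in the paper. Everything else is bounded arithmetic in $N$.
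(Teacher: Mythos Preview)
Your approach is correct and essentially identical to the paper's: apply \Cref{lemma:star-approx-add} to $x/N = (x_0/N)\star(\tilde x_1/N)$, multiply by $N$, and use $\tilde\phi_1=\phi_1$ to reduce (2) to $\tilde w_1\to w_1$; then (1) for $j=1$ follows from (2). The one place where you overcomplicate is the circularity with $|\tilde w_1|$ on the right-hand side: the paper dispatches this in one line by writing the inequality as $|\tilde w_1-w_1|\lesssim \tfrac{1}{N}(|\tilde w_1|+C)$, i.e.\ $|\tilde w_1-w_1|=o(|\tilde w_1|)+o(1)$, which immediately forces $\tilde w_1\to w_1$ (absorb the $o(|\tilde w_1|)$ term into the left once the coefficient is below $1/2$). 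Your instinct that it ``can be absorbed'' is exactly right; the fixed-point discussion and the explicit solution of \eqref{v12} are both unnecessary detours.
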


\begin{proof}
Since $y_0 = x_0/N$, property $(1)$ for $j = 0$ is obvious. For $j = 1$, this property follows from $y_1 = \tilde x_1/N$ and from (2).

Let $x = (w, \phi)$, $x_j = (w_j, \phi_j)$, and $\tilde x_1 = (\tilde w_1, \tilde \phi_1)$. To prove (2), it suffices to show that $\tilde w_1\to w_1$ as $N\to\infty$, since $\tilde\phi_1=\phi_1$. If $N$ is large enough, then $\frac{\phi}{N},\frac{\phi_j}{N}\lesssim 1$, so we can apply \Cref{lemma:star-approx-add} for $x/N= (x_0/N) \star (\tilde x_1/N)$ to obtain:
$$|\tilde w_1/N+w_0/N-w/N|\lesssim
\frac{1}{N^2}(|w_0\phi_1|+|\tilde w_1\phi_1|+|w_0\phi_0|+|w\phi|).$$
Therefore 
\begin{align*}
|\tilde w_1-w_1|=|\tilde w_1+w_0-w|
&\lesssim \frac{1}{N}(|w_0\phi_1|+|\tilde w_1\phi_1|+|w_0\phi_0|+|w\phi|)
\\
&\leq
\frac{1}{N}(c_1 + c_2 |\tilde w_1|)
\end{align*}
for some $c_1, c_2$ independent of $N$ (since $w, w_0, w_1, \phi, \phi_0, \phi_1$ do not depend on $N$). Therefore $\tilde w_1\to w_1$ as $N\to\infty$.
\end{proof}

Property (1) allows the polygonal path for $y_0 \star y_1 \star \cdots \star y_0 \star y_1$ to stay within an arbitrarily small neighborhood of the line segment defined by $x$. This is because by decomposing  $x$ into $(x/N)\star\dots\star(x/N)$, we divide the line segment into $N$ equal segments. When we replace each segment by $y_0\star y_1$, we stay in a small neighborhood of it.

\subsection{Iterating the basic zigzag}
\label{sec:iterating-basic-zigzag}

In our proof of \Cref{theorem:translations}, we started from a line segment $L$ and then, iteratively, we replaced each line segment by a Venetian blind; the indices $\ii$ indexed the Venetian blinds. %
However, in this section, we need to focus also on basic zigzags, hence we introduce a new set of indices $\jj$ (finite binary sequences) to index the basic zigzags. For $\jj = j_1 \cdots j_k$, we denote $\jj' = j_1 \cdots j_{k-1}$.

Our construction from \Cref{sec:translations} is an iteration of basic zigzag constructions. That is, we begin with a line segment and replace it a basic zigzag. Then we iterate this by replacing each line segment of our basic zigzag with a basic zigzag. (For this to be an accurate description of our construction from \Cref{sec:translations}, we must use the ``second partition'' from \Cref{remark:Venetian-blind-bad-segments}.)

Given such an iteration of basic zigzags, we can describe it as follows. We start with a line segment $L_\emptyset$, which corresponds to a translation by a vector $v_\emptyset \in \R^2$. In our first basic zigzag, we chose two directions $\theta_0$, $\theta_1$. Then we can uniquely decompose $v_\emptyset = v_0 + v_1$, where $v_j$ is in direction $\theta_j$. If the fineness is $N$, we can represent the basic zigzag as \[v_\emptyset = (v_0/N) + (v_1/N) + \cdots + (v_0/N) + (v_1/N).\] This gives us $N$ copies of the segments $v_0/N$ and $v_1/N$. We set $M_0 = M_1 = N$.

Now suppose we have $M_\jj$ copies of $v_\jj/M_\jj$. To apply a basic zigzag on every copy, we write $v_\jj=v_{\jj 0}+v_{\jj 1}$ and choose a fineness $N_\jj$. Then our basic zigzag is
\[
\frac{v_\jj}{M_\jj}
=
\frac{v_{\jj 0}}{M_\jj N_\jj}+\frac{v_{\jj 1}}{M_\jj N_\jj}
+
\cdots
+
\frac{v_{\jj 0}}{M_\jj N_\jj}+\frac{v_{\jj 1}}{M_\jj N_\jj}.
\]
Here we have $N_\jj$ copies of $\frac{v_{\jj 0}}{M_\jj N_\jj}+\frac{v_{\jj 1}}{M_\jj N_\jj}$ and $M_{\jj 0}=M_{\jj 1}=M_\jj N_\jj$.

We let $L_\jj \subset \R^2$ be the union of the $M_\jj$ congruent and parallel line segments corresponding to the $M_\jj$ copies of $v_\jj/M_\jj$. We let $\theta_\jj$ be the direction of these segments.

\begin{remark}
The vectors $v_\jj$ do not depend on the fineness of the zigzags. Note also that $|v_\jj| = \cH^1(L_\jj)$. 
\end{remark}

\begin{remark}\label{belonging}
As noted earlier, the indices $\jj$ index the  basic zigzag constructions from \Cref{sec:basic-zigzag-lines}, whereas the indices $\ii$ index the Venetian blind constructions from \Cref{sec:venetian}. Note that each $L_\ii$ is a union of $L_\jj$ over some set of indices $\jj$.

Fix some $\ii$ and $\jj$ with $L_\jj \subset L_\ii$. When we apply the Venetian blind construction to $L_{\ii}$, we obtain $L_{\ii 0}$ and $L_{\ii 1}$.  As part of this procedure, we iterate the basic zigzag construction $k$ times on $L_\jj$, where in the $i$th step ($1 \leq i \leq k$), we replace $L_{\jj1^{i-1}}$ with the basic zigzags $L_{\jj1^{i-1}0} \cup L_{\jj1^{i-1}1}$. (Here, $1^i$ denotes a string of $i$ $1$s.) In the end, we obtain
\[
\bigcup_{i=0}^{k-1}
 L_{\jj1^i0} \cup L_{\jj1^k},
\quad\text{with}\quad
\bigcup_{i=0}^{k-1}
 L_{\jj1^i0}
  \subset L_{\ii 0}
\quad\text{and}\quad 
L_{\jj1^k}
\subset L_{\ii 1}
.
\]

For each $i = 0, \ldots, k-1$ we say that the index $\jj1^i0$ is \emph{between $\ii$ and $\ii 0$} and that the index $\jj1^i1$ is \emph{between $\ii$ and $\ii 1$}. (Note that by this definition, if $\ii \neq \emptyset$, then $\jj$ is between $\ii'$ and $\ii$.)
\end{remark}

\medskip

In the paragraphs above, we showed how to construct $\{v_\jj\}$ given an iteration of basic zigzags. Conversely, we could start with a collection $\{v_\jj\}$ satisfying $v_\jj = v_{\jj 0} + v_{\jj 1}$ and turn this into instructions for iterating the basic zigzags. (We would also need to specify the fineness $N_\jj$ at each step.)

The analogue of the above scheme for rotations is the following. Suppose that we are given some points $x_{\bf j}\in\R^3\setminus\{0\}$, where the $\jj$ are finite binary sequences, such that 
$x_{\jj}=x_{\jj 0}+x_{\jj 1}$ for each $\jj$. We also fix a small $r>0$.

In our first step of the construction, we choose a sufficiently large $N$ and choose $y_0$, $y_1$ as in the previous section. That is, we replace $x$ by $N$ copies of $(x_0/N)\star(\tilde x_1/N)$:
$$x=(x_0/N)\star(\tilde x_1/N)\star\dots\star(x_0/N)\star(\tilde x_1/N).$$
We choose $N$ so large that $|\tilde x_1-x_1|<r$. (We can do this by \Cref{lemma:tilde-x1}(2).) We also put $\tilde x=x$, $\tilde x_0=x_0$, and $M_0=M_1=N$. 

Now suppose that we have already chosen $\tilde x_\jj$ and an $M_\jj$ for some sequence $\jj$, and $|\tilde x_\jj-x_\jj|<r$. Then we apply a basic zigzag construction with $x$ replaced by
$\tilde x_\jj/M_\jj$, $x_0$ replaced by $x_{\jj 0}/M_{\jj}$ and $x_1$ replaced by
$(x_{\jj 1}+\tilde x_\jj-x_\jj)/M_\jj$, and with fineness $N_\jj$. That is, we replace $\tilde x_\jj/M_\jj N_\jj=(x_{\jj 0}/M_\jj N_\jj)\star(\tilde x_{\jj 1}/M_\jj N_\jj)$, giving us
$$\frac{\tilde x_\jj}{M_\jj}
=
\left(\frac{x_{\jj 0}}{M_\jj N_\jj}\right)\star\left(\frac{\tilde x_{\jj 1}}{M_\jj N_\jj}\right)
\star\dots\star 
\left(\frac{x_{\jj 0}}{M_\jj N_\jj}\right)\star\left(\frac{\tilde x_{\jj 1}}{M_\jj N_\jj}\right).$$
If $N_\jj$ is very large, then $\tilde x_{\jj 1}/M_\jj$ will be very close to $(x_{\jj 1}+\tilde x_\jj-x_\jj)/M_\jj$, which means that $\tilde x_{\jj 1}$ will be very close to $x_{\jj 1}+\tilde x_\jj-x_\jj$. Therefore, by choosing $N_\jj$ large enough, $|\tilde x_{\jj 1}-x_{\jj 1}|<r$ holds. We put $\tilde x_{\jj 0}=x_{\jj 0}$ and $M_{\jj 0}=M_{\jj 1}=M_\jj N_\jj$.

Using this procedure, we obtain an $\tilde x_\jj$ and an $M_\jj$ for each $\jj$, such that $|\tilde x_\jj-x_\jj|<r$, and for $y_\jj:=\tilde x_\jj/M_\jj$:
$$y_\jj=y_{\jj 0}\star y_{\jj 1}\star\dots\star y_{\jj 0}\star y_{\jj 1}$$
(where we have $N_\jj$ copies of $y_{\jj 0}\star y_{\jj 1}$).

In this way, we have shown how to take a collection $\{x_\jj\}$ with $x_\jj = x_{\jj 0} + x_{\jj 1}$, together with fineness $N_\jj$, and turn this data into a sequence of rotations, the composition of which is the original rotation $\rho(x)$.

\begin{remark}
For translations, the sequence $\{v_\jj\}$ tells us every direction we will translate in, even before the fineness $N_\jj$ are chosen. However, for rotations, the sequence $\{x_\jj\}$ alone does not tell us the projective centers of the rotations we will use. The centers are given by $\{\tilde x_\jj\}$, which depend on $N_\jj$. The $N_\jj$ in turn depend on $\{x_\jj\}$ and $r$ (in the way explained above) as well as on the area estimates in the following sections.
\end{remark}

\subsection{Turning the translations into rotations}
\label{sec:translations-into-rotations}

In the previous section, we showed how to turn a collection $\{x_\jj\}$ into a sequence of rotations, but we did not say which sequence $\{x_\jj\}$ to start with. We specify that now. The construction of $\{x_\jj\}$ is actually very simple: we use a rotation in $\R^3$ to ``transform'' a sequence of vectors $\{v_\jj\}$ in $\R^2$  into our desired sequence $\{x_\jj\}$.

Let $\rho(x)$ and $\eps$ be as in the statement of \Cref{theorem:rotations-on-line}.  
Then we can apply the results of \Cref{sec:second-key-idea-rot} to $E$; let $c$ be the constant in \Cref{lemma:luzin-rot} and \Cref{var2}. Without loss of generality we can assume that $\eps$ is small enough, so that the conclusion of \Cref{var2} holds for every $\delta<\eps$. 

Let $v \in \R^2$ be an arbitrary vector with $|v|=|x|$. 
We can follow the steps in \Cref{sec:translations} to construct the vectors $v_\jj$ with $v_\emptyset = v$ as well as the stopping time.

Our aim is to ``turn'' the sequence $\{v_\jj\}$ into a sequence of rotations. Let $Q:\R^3 \to \R^3$ be a linear rotation that maps $(v,0)$ to $x$, and that maps the plane $\phi=0$ (i.e., those $x=(w,\phi) \in  \R^3$ for which $\rho(x)$ is a translation) onto the plane of $\ell$ (i.e., those $x=(w,\phi) \in \R^3$ for which the projective center of $\rho(x)$ lies in $\ell$). We define $x_\jj:=Q(v_\jj,0)$ for each $\jj$. Since $Q$ is linear, we do indeed have 
$x_\jj=x_{\jj 0}+x_{\jj 1}$.

We denote by $z_\jj$ the projective image of $x_\jj$ onto $\P^2$. Then $z_\jj\in\ell$. A trivial but very important property we have is this: since $Q$ is an isometry, the distance between any two $z_\jj$ is the same as the angle between the corresponding vectors $v_\jj$. If $\jj$ is between $\ii'$ and $\ii$ (see \Cref{belonging}), we denote $\alpha_\jj:= \alpha_\ii$ and let 
\begin{equation}
\label{eq:def-B-jj}
B_\jj = B(z_\jj, 2\alpha_\jj) \subset \P^2. 
\end{equation}

\begin{remark}
\label{remark:ball-contains-interval}
Suppose $\jj$ is between $\ii$ and $\ii'$. If $\ii$ is good, then the ball $B_\jj$ contains $[z_\jj,z_{\jj'}]\subset \ell$, which is the image of $[\theta_\jj, \theta_{\jj'}] \subset \P^1$ under the rotation $Q$. If $\ii$ is bad, then $B_\jj$ contains $[z_\jj, z_{\ii'}] = [z_\ii, z_{\ii'}] \subset \ell$.
\end{remark}

So far, none of the objects we defined depend on the fineness of the zigzags; they depend only on $E$, $\ell$, $\rho(x)$ and $\eps$.

Now we use the basic zigzag iteration process in \Cref{sec:iterating-basic-zigzag} to obtain $\{\tilde x_\jj\}$ with $N_\jj$ large enough (that we will specify in the next section). We denote the projective center of the rotations by $\tilde z_\jj$. That is, $\tilde z_\jj$ is the image of $y_\jj$ (which is the same as the image of $\tilde x_\jj$) under the projection $\R^3\setminus\{0\}\to\P^2$. We will also denote $x_\jj=(w_\jj,\phi_\jj)$ and $\tilde x_\jj=(\tilde w_\jj,\phi_\jj)$. (Caution: we do not use the notation $v_\jj$ as in \eqref{v}. Instead, the $v_\jj$ satisfy $x_\jj = Q(v_\jj, 0)$.)

\medskip
Recall from \Cref{sec:iterating-basic-zigzag} that $\tilde x_\jj \in B(x_\jj, r) \subset \R^3$, where we can choose $r$ as small as we wish. We choose $r$ small enough so that $r \leq \frac{1}{2} \min_{\jj} |x_\jj|$ and so that for each $\jj$, the image of $B(x_\jj,r)$ under the projection $\R^3\setminus\{0\}\to\P^2$ is contained in $B_\jj \cap B(\ell, \epsilon)$. It follows that $|\tilde x_\jj| \lesssim |x_\jj|$ and $\tilde z_\jj \in B(z_\jj,2\alpha_\jj) \cap B(\ell, \epsilon)$ for each $\jj$.

In the end, we have two polygonal paths. One is  $P = \bigcup_\jj L_\jj \subset \R^2$, corresponding to $\{v_\jj\}$; the other is $\tilde P = \bigcup_\jj \tilde L_\jj \subset \Isom^+(\R^2)$, corresponding to $\{x_\jj\}$. In both cases, we use the same fineness $N_\jj$ (still to be specified). (We also have the same stopping time since that is encoded in the sequences $\{v_\jj\}$, $\{x_\jj\}$.)

Thus, $Q$ ``transforms'' a polygonal path $P = \bigcup_\jj L_\jj \subset \R^2$ into a polygonal path $\tilde P = \bigcup_\jj \tilde L_\jj \subset \Isom^+(\R^2)$ by ``transforming'' $L_\jj$ into $\tilde L_\jj$. Our next aim is to turn the estimates for $P$ we obtained in \Cref{sec:translations} into estimates for $\tilde P$.

\subsection{Ignoring small parts of $E$ and of $\tilde P$}

Recall the definition of the intervals $I_\ii$, the elementary intervals $I$, and the sets $E_I$ from \Cref{sec:translations}. Because of the rotation $Q$, the relevant objects are now $J_\ii:=QI_\ii$, $J:=QI\subset \ell$, and $E_J:=\{x\in E:\,\nu_x \cap J \neq \emptyset\}$.

We made the sets $E_I$ compact by ``ignoring'' a sufficiently small subset of $E$. Since we knew the length of the final polygon $P$ (this depended on the stopping time, but \emph{not} on the fineness of the zigzags) we also knew from \Cref{lemma:luzin} that during our movement, small enough subsets of $E$ will automatically cover small area. By the same reason, we could also ``ignore'' those $L_\ii$ for which $\cH^1(L_{\ii})\le\eps_\kk$, where $\kk$ is the last good sequence among $\ii$ and its ancestors. 

We now obtain the analogue estimates for rotations, by applying \Cref{lemma:luzin-rot} in place of  \Cref{lemma:luzin}. Indeed, since $|\tilde x_\jj| \lesssim  |x_\jj| = |v_\jj| = \cH^1(L_\jj)$ for each $\jj$, therefore every subset $R\subset E$ will cover, during the movement by $\tilde P$, an area $\lesssim c\cH^1(R)\sum_\jj|\tilde x_\jj|\lesssim c\cH^1(R)\sum_\jj\cH^1(L_\jj)= c\cH^1(R)\cH^1(P)$, where the sums are over all $\jj$ with $L_\jj \subset P$ (or, equivalently, $\tilde L_\jj \subset \tilde P$). That is, we obtain a $c$ times larger estimate than in  \Cref{lemma:luzin}. Similarly, when we move any $R$ by $\tilde L_\ii$, we cover an area at most $c\cH^1(R)|\tilde x_\ii|\lesssim c \cH^1(R)\cH^1(L_\ii)$ instead of $\cH^1(R)\cH^1(L_\ii)$.

Since $Q$ is a rotation, $|J_\ii| = |I_\ii|$. Similarly as in \Cref{sec:translations}, for each line segment $\tilde L \subset \tilde L_{\ii}$ appearing in the final polygon $\tilde P$, we choose $B(u_i, \eps)$ of \Cref{theorem:rotations-on-line} so that $\ell \setminus B(u_i, \eps)\subset J_\ii$ whenever $|J_\ii|=|I_\ii|\ge \pi-\eps$. If $|J_\ii|< \pi-\eps$, we can choose $B(u_i, \eps)$ arbitrarily.

\subsection{Area estimates}

For each $J$, let $A_J$ denote the set covered by moving $E_J$ along those $\tilde L_\ii \subset \tilde P$ for which $J\subset J_\ii$ (cf.\ \eqref{4.7}). Our final goal is to show $\sum_J |A_J| < \tilde c \eps$, for some $\tilde c$ independent of $\eps$. This would imply that \eqref{eq:theorem-rotations} holds with $\eps$ replaced by $\tilde c\eps$ in its right hand side. 

First we prove the following analogue of \Cref{lemma:lemm}. 

\begin{lemma}
\label{lemma:lemm-rot}
By making the basic zigzags sufficiently fine, we can achieve the following: if $J$ is an elementary interval contained in $[z_\ii, z_{\ii'}]$, then the area covered by moving $E_{J}$ along $\tilde L_\ii$ is $\lesssim c\alpha_\ii \cH^1(E_{J}) \cH^1(L_{\ii'})$.
\end{lemma}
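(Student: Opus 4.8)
The statement is the rotation-analogue of \Cref{lemma:lemm}, so the plan is to mimic the translation proof but replace each ingredient by its rotational counterpart from \Cref{sec:second-key-idea-rot} and to pay the price of the $\tilde x_\jj \ne x_\jj$ discrepancy using the small neighborhood lemma. First I would unwind the hypothesis: if $J \subset [z_\ii,z_{\ii'}]$, then for each $\jj$ between $\ii'$ and $\ii$, \Cref{remark:ball-contains-interval} gives $J \subset B_\jj = B(z_\jj, 2\alpha_\jj)$, and since the projective centers $\tilde z_\jj$ satisfy $\tilde z_\jj \in B(z_\jj, 2\alpha_\jj)$ as well, we get $J \subset B(\tilde z_\jj, 4\alpha_\jj)$, say, or more simply a ball of radius $\lesssim \alpha_\jj$ around $\tilde z_\jj$. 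Hence for every $x \in E_J$ the normal line $\nu_x$ meets a $\lesssim\alpha_\jj$-neighborhood of the projective center of the rotation $\rho(y_\jj)$.

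\textbf{Key steps.} With that in hand, apply \Cref{var2} with $\delta \asymp \alpha_\jj$ (permissible since $\alpha_\jj \le \beta_\emptyset$ is small and $E$ is bounded, so the conclusion of \Cref{var2} holds): rotating $E_J$ by a single copy of $\rho(y_\jj)$ covers area $\lesssim c\,\alpha_\jj\,\cH^1(E_J)\,|y_\jj|$. There are $M_\jj$ such copies in $\tilde L_\jj$, and $M_\jj|y_\jj| = M_\jj|\tilde x_\jj|/M_\jj = |\tilde x_\jj| \lesssim |x_\jj| = |v_\jj| = \cH^1(L_\jj)$, so moving $E_J$ along $\tilde L_\jj$ covers area $\lesssim c\,\alpha_\jj\,\cH^1(E_J)\,\cH^1(L_\jj)$. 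Now I would sum over all $\jj$ between $\ii'$ and $\ii$. These $\jj$ are exactly the basic-zigzag pieces created inside the Venetian-blind construction starting from $L_{\ii'}$ whose segments end up (after the full iteration) in $\tilde L_\ii$; by \Cref{remark:Venetian-blind-lengths}/\eqref{ratt} their $\R^2$-lengths satisfy $\sum_{\jj} \cH^1(L_\jj) \lesssim \cH^1(L_{\ii'})$ (each level of the blind has total length $\le \cH^1(L_{\ii'})$, and after summing the two-direction geometric-type bound this is still $\lesssim \cH^1(L_{\ii'})$, exactly as in the derivation of \Cref{lemma:lemm}). Also $\alpha_\jj = \alpha_\ii$ for all these $\jj$ by definition. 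This yields area $\lesssim c\,\alpha_\ii\,\cH^1(E_J)\,\cH^1(L_{\ii'})$ for the union of the $\tilde L_\jj$.

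\textbf{The small-neighborhood step.} The one subtlety — and the main obstacle — is that $\tilde L_\ii$ is not literally $\bigcup_{\jj}\tilde L_\jj$: the intrinsic compositions $\star$ do not agree with $+$, so the pieces $\tilde L_\jj$ do not concatenate exactly into $\tilde L_\ii$, just as in \Cref{sec:translations} the fine zigzags only \emph{approximate} their parent segments. The resolution is the same as there: \Cref{lemma:tilde-x1} guarantees that by choosing each fineness $N_\jj$ large enough, each $\tilde L_{\jj 1}$ stays in an arbitrarily small $\Isom^+(\R^2)$-neighborhood of the segment it replaces, so that after all refinements $\tilde L_\ii$ lies in an arbitrarily small neighborhood $U$ of $\bigcup_\jj \tilde L_\jj$. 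By the small neighborhood lemma \Cref{lemma:small-nbhd-small} applied to the compact set $E_J$, the area covered by moving $E_J$ along $\tilde L_\ii$ is within $\eta$ of the area covered along $\bigcup_\jj \tilde L_\jj$, and we can take $\eta$ as small as we like — in particular $\lesssim c\,\alpha_\ii\,\cH^1(E_J)\,\cH^1(L_{\ii'})$. This is the meaning of the phrase ``by making the basic zigzags sufficiently fine'' in the statement. Combining, the area covered by moving $E_J$ along $\tilde L_\ii$ is $\lesssim c\,\alpha_\ii\,\cH^1(E_J)\,\cH^1(L_{\ii'})$, as claimed. I would be careful to note that only finitely many elementary intervals $J$ and finitely many $\ii$ occur, so finitely many fineness requirements must be met simultaneously, which is fine.
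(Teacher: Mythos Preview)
Your argument conflates the good and bad cases and, as written, only works for the bad one. The claim ``for each $\jj$ between $\ii'$ and $\ii$, \Cref{remark:ball-contains-interval} gives $J \subset B_\jj$'' is exactly what that remark says when $\ii$ is bad, but when $\ii$ is good it only guarantees $B_\jj \supset [z_\jj, z_{\jj'}]$, a single step of length $\gamma_{\ii'}$ inside the long interval $[z_\ii, z_{\ii'}]$. An elementary interval $J \subset [z_\ii, z_{\ii'}]$ lies in just \emph{one} of these short steps, so \Cref{var2} with $\delta \asymp \alpha_\jj$ applies for that single $\jj$, not for all of them. Consequently the summation strategy fails in the good case; and even if the var2 bound held for every $\jj$, the sum $\sum_\jj \cH^1(L_\jj) = \sum_{j=1}^k \cH^1(G_j)$ has $k \asymp 1/\gamma_{\ii'}$ terms each of size $\le \cH^1(L_{\ii'})$, so you would only get $\lesssim \cH^1(L_{\ii'})$ rather than the required $\alpha_\ii \cH^1(L_{\ii'}) = \gamma_{\ii'}\cH^1(L_{\ii'})$.

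The paper's proof splits into the two cases. For good $\ii$ it selects the one $\jj$ with $J \subset [z_\jj, z_{\jj'}]$, obtains the bound for $\tilde L_\jj$, and then uses the fineness/small-neighborhood lemma to pass to the \emph{descendant} $\tilde L_\ii$ (since $L_\ii = G_k$ descends from $L_\jj = G_j$), together with $\cH^1(L_\jj) \le \cH^1(L_{\ii'})$. For bad $\ii$ it does sum over all $\jj$ between $\ii'$ and $\ii$, but there one has literally $\tilde L_\ii = \bigcup_\jj \tilde L_\jj$ and $\sum_\jj \cH^1(L_\jj) = \cH^1(L_\ii) \le \cH^1(L_{\ii'})$ --- your worry that ``$\tilde L_\ii$ is not literally $\bigcup_\jj \tilde L_\jj$'' because $\star \ne +$ is misplaced: the discrepancy between $\star$ and $+$ has already been absorbed into the definition of $\tilde x_\jj$, and the pieces do concatenate exactly. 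So the small-neighborhood step is needed in the good case (to descend from one $\tilde L_\jj$ to $\tilde L_\ii$), not in the bad case.
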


\begin{proof}
Suppose $J$ is an elementary interval contained in $[z_\ii, z_{\ii'}]$. If $\ii$ is good, then there is a $\jj$ between $\ii$ and $\ii'$ such that $ J \subset [z_\jj, z_{\jj'}]$. If $\ii$ is bad, then for all $\jj$ between $\ii$ and $\ii'$, $J \subset [z_\jj, z_{\jj'}]$.

Suppose that  $J \subset [z_\jj, z_{\jj'}]$. Applying \Cref{var2} with $\rho=\rho(y_\jj)$ and $R = E_{J}$ (noting \Cref{remark:ball-contains-interval}), we see that if we move $E_{J}$ by the rotation $\rho(y_\jj)$, the area covered is 
$\lesssim c\alpha_\jj\cH^1(E_J)|\tilde x_\jj|/M_\jj$.
Hence the total area covered by moving $E_{J}$ by all $M_\jj$ copies of $\rho(y_\jj)$ is $\lesssim c\alpha_\jj\cH^1(E_J)|\tilde x_\jj| \lesssim c\alpha_\jj\cH^1(E_J) \cH^1(L_\jj)$.
We make the zigzags so fine in our constructions that the same estimate 
\begin{equation}
\label{eq:EQI-Ljj}\lesssim c\alpha_\jj\cH^1(E_J)\cH^1(L_\jj)
\end{equation}
remains true when we rotate the set $E_J$ by the descendants of the $M_\jj$ copies of $y_\jj$.

Now, we break into two cases. If $\ii$ is good, then $L_\ii$ descends from $L_\jj$, so the statement of the lemma follows from $\cH^1(L_\jj) \leq  \cH^1(L_{\ii'})$.

If $\ii$ is bad, we use the fact that $L_{\ii} = \bigcup_\jj L_\jj$ and $\tilde L_{\ii} = \bigcup_\jj \tilde L_\jj$, where the unions are over all $\jj$ between $\ii$ and $\ii'$. Then summing over the estimate \eqref{eq:EQI-Ljj} for each such $\jj$, we have that moving along $\tilde L_{\ii}$, the area is 
\[
\lesssim
c \alpha_\ii \cH^1(E_{J})
\sum_\jj
 \cH^1(L_\jj)
=
c \alpha_\ii \cH^1(E_{J})
\cH^1(L_\ii)
\leq
c \alpha_\ii \cH^1(E_{J})
\cH^1(L_{\ii'})
\]
which completes the proof.
\end{proof}

Having established this estimate, the proof continues in the same way as in \Cref{sec:translations}, to obtain $|A_{J}| \lesssim 2c \sum_{\ii} \eps_\ii \cH^1(E_{J})$, the analogue of \eqref{eq:AI-bound}. We explain some details below.

Consider an $\tilde L_\ii\subset \tilde P$ with $J\subset J_\ii$. Since the intervals $J_\ii,J_{\ii'},J_{\ii''},\dots$ are decreasing, there is a $\kk$ among $\ii$ and its ancestors such that $J\subset J_\kk\setminus J_{\kk'}\subset[z_\kk,z_{\kk'}]$. By \Cref{lemma:lemm-rot}, the total area covered when we move $E_J$ along $\tilde L_{\kk}$ is 
\begin{equation}\label{l''-rot}
\lesssim c\alpha_\kk \cH^1(E_J)\cH^1(L_{\kk'}).
\end{equation}

By making the zigzags sufficiently fine, the same estimate remains true when we move $E_J$ along all the descendants of $\tilde L_\kk$ in $\tilde P$.

Therefore, similarly as in section 4, the area of $A_J$ can be estimated by summing the estimate \eqref{l''-rot} for those ancestors that are on different family lines. Let $\kk_1,\kk_2,\dots$ be arbitrary sequences from different family lines.

We distinguish two cases: if $\kk_m$ is good, then
\begin{equation}\label{1-rot}
\alpha_{\kk_m}\cH^1(E_J)\cH^1(L_{{\kk_m}'})
\le\eps_{{\kk_m}'}\cH^1(E_J).
\end{equation}
With the bad $\kk_m$, because of the different family lines condition, each bad $\kk_m$ has a different ``last good among $\kk_m$ and its ancestors'' so
\begin{equation}\label{2-rot}
\sum_{\kk_m \text{ is bad}} \alpha_{\kk_m}\cH^1(E_J)\cH^1(L_{{\kk_m}'})
\le\sum_{\kk}\eps_{\kk}\cH^1(E_J),
\end{equation}
where the summation on the right is taken over all ${\kk}$. Adding together the estimates \eqref{1-rot} for all good $\kk_m$ and \eqref{2-rot}, we proved that
\begin{equation}
\label{eq:AJ-bound}
|A_J|\lesssim 2c\sum_{\kk}\eps_{\kk}\cH^1(E_J).\end{equation}

Let $c'$ be the implied constant in \eqref{eq:AJ-bound}. Since each $x\in E$ belongs to at most two of the sets $E_J$, we proved that $\sum_J|A_J|$ is at most $cc'$ times larger than the bound of $\eps$ for $\sum_I |A_I|$ that we obtained in \Cref{sec:translations}. In other words, we showed $\sum_J|A_J| < cc'\eps$. The constant $cc'$ depends only $\ell$ and $E$ (and not on $\eps$). This completes the proof.

\subsection{Further remarks}

\begin{remark} 
\label{remark:small-nbhd-initial-mvmt}
In both \Cref{sec:translations} and \Cref{sec:rotations}, we constructed a polygonal path that replaced a continuous movement with a fixed intrinsic projective center by a sequence of intrinsic rotations. By choosing all the zigzags sufficiently fine in our constructions, we can stay in an arbitrarily small neighborhood of the initial movement in $\Isom^+(\R^2)$. 
\end{remark}

\begin{remark} 
\label{remark:z-not-in-closure-ball}
It is possible to choose the $u_i$ in \Cref{theorem:rotations-on-line} so that $z$, the initial center of rotation, is not in any of the closed balls $\cl B(u_i, \eps)$. 

By applying \Cref{var2} to the initial rotation $\rho$ and a sufficiently small ball $B(z,\eta)$, we see that rotating the set $R = \{ x \in E : \nu_x \cap \ell \cap B(z, \eta)\}$ by $\rho$ covers small area. By making the zigzags sufficiently fine and using the small neighborhood lemma, the set $R$ still covers small area when moved by the final polygonal path. Thus, \eqref{eq:theorem-rotations} holds with $B(u_i,\eps)$ replaced by $B(u_i,\eps)\setminus B(z,\eta)$, so we can reselect the $u_i$ so that $z \not\in \cl B(u_i, \eps)$.

This property will be used in the proof of \Cref{theorem:besicovitch-rot}.
\end{remark}

\section{Besicovitch and Nikodym sets}
\label{section:besi-niko}

We conclude this paper by showing that when we iterate the polygonal constructions in \Cref{sec:translations} and \Cref{sec:rotations} and ``take the limit,'' we obtain the analogues of Besicovitch and Nikodym sets for rectifiable sets.

\subsection{Construction of a Besicovitch set for translations}
\label{subsec:besicovitch-translations}

We start with the following, somewhat technical conditions. Afterwards, we will discuss some interesting special cases.

Suppose that we are given some rectifiable sets $E_1\subset E_2\subset\dots$, and a tangent field $x\mapsto\theta_x$ of $\bigcup E_n$, satisfying the following:
\begin{enumerate}
\item each $E_n$ is compact, and has finite $\cH^1$-measure;
\item \label{item:besicovitch-E-n-prime} each $E_n$ has a subset $E_n'$ of full $\cH^1$-measure, such that the restriction of the tangent $\theta$ to $E_n'$ is continuous, and for each $y\in E_n$, 
\begin{equation}\label{thetay}
\theta_y\in\bigcap_{r>0}\cl(\theta(B(y,r)\cap E_n')).\end{equation}
\end{enumerate}

We will prove the following proposition:
\begin{proposition}\label{propo}
Suppose that the sets $E_n$ satisfy the assumptions above. Let $P_0$ be an arbitrary path in $\R^2$. Then for any neighborhood of $P_0$, there is a path $P$ in this neighborhood with the same endpoints as $P_0$, and there is a Borel mapping $p\mapsto\theta_p\in\P^1$ such that
\begin{equation}\label{pep}
|\bigcup_{p\in P} (p+\{x\in \bigcup E_n:\,\theta_x\neq \theta_p\}|=0.
\end{equation}
\end{proposition}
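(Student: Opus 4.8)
The plan is to iterate \Cref{theorem:translations} infinitely often, taking ever-finer $\epsilon_m \to 0$, and extract a limiting path. First I would fix a sequence $\epsilon_m \downarrow 0$ (to be specified during the construction) and a sequence of nested shrinking neighborhoods $U_0 \supset U_1 \supset \cdots$ of $P_0$, with $U_0$ the given neighborhood. Starting from $P^{(0)} := P_0$, I would build paths $P^{(m)}$ with the same endpoints as $P_0$ such that $P^{(m+1)}$ lies in a small neighborhood of $P^{(m)}$, all inside $U_0$, and is obtained by applying \Cref{theorem:translations} to $E_m$: more precisely, I would apply \Cref{theorem:translations} to each line segment of $P^{(m)}$ (connecting its endpoints by the polygonal path guaranteed by the theorem), with parameter small enough that the whole path $P^{(m+1)}$ stays in the chosen small neighborhood of $P^{(m)}$. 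The key point is that at step $m$ the theorem supplies, for each segment $L_i$ of $P^{(m+1)}$, a direction $\theta_i \in \P^1$ such that translating $\{x \in E_m : \theta_x \notin B(\theta_i, \epsilon_m)\}$ along $L_i$ covers area $< \epsilon_m$ in total.

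The convergence step: since the $P^{(m)}$ form a Cauchy sequence of paths (in, say, the uniform metric on parametrized paths, shrinking the neighborhoods fast enough), they converge uniformly to a path $P$ with the same endpoints as $P_0$, lying in the given neighborhood. To define $\theta_p$ for $p \in P$: each point $p$ of the limit path is a uniform limit of points $p^{(m)}$ on the segments $L_i^{(m)}$ of $P^{(m)}$, each of which carries an assigned direction $\theta_i^{(m)}$; I would take $\theta_p$ to be a limit (or a suitable Borel selection of a limit point) of these $\theta_i^{(m)}$. The Borel measurability of $p \mapsto \theta_p$ comes from the fact that each $P^{(m)}$ is a finite union of segments with locally constant assigned directions, so the approximants are Borel, and a pointwise limit of Borel maps is Borel; where no genuine limit exists one uses a fixed Borel selection (e.g. $\limsup$ in a chart), which only affects a set we can absorb.

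The area estimate: to see \eqref{pep}, I would show that for each fixed $n$, the set $\bigcup_{p \in P}(p + \{x \in E_n : \theta_x \neq \theta_p\})$ has measure zero; taking the countable union over $n$ then finishes. Fix $n$ and $m \geq n$, so $E_n \subset E_m$. Using the small neighborhood lemma (\Cref{lemma:small-nbhd-small}) — which requires compactness of $E_n$, supplied by hypothesis — the set covered by translating, along the descendants of $P^{(m+1)}$ (including the limit $P$), the ``bad'' part $\{x \in E_n : \theta_x \notin B(\theta_i^{(m)}, \epsilon_m)\}$ relative to segment $L_i^{(m+1)}$, is within $\epsilon_m$ of the area covered along $P^{(m+1)}$ itself, provided we made all later neighborhoods small enough (this is exactly where we get to choose how fine the later constructions must be). So the ``$\theta_x$ far from $\theta_p$'' part of $E_n$ sweeps out, along $P$, a set of measure $\lesssim \epsilon_m \to 0$; the condition \eqref{thetay} relating $\theta_y$ to nearby tangent values on the full-measure continuity set $E_n'$ is what guarantees that $\theta_x \neq \theta_p$ forces $\theta_x$ to be bounded away from $\theta_i^{(m)}$ for large $m$ — i.e., that the points we actually need to delete are eventually captured by the ``bad'' sets — so the measure-zero conclusion for the limit deletion set $\{x : \theta_x \neq \theta_p\}$ follows by letting $m \to \infty$.

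\textbf{Main obstacle.} I expect the delicate point to be the bookkeeping that makes the limit $\theta_p$ well-defined and compatible with the deletion: one must arrange that for a point $p \in P$ obtained as a limit of $p^{(m)} \in L_{i(m)}^{(m)}$, the directions $\theta_{i(m)}^{(m)}$ converge (or at least that their limit set is controlled), and that any $x \in E_n$ with $\theta_x \neq \theta_p$ is, for all sufficiently large $m$, in the bad set $\{\theta_x \notin B(\theta_{i(m)}^{(m)}, \epsilon_m)\}$. This is where \eqref{thetay} and the continuity of $\theta$ on $E_n'$ are essential, together with a diagonalization over $n$ and $m$ to choose $\epsilon_m$ and the nested neighborhoods simultaneously. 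The small neighborhood lemma does the rest of the heavy lifting, turning ``close paths'' into ``close covered areas.''
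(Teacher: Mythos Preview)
Your proposal is correct and follows essentially the same approach as the paper: iterate \Cref{theorem:translations} on successive $E_n'$ with $\eps_n\to 0$, use the small neighborhood lemma to propagate area estimates to all descendants and hence to the limit path $P$, and define $\theta_p$ from the nested balls $\cl B(\theta_{i(p,n)}^n,\eps_n)$, invoking \eqref{thetay} to show $\{x:\theta_x\neq\theta_p\}\subset E_p:=\limsup_n E_{i(p,n)}^n$. The one technical point the paper makes explicit that you gloss over is that the small neighborhood lemma needs the \emph{moved} set to be compact, so the bad sets are taken to be the \emph{closures} $\cl\{x\in E_n':\theta_x\notin B(\theta_i^n,\eps_n)\}$---continuity of $\theta$ on $E_n'$ ensures this closure adds only an $\cH^1$-null set, so the area estimate survives.
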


\begin{proof}
Given any neighborhood of $P_0$, let $P^0$ be a polygonal path in this neighborhood with the same endpoints as $P_0$. For each $n$, we choose an $\eps_n>0$ with $\sum\eps_n<\infty$. Then iteratively, for each $n\ge 1$ we apply \Cref{theorem:translations} to each segment $L \subset P^{n-1}$ with $E$ replaced by $E_n'$ and $\eps$ replaced by some $\eps_L > 0$ such that $\sum_{L \subset P^{n-1}} \eps_L < \eps_n$. This gives us a polygonal path $P^n=\bigcup_i L_i^n$ and directions $\theta_i^n$ such that
\begin{equation}\label{pn}
|\bigcup_i\bigcup_{p\in L_i^n}(p+E_i^n)|<\eps_n,
\end{equation}
where 
\begin{equation}\label{Ei}
E_i^n:= \cl{\{x\in E_n':\,\theta_x\not\in B(\theta_i^n,\eps_n)\}}.
\end{equation}

Although \Cref{theorem:translations} gives us the sets $E_i^n$ without their closure, we can take the closure in \eqref{Ei} since, by our assumptions, doing so does not increases their measure. (In particular, by assumption \eqref{item:besicovitch-E-n-prime}, we have $E_i^n \setminus \{x\in E_n':\,\theta_x\not\in B(\theta_i^n,\eps_n)\} \subset E_n \setminus E_n'$ and $\cH^1(E_n \setminus E_n') = 0$.) We know that moving an $\cH^1$-null set along a polygonal path covers only zero area, so indeed, \eqref{pn} holds. 

We construct $P^{n+1}$ by replacing each line segment $L_i^n$ of $P^n$ by a polygonal path that stays in such a small neighborhood of $L_i^n$ that the area estimate in \eqref{pn} remains true when, instead of $L_i^n$, we shift the sets $E_i^n$ along the line segments that we replace $L_i^n$ with. (Here we used \Cref{remark:small-nbhd-initial-mvmt} and that the sets $E_i^n$ are compact.)  

Also, we choose the neighborhoods small enough so that the polygonal paths $P^n$ converge to a continuous limit curve $P$. For each $p\in P$, and for each fixed $n$, we have an $i=i(p,n)$ such that $$|\bigcup_{p\in P}(p+E_{i(p,n)}^n)|<\eps_n$$ holds.
We denote 
\begin{equation}
\label{eq:def-Ep}
E^p:=\limsup_{n\to\infty}E_{i(p,n)}^n.
\end{equation} 
Then
\[
|\bigcup_{p\in P}(p+E^p)|
\le 
|\bigcup_{p\in P}(p+\bigcup_{m\ge n}E_{i(p,m)}^m)|
= 
|\bigcup_{m\ge n}\bigcup_{p\in P}(p+E_{i(p,m)}^m)|
\le
\sum_{m\ge n}\eps_m.
\]
Since this is true for every $n$, it follows that $\bigcup_{p\in P}(p+E^p)$ is Lebesgue null.

\medskip
By the definition \eqref{Ei}, if a point $y\in\bigcup E_n$ does not belong to $E^p$, then for every large enough $n$, it has a neighborhood disjoint from $\{x\in E_n':\,\theta_x\not\in B(\theta_i^n,\eps_n)\}$. That is, there is an $r>0$ such that $\theta_x\in B(\theta_i^n,\eps_n)$ for every $x\in B(y,r)\cap E_n'$. Hence, by our assumption \eqref{thetay}, $\theta_y\in\cl (\theta(B(y,r)\cap E_n'))\subset \cl B(\theta_i^n,\eps_n)$. That is, $\theta_y$ is in $\liminf_{n\to\infty}\cl B(\theta_{i(p,n)}^n,\eps_n)$, which has at most one point. For $p \in P$, if this set has one point, then we let $\theta_p$ denote that point. Otherwise, we let $\theta_p$ be arbitrary. 

Then for each $p$, $\{x\in \bigcup E_n:\,\theta_x \neq \theta_p\} \subset E^p$, and the proof is finished.
\end{proof}

For every rectifiable set $E$, we can choose the sets $E_n = E_n'$ such that they satisfy the requirements at the beginning of this section, and such that $\bigcup E_n$ is a subset of $E$ of full $\cH^1$-measure.
Therefore we obtain the following theorem:

\begin{theorem}[Besicovitch set for translations]
\label{theorem:besicovitch}
Let $E$ be an arbitrary rectifiable set, and let $x\mapsto\theta_x$ be an arbitrary tangent field of $E$. Then there is an $E_0\subset E$ of full $\cH^1$-measure in $E$ for which the following holds. 

For every path $P_0$ in $\R^2$, and for any neighborhood of $P_0$, there is a path $P$ in this neighborhood with the same endpoints as $P_0$, and there is a Borel mapping $p\mapsto\theta_p\in\P^1$ such that
\begin{equation}
|\bigcup_{p\in P} (p+\{x\in E_0:\,\theta_x\neq \theta_p\}|=0.
\end{equation}
\end{theorem}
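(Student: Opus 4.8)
The plan is to deduce \Cref{theorem:besicovitch} from \Cref{propo} by exhibiting, for an arbitrary rectifiable set $E$ with an arbitrary tangent field $x\mapsto\theta_x$, an increasing sequence of compact sets $E_n=E_n'$ whose union $E_0:=\bigcup E_n$ has full $\cH^1$-measure in $E$ and which satisfies the two bullet-point hypotheses of \Cref{propo}: each $E_n$ is compact of finite $\cH^1$-measure, the tangent $\theta$ restricted to each $E_n$ is \emph{continuous}, and the ``approachability'' condition \eqref{thetay} holds for every $y\in E_n$. Once such $E_n$ are produced, \Cref{propo} applied to this sequence gives exactly the path $P$ and the Borel map $p\mapsto\theta_p$ with $|\bigcup_{p\in P}(p+\{x\in E_0:\theta_x\neq\theta_p\})|=0$, which is the assertion of the theorem.

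The construction of the $E_n$ is where the work lies. First I would reduce to a set of finite $\cH^1$-measure by writing $E$ (up to an $\cH^1$-null set, which we discard at no cost since the claimed $E_0$ only needs full measure) as a countable increasing union of pieces of finite measure; so assume $\cH^1(E)<\infty$. The key tool is Lusin's theorem together with inner regularity of $\cH^1\restriction E$ (valid since $E$ is rectifiable, hence $\sigma$-finite and $\cH^1\restriction E$ is a Radon measure). The tangent field $\theta$ is $\cH^1$-measurable on $E$, so by Lusin's theorem, for each $n$ there is a compact set $E_n\subset E$ with $\cH^1(E\setminus E_n)<2^{-n}$ on which $\theta$ is continuous; replacing $E_n$ by $\bigcup_{k\le n}E_k$ (a finite union of compact sets on each of which $\theta$ is continuous — and continuity is preserved on the union because the $\theta$ values agree $\cH^1$-a.e.\ on overlaps, though one must be slightly careful here, see below) makes the sequence increasing, and $E_0:=\bigcup E_n$ then has full $\cH^1$-measure. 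Continuity of $\theta\restriction E_n$ gives the second bullet's continuity requirement with $E_n'=E_n$.

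The remaining point is condition \eqref{thetay}: for every $y\in E_n$, $\theta_y\in\bigcap_{r>0}\cl(\theta(B(y,r)\cap E_n))$. This would be automatic for points $y$ that are ``$\cH^1$-density points'' of $E_n$ — more precisely, points where every ball $B(y,r)$ meets $E_n$ in positive $\cH^1$-measure — because then $\theta(B(y,r)\cap E_n)$ accumulates at $\theta_y$ by continuity of $\theta\restriction E_n$ at $y$. But a general compact $E_n$ may contain isolated points or points of $\cH^1$-density zero. The fix is to further shrink: replace each $E_n$ by its subset of points of positive lower $\cH^1$-density relative to $E_n$; by a standard density argument (the set of points where $\cH^1(B(y,r)\cap E_n)=0$ for some $r$ is relatively open and $\cH^1$-null, being covered by countably many balls of zero measure), this removes only an $\cH^1$-null set, and on what remains \eqref{thetay} holds by the continuity of $\theta$. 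Then pass to the closure again if needed and re-intersect so that the sequence stays increasing with union of full measure.

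The main obstacle I expect is the bookkeeping needed to have \emph{all three} properties hold \emph{simultaneously on an increasing sequence}: Lusin's theorem naturally produces compact sets where $\theta$ is continuous, but taking unions to force monotonicity can destroy continuity at ``new'' boundary points, and the density/approachability condition \eqref{thetay} must survive the union as well. The clean way to handle this is to build the $E_n$ all as compact subsets of one fixed ``good set'' $G$ of full measure — e.g.\ first apply Lusin to get a single increasing exhaustion $\theta$ is continuous on the closure of $G$, intersect with the set of density points, and then let $E_n$ be a compact exhaustion of $G$ from inside — so that the $\theta$-values are genuinely coherent throughout and continuity/approachability are inherited by every $E_n$ from $G$. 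I would carry out: (1) reduce to $\cH^1(E)<\infty$; (2) Lusin + inner regularity to get $G\subset E$ of full measure with $\theta\restriction \cl G$ continuous; (3) discard the $\cH^1$-null set of non-density points of $G$; (4) take a compact exhaustion $E_n\uparrow E_0$ of the resulting set; (5) verify the bullets of \Cref{propo} and invoke it. Step (2)–(3), the measure-theoretic regularization, is the crux; the rest is routine.
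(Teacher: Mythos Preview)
Your proposal is correct and is exactly the paper's approach: produce compact sets $E_n=E_n'$ on which $\theta$ is continuous (Lusin plus inner regularity) with $\bigcup E_n$ of full $\cH^1$-measure in $E$, then invoke \Cref{propo}. One simplification you are missing: once you take $E_n'=E_n$, condition \eqref{thetay} is \emph{automatic}, because $y$ itself lies in $B(y,r)\cap E_n'$ for every $r>0$, so $\theta_y\in\theta(B(y,r)\cap E_n')\subset\cl(\theta(B(y,r)\cap E_n'))$; no density-point argument is needed, and isolated points cause no trouble. The only genuine bookkeeping is the monotonicity-versus-continuity issue you flagged, and that is handled by the standard trick of applying Lusin iteratively to the remainder so that the pieces are pairwise disjoint compact sets (a function continuous on each of finitely many pairwise disjoint compact sets is continuous on their union).
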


\begin{remark}\label{rempre1}
Another interesting corollary of \Cref{propo} is the following. Suppose that $E$ can be covered by a finite union of (not necessarily disjoint) $C^1$ curves, or $E$ is the graph of a convex function. In these cases there is an $E_0\subset E$ of full measure so that the tangent is continuous on $E_0$. Moreover, we can define the tangent on $E\setminus E_0$ (in a natural way) and find the sets $E_n, E_n'$ so that they satisfy our requirements and so that $\bigcup_n E_n$ covers $E$. Therefore the statement of \Cref{theorem:besicovitch} holds with $E_0$ replaced by $E$.

For example, if $E$ is the graph of a strictly convex function, then it is enough to delete at most one point for each $p\in P$, as we claimed in the introduction.
\end{remark}

\subsection{Construction of a Besicovitch set for rotations}

The main ideas for rotations are the same as for translations. %

\begin{proposition}\label{propo-rot}
Suppose that the sets $E_n$ satisfy the assumptions as in the beginning of \Cref{subsec:besicovitch-translations}. Let $P_0$ be an arbitrary path in $\Isom^+(\R^2)$. Then for any neighborhood of $P_0$, there is a path $P$ in the neighborhood of $P_0$ with the same endpoints as $P_0$, and there is a Borel mapping $p \mapsto x_p \in \P^2$ such that 
\begin{equation}
|\bigcup_{p\in P} p(\{x\in \bigcup E_n :\, x_p \not\in \nu_x \})|=0.
\end{equation}
\end{proposition}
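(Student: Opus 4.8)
The plan is to mimic the proof of \Cref{propo} almost verbatim, replacing \Cref{theorem:translations} by \Cref{theorem:rotations-on-line} and translations by intrinsic rotations. Given the neighborhood of $P_0$, first approximate $P_0$ by a polygonal path $P^0$ (a finite sequence of intrinsic rotations) in that neighborhood with the same endpoints. Fix $\eps_n > 0$ with $\sum \eps_n < \infty$. Then, iteratively over $n \geq 1$, apply \Cref{theorem:rotations-on-line} to each ``line segment'' $L \subset P^{n-1}$ (which represents a rotation with some fixed intrinsic projective center), with $E$ replaced by $E_n'$ and with $\eps$ replaced by a small $\eps_L$ such that $\sum_{L \subset P^{n-1}} \eps_L < \eps_n$. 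Here we must also choose, for each such $L$, a line $\ell = \ell_L \in (\P^2)^*$ through the projective center of the rotation represented by $L$; any such choice works, but to make the subsequent limiting argument go through cleanly we should keep the relevant balls small, and invoke \Cref{remark:z-not-in-closure-ball} so that the center of the rotation is not in the closed balls $\cl B(u_i, \eps)$. This produces a polygonal path $P^n = \bigcup_i \tilde L_i^n$, intrinsic rotations with projective centers near $\ell$, and points $u_i^n \in \ell_{L(i)}$ such that, setting
\[
E_i^n := \cl{\{x \in E_n' : \nu_x \cap \ell_{L(i)} \cap B(u_i^n, \eps_n) = \emptyset\}},
\]
we have $|\bigcup_i \bigcup_{p \in L_i^n} p(E_i^n)| < \eps_n$. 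As in \Cref{propo}, taking the closure does not increase measure (by the standing assumptions on $E_n, E_n'$ and by \Cref{lemma:luzin-rot} applied to $\cH^1$-null sets), so this estimate is legitimate.

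Next, exactly as in \Cref{propo}, construct $P^{n+1}$ by replacing each $\tilde L_i^n$ by a polygonal path in such a small neighborhood (in $\Isom^+(\R^2)$) that, by \Cref{lemma:small-nbhd-small} and \Cref{remark:small-nbhd-initial-mvmt} together with the compactness of the $E_i^n$, the area estimate survives. Shrinking the neighborhoods further, arrange that $P^n \to P$ uniformly to a continuous limit path $P$ with the correct endpoints, lying in the prescribed neighborhood of $P_0$. For each $p \in P$ and each $n$, track the index $i = i(p,n)$ of the segment of $P^n$ containing (the $n$-th approximant of) $p$, and set $E_p := \limsup_{n \to \infty} E_{i(p,n)}^n$. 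The same telescoping estimate as in \Cref{propo},
\[
\Bigl|\bigcup_{p \in P} p(E_p)\Bigr| \leq \Bigl|\bigcup_{m \geq n} \bigcup_{p \in P} p(E_{i(p,m)}^m)\Bigr| \leq \sum_{m \geq n} \eps_m,
\]
valid for every $n$, shows $\bigcup_{p \in P} p(E_p)$ is Lebesgue null.

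It remains to produce the Borel map $p \mapsto x_p \in \P^2$ with $\{x \in \bigcup E_n : x_p \notin \nu_x\} \subset E_p$. Fix $p$ and $y \in \bigcup E_n$ with $y \notin E_p$. Then for all large $n$, $y$ has a neighborhood disjoint from $\{x \in E_n' : \nu_x \cap \ell_{L(i(p,n))} \cap B(u_{i(p,n)}^n, \eps_n) = \emptyset\}$, i.e.\ there is $r > 0$ with $\nu_x \cap \ell_{L(i(p,n))} \cap B(u_{i(p,n)}^n, \eps_n) \neq \emptyset$ for all $x \in B(y,r) \cap E_n'$. Combining this with assumption \eqref{thetay} (applied in the projective-line form: $\nu_y$ is a limit of the $\nu_x$, and $\nu_y \cap \ell$ is correspondingly a limit of the intersection points), one gets that $\nu_y \cap \ell_{L(i(p,n))}$ lies in $\cl B(u_{i(p,n)}^n, \eps_n)$. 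Thus $\nu_y$ passes through every point of $\liminf_n \cl B(u_{i(p,n)}^n, \eps_n)$. One then checks this $\liminf$ is a single point of $\P^2$: the balls are taken with respect to the metric on $\P^2$, their radii $\eps_n \to 0$, and the fineness of the zigzags (via \Cref{remark:small-nbhd-initial-mvmt}) forces the centers $u_{i(p,n)}^n$ to converge; we call the limit $x_p$ (and define $x_p$ arbitrarily in the degenerate case). Then $y \notin E_p \Rightarrow x_p \in \nu_y$, which is the contrapositive of the desired inclusion. Borel measurability of $p \mapsto x_p$ follows because each $p \mapsto i(p,n)$ is Borel (the $P^n$ are polygonal and the construction is explicit) and $x_p$ is obtained as a pointwise limit.

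\textbf{Main obstacle.} The genuinely new point compared with \Cref{propo} is bookkeeping the lines $\ell_{L}$: each segment of each $P^n$ carries its own line through its own projective center, and for the $\liminf$-is-a-point argument one needs the sequence of lines $\ell_{L(i(p,n))}$, the intersection points $u_{i(p,n)}^n$, and the normal lines $\nu_x$ all to behave compatibly in the limit. I expect the main effort to be verifying that, with the fineness chosen small enough at each stage (so that $P^n$ hugs $P^{n-1}$ and hence the projective centers and the chosen lines vary little), the projective balls $\cl B(u_{i(p,n)}^n, \eps_n)$ form a nested-in-the-limit family shrinking to one point — essentially a Cantor-intersection argument in $\P^2$ — and that \eqref{thetay} transfers from tangent directions $\theta_x$ to the incidence statement ``$x_p \in \nu_x$'' via the continuity of $x \mapsto \nu_x$ on $E_n'$ and of the map sending a normal line and a transverse line to their intersection point. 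The area estimates themselves are immediate from \Cref{theorem:rotations-on-line}, \Cref{lemma:small-nbhd-small}, \Cref{lemma:luzin-rot} and \Cref{remark:small-nbhd-initial-mvmt}, exactly as in the translation case.
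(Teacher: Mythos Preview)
Your proposal is correct and follows essentially the same route as the paper: iterate \Cref{theorem:rotations-on-line} on the segments of successive polygonal approximations, take closures to get compact $E_i^n$, preserve the area bounds via the small neighborhood lemma, set $E_p=\limsup_n E_{i(p,n)}^n$, and read off $x_p$ from the shrinking balls. Two small points where the paper is cleaner: first, the paper defines $E_i^n:=\cl\{x\in E_n':\nu_x\cap B(u_i^n,\delta_i^n)=\emptyset\}$, dropping the intersection with $\ell$; this still lies below the set in \eqref{eq:theorem-rotations} so the area bound holds, and it spares you from tracking the varying lines $\ell_{L(i(p,n))}$ and the transversality of $\nu_y$ to them in the limit step. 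Second, the paper does not argue that the centers $u_{i(p,n)}^n$ converge---fineness of the zigzags controls how close $P^n$ stays to $P^{n-1}$ in $\Isom^+(\R^2)$, but it does not constrain where on $\ell$ the points $u_i$ land---so instead it simply notes that since $\delta_{i(p,n)}^n\to 0$ the set $\liminf_n \cl B(u_{i(p,n)}^n,\delta_{i(p,n)}^n)$ has at most one point, takes that point for $x_p$ when it exists, and lets $x_p$ be arbitrary otherwise; your parenthetical ``degenerate case'' is exactly this.
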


\begin{proof}
We begin with choosing $P^0$ to be an arbitrary polygonal path in the neighborhood of $P_0$ with the same endpoints as $P_0$. We iterate \Cref{theorem:rotations-on-line} to construct the polygonal paths $P^n$ in $\Isom^+(\R^2)$, each lying in a small neighborhood of the previous one. 
Here, the details are now a bit more technical, and we need to be careful when we specify our parameters for \Cref{theorem:rotations-on-line}.

As before, we choose an $\eps_n>0$ for each $n$ such that $\sum_n\eps_n<\infty$. Each line segment $L_i^n \subset P^n$ corresponds to a rotation $\rho_i^n$ with projective center $z_i^n$. We choose a line $\ell_i^n$ containing $z_i^n$ and a $0<\delta_i^n<\eps_n$. (We will impose additional conditions on $\ell_i^n,\delta_i^n$ in \Cref{hide}.) Then we replace $\rho_i^n$ by a sequence of intrinsic rotations by applying \Cref{theorem:rotations-on-line} and \Cref{remark:z-not-in-closure-ball} with $E$ replaced by $E_{n+1}'$, $\ell$ replaced by $\ell_i^n$, and $\eps$ replaced by $\delta_i^n$.

Choosing each of the parameters $\delta_i^n$ sufficiently small, we obtain the balls $B(u_i^n,\delta_i^n)$ and:
\begin{equation}
\label{6.8}
|\bigcup_i\bigcup_{p\in L_i^n}p(E_i^n)|<\eps_n,
\end{equation}
where 
\begin{equation}
E_i^n:=\cl\{x\in E_n'  : \nu_x\cap B(u_i^n,\delta_i^n)=\emp\}.
\end{equation}

We define $i(p, n)$ as in the previous section, and again take $E^p:=\limsup_{n\to\infty}E_{i(p,n)}^n$. Then as in the previous section, the movement $\bigcup_{p\in P} p(E^p)$ covers only a null set. 

Since $\delta_{i(p,n)}^n \to 0$, we know $\liminf_{n\to\infty}\cl B(u_{i(p,n)}^n,\delta_{i(p,n)}^n)$ can have at most one point.  If it has one point, let $x_p$ be that point. Otherwise, let $x_p$ be arbitrary.

Now suppose that $y\in E_n$ and $y \not\in E^n_{i(p,n)}$. Then $\nu_y \cap \cl B(u_{i(p,n)}^n,\delta_{i(p,n)}^n) \neq \emptyset$. Therefore indeed 
$\{x \in \bigcup E_n  :\, x_p \not\in \nu_x\} \subset E^p$, and the proof is finished.
\end{proof}

\subsection{The main theorem}
\label{hide}

In \Cref{propo-rot}, the points on $E$ that we hide at each $p \in P$ are those whose normal line passes through a particular point $x_p$. Since we would like to hide as little of $E$ as possible, it would be undesirable if an $x_p$ from our construction has the property that the normal line of positively many points of $E$ pass through $x_p$. 

Fortunately such points are very rare:

\begin{lemma}
There are at most countably many points with the property that the normal line of positively many points of $E$ pass through this point. 
\end{lemma}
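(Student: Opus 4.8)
The plan is to control, for each fixed $\delta>0$, the set $\{p\in\P^2:\cH^1(A_p)\ge\delta\}$, where $A_p:=\{x\in E:p\in\nu_x\}$, and show it is \emph{finite}; the set of all points through which the normal lines of positively many points of $E$ pass is then $\bigcup_{k\in\N}\{p:\cH^1(A_p)\ge 1/k\}$, a countable union of finite sets. Since the rectifiable part of $E$ carries $\sigma$-finite $\cH^1$ and an $\cH^1$-null subset of $E$ has no subset of positive $\cH^1$-measure at all, we may and do assume throughout that $\cH^1(E)<\infty$; the general case follows by writing $E$ as a countable union of finite-measure pieces and observing that a point which is ``bad'' for $E$ is bad for one of the pieces.

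The key claim is: for every line $m\in(\P^2)^*$, the set $F_m:=\{x\in E:\nu_x=m\}$ is $\cH^1$-null. If $m=\P^1$ is the line at infinity this is trivial, since $\nu_x$ is by definition a line through the point $x\in\R^2$ and so is never $\P^1$. If $m$ is an ordinary line, then every $x\in F_m$ lies on $m$ and has $\theta_x$ equal to the direction perpendicular to $m$. On the other hand $m$ is itself a $C^1$ curve, so by the defining property of the tangent field of the rectifiable set $E$ (see \Cref{sec:rectifiable}), for $\cH^1$-a.e.\ $x\in m\cap E$ the tangent $\theta_x$ equals the direction of $m$; and since our fixed pointwise tangent field differs from a genuine one only on an $\cH^1$-null set, this remains true for the $\theta_x$ we use. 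As $F_m\subset m\cap E$ and the direction of $m$ is not perpendicular to itself, we get $\cH^1(F_m)=0$.

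Granting the claim, the family $\{A_p\}$ is pairwise $\cH^1$-essentially disjoint: if $p\neq q$ and $x\in A_p\cap A_q$, then $\nu_x$ contains the two distinct points $p,q$, hence $\nu_x$ is the unique line $\overline{pq}$ joining them, so $A_p\cap A_q\subset F_{\overline{pq}}$, which is $\cH^1$-null. Now let $p_1,\dots,p_n$ be distinct points with $\cH^1(A_{p_i})\ge\delta$ for each $i$. Replacing $A_{p_i}$ by $\tilde A_i:=A_{p_i}\setminus\bigcup_{j\neq i}A_{p_j}$ changes it only by an $\cH^1$-null set, so the $\tilde A_i$ are disjoint Borel sets with $\cH^1(\tilde A_i)\ge\delta$, whence $n\delta\le\sum_i\cH^1(\tilde A_i)=\cH^1\big(\bigcup_i\tilde A_i\big)\le\cH^1(E)$. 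Thus $n\le\cH^1(E)/\delta$, so $\{p:\cH^1(A_p)\ge\delta\}$ is finite, and the lemma follows.

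The main obstacle is the claim that $\cH^1(F_m)=0$ for every line $m$: this is the single place where rectifiability and the compatibility of the tangent field with $C^1$ curves are used, and a little care is needed so that the argument is insensitive to the choice of pointwise representative of the tangent (equivalently, normal) field.
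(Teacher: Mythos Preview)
Your proof is correct and follows essentially the same approach as the paper: both arguments hinge on the observation that for any two distinct points $p,q$, the intersection $A_p\cap A_q$ is contained in the set of $x\in E$ with $\nu_x$ equal to the single line $\overline{pq}$, which is $\cH^1$-null; from this pairwise essential disjointness and $\sigma$-finiteness one concludes countability. The paper states these two facts tersely and invokes the general measure-theoretic principle that a $\sigma$-finite measure space cannot contain uncountably many pairwise essentially disjoint sets of positive measure, whereas you spell out both the proof that $F_m$ is null (using that $m$ is itself a $C^1$ curve whose tangent direction cannot be perpendicular to itself) and the countability conclusion via the explicit $\delta$-thresholding bound $n\le\cH^1(E)/\delta$; your reduction to the finite-measure case is also correct and makes this last step clean.
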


\begin{proof}
Note that for any two such points there is only one common line, and there can be only an $\cH^1$-nullset of points of $E$ which have a given normal line. Since $E$ has $\sigma$-finite $\cH^1$-measure, it cannot have more than countably many subsets of positive measure such that their pairwise intersections are null.
\end{proof}

We denote the exceptional points above by $x_1,x_2,\dots$. In what follows, we show how to choose the parameters in our construction more carefully to avoid these points, i.e., so that $x_p \not\in \{x_1, x_2, \ldots \}$ for any $p \in P$.

We use the notation from the previous section. For each $n\ge 1$ and for each $L_i^n\subset P^n$, let $S_i^n$ denote the strip $B(\ell,\delta)$ assigned to \emph{the parent} of $L_i^n$, i.e., to the line segment in $P^{n-1}$ that we replaced by a polygon in the construction of $L_i^n$. Then we choose $\ell_i^n$, $\delta_i^n$ such that $B(\ell_i^n,\delta_i^n)\subset S_i^n$ and such that $\cl B(\ell_i^n,\delta_i^n)\setminus\{z_i^n\}$ does not contain any of the points $x_m$ with $m \leq n$.

Then  $\liminf_{n\to \infty} \cl B(u_{i(p, n)}^n, \delta_{i(p,n)}^n)$ is either empty, or contains one point. Suppose it contains a point $x_p$. Since $u_{i(p, n)}^n \in \ell_{i(p, n)}^n$ and the strips $\{B(\ell_{i(p, n)}^n,\delta_{i(p, n)}^n)\}_n$ are nested, it follows that $x_p \in \bigcap_n \cl B(\ell_{i(p, n)}^n,\delta_{i(p, n)}^n)$. By \Cref{remark:z-not-in-closure-ball}, $z_{i(p,n)}^n\not\in \cl B(u_{i(p, n+1)}^{n+1}, \delta_{i(p, n+1)}^{n+1})$, so $x_p \neq z_{i(p,n)}^n$ for all $n$.

Thus we have shown the following. This is the main theorem in our paper.

\begin{theorem}[Besicovitch set for rotations]
\label{theorem:besicovitch-rot}
Let $E$ be an arbitrary rectifiable set, and let $x\mapsto\theta_x$ be an arbitrary tangent field of $E$. Then there is an $E_0\subset E$ of full $\cH^1$-measure in $E$ for which the following holds. 

For every path $P_0$ in $\Isom^+(\R^2)$, and for any neighborhood of $P_0$, there is a path $P$ in the neighborhood of $P_0$ with the same endpoints as $P_0$, and there is a Borel mapping $p \mapsto x_p \in \P^2$ such that
\begin{equation}\label{pep-rot}
|\bigcup_{p\in P} p(\{x\in E_0:\, x_p \not \in \nu_x\})|=0.
\end{equation}
Furthermore, for each $p$, the set $\{x\in E_0:\, x_p \not \in \nu_x\}$ has full $\cH^1$-measure in $E$. 
\end{theorem}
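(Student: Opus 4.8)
The plan is to derive \Cref{theorem:besicovitch-rot} from \Cref{propo-rot}, exactly paralleling how \Cref{theorem:besicovitch} is obtained from \Cref{propo}, together with the refinement described in \Cref{hide}. First I would construct, for an arbitrary rectifiable set $E$ and an arbitrary tangent field $x\mapsto\theta_x$, an increasing sequence of compact sets $E_1\subset E_2\subset\cdots$ with $E_n=E_n'$, each of finite $\cH^1$-measure, such that $\bigcup_n E_n=:E_0$ has full $\cH^1$-measure in $E$, and such that the tangent field restricted to each $E_n$ is continuous and satisfies \eqref{thetay}. This is a standard Lusin-type argument: cover $\cH^1$-a.e.\ point of $E$ by countably many $C^1$ curves $\{\Gamma_i\}$ as in \Cref{sec:rectifiable}, on each $\Gamma_i$ the tangent agrees $\cH^1$-a.e.\ with the $C^1$ tangent of $\Gamma_i$, and by Lusin's theorem applied to each $\Gamma_i$ we can exhaust a full-measure subset of $\Gamma_i\cap E$ by compact sets on which the tangent is continuous; condition \eqref{thetay} then holds after discarding a further null set (the points that fail to be density points of these continuity sets), since at a retained point $y$ every small ball $B(y,r)\cap E_n'$ is nonempty and its tangent values cluster at $\theta_y$. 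Redefining $\theta$ on the remaining null set $E\setminus E_0$ to agree with the $C^1$ tangent values does not affect which tangent field we started from, since any two tangent fields agree $\cH^1$-a.e.

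Next I would apply \Cref{propo-rot} to these sets $E_n$, with the given path $P_0$ and neighborhood, obtaining a path $P$ in the neighborhood, with the same endpoints, and a Borel map $p\mapsto x_p\in\P^2$ so that $|\bigcup_{p\in P}p(\{x\in E_0:\,x_p\notin\nu_x\})|=0$, which is precisely \eqref{pep-rot}. The one extra claim in \Cref{theorem:besicovitch-rot} beyond \Cref{propo-rot} is the ``furthermore'': that for each $p$, the deleted set $\{x\in E_0:\,x_p\in\nu_x\}$ is $\cH^1$-null, i.e.\ $\{x\in E_0:\,x_p\notin\nu_x\}$ has full measure. This is exactly what the construction in \Cref{hide} buys us: by the lemma in \Cref{hide} there are only countably many ``exceptional'' points $x_1,x_2,\dots$ through which the normal lines of a positive-measure subset of $E$ pass, and \Cref{hide} explains how to shrink the strips $B(\ell_i^n,\delta_i^n)\subset S_i^n$ at each stage so that $\cl B(\ell_i^n,\delta_i^n)\setminus\{z_i^n\}$ avoids $x_m$ for $m\le n$, and so that (using \Cref{remark:z-not-in-closure-ball}) $x_p\neq z_{i(p,n)}^n$ for every $n$. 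Hence $x_p\notin\{x_1,x_2,\dots\}$, so by definition of the exceptional set, $\{x\in E:\,x_p\in\nu_x\}$ is $\cH^1$-null; intersecting with $E_0$ (full measure) gives the claim.

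So the body of the proof is essentially a bookkeeping exercise: (i) build the $E_n$; (ii) invoke \Cref{propo-rot}; (iii) invoke the avoidance construction of \Cref{hide} to get the ``furthermore.'' I expect the only genuinely substantive point — and the main obstacle to write carefully — to be step (i), namely verifying that every rectifiable set admits such an exhaustion $\{E_n\}$ with $E_n=E_n'$ and with the clustering condition \eqref{thetay} holding at \emph{every} point of $E_n$, not merely a.e.; the subtlety is that \eqref{thetay} is a pointwise statement, so one must be slightly careful to retain only points that are approached by the continuity set in the appropriate sense, and to check this costs only a null set. The remaining steps are direct citations of \Cref{propo-rot}, the lemma of \Cref{hide}, and \Cref{remark:z-not-in-closure-ball}, and require no new estimates.
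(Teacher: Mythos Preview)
Your proposal is correct and follows exactly the paper's approach: construct the exhausting sequence $E_n=E_n'$ (which the paper simply asserts is possible before \Cref{theorem:besicovitch}), apply \Cref{propo-rot}, and then invoke the avoidance argument of \Cref{hide} together with \Cref{remark:z-not-in-closure-ball} to secure the ``furthermore'' clause. One minor simplification: your concern about verifying \eqref{thetay} pointwise is unnecessary in the case $E_n=E_n'$, since if $\theta$ is continuous on $E_n$ then for every $y\in E_n$ we trivially have $\theta_y\in\theta(B(y,r)\cap E_n)\subset\cl(\theta(B(y,r)\cap E_n'))$.
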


\begin{remark}\label{rempre2} By the same argument as at the end of the previous section, we can get a stronger statement if the set $E$ has nice geometric properties. For instance, if it is covered by finitely many $C^1$ curves, or if it is the graph of a convex function, then the statement holds with $E_0$ replaced by $E$. 

As mentioned in the introduction, consider the special case where there is a line $\ell \in (\P^2)^*$ such that there is a neighborhood of $\ell$ in which no two normal lines of $E$ intersect. Then by choosing all the lines $\ell_i^n$ to lie inside this neighborhood, we can ensure that all the $x_p$ do as well. Hence, \Cref{theorem:besicovitch-rot} says that we can rotate $E$ continuously by $360^\circ$, covering a set of zero Lebesgue measure, where at each time moment, we only need to delete \emph{one point}.
\end{remark}

\begin{remark}
\label{remark:residual}
By the small neighborhood lemma, we can see that \eqref{6.8} holds (with the same sets $E_i^n$) not only for the path $P^n$ but for every continuous path $P$ sufficiently close to $P^n$. Using this observation, we obtain a dense open set of curves, and then, by taking the limit, a residual set of continuous paths $P$ connecting the endpoints of $P_0$, for which the statement of \Cref{theorem:besicovitch-rot} holds.
\end{remark}

\subsection{Construction of a Nikodym set}
\label{section:construction-nikodym}

We conclude this paper by explaining how the continuous Besicovitch sets can be used to construct Nikodym sets for rectifiable curves. 

Let $E \subset \R^2$ be an arbitrary rectifiable set. We fix an arbitrary (continuous) rectifiable curve $\Gamma \subset \R^2$ (if $E$ contains such a curve, we can choose $\Gamma$ to be that curve). By ``putting a copy of $E$ onto a point $y$,'' we mean that the corresponding copy of $\Gamma$ (i.e., the same isometry applied to $\Gamma$) goes through $y$.

\medskip
For every continuous rectifiable curve $\Gamma$, there is a path $P_0 \subset \Isom^+(\R^2)$ and a neighborhood of $P_0$ such that $\Gamma$ covers a set of non-empty interior along any path $P$ which lies in this neighborhood and has the same endpoints as $P_0$. (For example, if $\Gamma$ is a circle, we make sure that it is not possible for $P$ to be a rotation around the circle's center.) 

We apply \Cref{theorem:besicovitch-rot} with $E$ and with this neighborhood of $P_0$ to obtain a path $P$, and for each $p\in P$ to obtain a subset $E^p\subset E$ of full $\cH^1$-measure so that $|\bigcup_{p \in P} p(E^p)| = 0$.

	By our choice of $P_0$, we know that $\bigcup_{p \in P} p(\Gamma)$ has nonempty interior. Thus, $\bigcup_{q \in \Q^2} \bigcup_{p \in P} (q + p(\Gamma)) = \R^2$, whereas
\begin{equation}
\label{eq:nikodym}
A 
:=
\bigcup_{q \in \Q^2} \bigcup_{p \in P} (q + p(E^p))
\end{equation}
has measure zero.
Thus, we have shown the following.

\begin{theorem}
\label{theorem:nikodym}
Let $E$ be a rectifiable set and $\Gamma$ a rectifiable curve. Then the set $A$ defined by \eqref{eq:nikodym} is a \emph{Nikodym set} for $E$:
\begin{enumerate}
\item
$A$ has Lebesgue measure zero;
\item
Through each point $y \in \R^2$, $A$ contains a copy of $\cH^1$-a.e.\ point of $E$. That is, for all $y \in \R^2$, there is an $E_y \subset E$ and a $p_y \in \Isom^+(\R^2)$ such that $\cH^1(E \setminus E_y) = 0$, $y \in p_y(\Gamma)$, and $p_y(E_y) \subset A$.
\end{enumerate}
\end{theorem}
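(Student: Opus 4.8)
The plan is to deduce \Cref{theorem:nikodym} from \Cref{theorem:besicovitch-rot}. The crux is to choose the initial path $P_0$ so that $\Gamma$ \emph{robustly} sweeps out a region with nonempty interior; once that is in hand, $A$ is a countable union of rational translates of a Lebesgue-null set (hence null), while the same translates of $\bigcup_{p\in P}p(\Gamma)$ cover all of $\R^2$, which yields the ``Nikodym'' incidence property.

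So first I would establish the geometric fact asserted just before the theorem: since $\Gamma$ is a curve, and hence has more than one point, there is a path $P_0\subset\Isom^+(\R^2)$ and a neighbourhood of it such that $\bigcup_{p\in P}p(\Gamma)$ has nonempty interior for \emph{every} path $P$ in that neighbourhood with the same endpoints as $P_0$. I would take $P_0$ to be a short translation $t\mapsto(\text{translation by }tv)$, $t\in[0,1]$, with endpoints $g_0=\id$ and $g_1=(\text{translation by }v)$, for a suitable vector $v$. Fix two distinct points $a,b\in\Gamma$ and a sub-arc $\Gamma_0\subset\Gamma$ joining them. The boundary of the region swept by $\Gamma_0$ along a path $P$ is the closed curve obtained by concatenating $g_0(\Gamma_0)$, the arc $\{P(t)(b)\}_t$, the reverse of $g_1(\Gamma_0)$, and the reverse of $\{P(t)(a)\}_t$; for $P$ close to $P_0$ in the sup metric this closed curve is $C^0$-close to the one built from $P_0$ (the two ``side arcs'' vary continuously and keep their endpoints $g_0(a),g_1(a),g_0(b),g_1(b)$ fixed), hence has the same winding number about any point $w$ that stays away from it. Choosing $v$ so that the $P_0$-curve has nonzero winding number about some $w$, the map $(t,s)\mapsto P(t)(\gamma_0(s))$, with $\gamma_0$ a parametrisation of $\Gamma_0$, is a continuous map of a rectangle whose boundary is, up to reparametrisation, this closed curve; a standard degree argument then forces $w$, and an entire neighbourhood of $w$, to lie in the image, i.e.\ in $\bigcup_{p\in P}p(\Gamma_0)\subset\bigcup_{p\in P}p(\Gamma)$. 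Making this robustness argument fully precise is the step I expect to be the main obstacle; everything afterwards is routine assembly.

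With such a $P_0$ and neighbourhood fixed, \Cref{theorem:besicovitch-rot} applied to $E$ produces a path $P$ in the neighbourhood with the endpoints of $P_0$, a set $E_0\subset E$ of full $\cH^1$-measure, and a Borel map $p\mapsto x_p\in\P^2$ such that, writing $E_p:=\{x\in E_0:x_p\notin\nu_x\}$, one has $\cH^1(E\setminus E_p)=0$ for every $p$ and $|B|=0$, where $B:=\bigcup_{p\in P}p(E_p)$. Since $A=\bigcup_{q\in\Q^2}(q+B)$ is a countable union of translates of the null set $B$, it has Lebesgue measure zero, which is part (1). For part (2), fix $y\in\R^2$. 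By the construction above, $\bigcup_{p\in P}p(\Gamma)$ contains an open ball $B(w,r)$, and the open set $B(y-w,r)=\{q:y-q\in B(w,r)\}$ meets the dense set $\Q^2$; choose $q\in\Q^2$ in it, then $p\in P$ with $y-q\in p(\Gamma)$, so that $y\in q+p(\Gamma)$. Setting $p_y:=(u\mapsto q+p(u))\in\Isom^+(\R^2)$ and $E_y:=E_p$, we obtain $\cH^1(E\setminus E_y)=0$, $y\in p_y(\Gamma)$, and $p_y(E_y)=q+p(E_p)\subset A$, which is part (2) and finishes the proof.
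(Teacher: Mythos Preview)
Your proposal is correct and follows essentially the same route as the paper: choose $P_0$ so that $\bigcup_{p\in P}p(\Gamma)$ has nonempty interior for every nearby $P$ with the same endpoints, apply \Cref{theorem:besicovitch-rot} to get $P$ and the null set $B=\bigcup_{p\in P}p(E_p)$, and then take rational translates. The paper simply asserts the robustness fact about $\Gamma$ sweeping out nonempty interior (with a parenthetical hint), whereas you supply a concrete winding-number/degree argument with $P_0$ a short translation; this extra detail is welcome and not a departure from the paper's strategy.
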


With \Cref{theorem:besicovitch} in place of \Cref{theorem:besicovitch-rot}
we can prove a result about placing \emph{translated} copies of $E$ at each point $y \in \R^2$. %

By essentially the same arguments as above, we now obtain a path $P \subset \R^2$ and $\theta_p \in \P^1$ such that $\bigcup_{p \in P} (p + E^p)$ has Lebesgue measure zero, where $E^p = \{x \in E_0 : \theta_x \neq \theta_p\}$, and such that $\bigcup_{p \in P} (p + \Gamma)$ has nonempty interior. Thus, $\bigcup_{q \in \Q^2} \bigcup_{p \in P} (q + p + \Gamma) = \R^2$, whereas
\begin{equation}
\label{eq:nikodym-translations}
A 
:=
\bigcup_{q \in \Q^2} \bigcup_{p \in P} (q + p + E^p)
\end{equation}
has Lebesgue measure zero. To ensure that $E^p$ has full $\cH^1$-measure in $E$, it is sufficient to assume that $\{x \in E : \theta_x = \theta\}$ is $\cH^1$-null for every $\theta \in \P^1$.

\begin{theorem}
\label{theorem:nikodym-translations}
Let $E$ be a rectifiable set and $\Gamma$ a rectifiable curve. Suppose that for every direction $\theta \in \P^1$, the set $\{x \in E : \theta_x = \theta\}$ is $\cH^1$-null. Then the set $A$ defined by \eqref{eq:nikodym-translations} satisfies the following:
\begin{enumerate}
\item
$A$ has Lebesgue measure zero;
\item
Through each point $y \in \R^2$, $A$ contains a translated copy of $\cH^1$-a.e.\ point of $E$. That is, for all $y \in \R^2$, there is an $E_y \subset E$ and a $p_y \in \R^2$ such that $\cH^1(E \setminus E_y) = 0$, $y \in p_y + \Gamma $, and $p_y + E_y \subset A$.
\end{enumerate}
\end{theorem}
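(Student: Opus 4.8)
The plan is to run the argument that was just used to prove \Cref{theorem:nikodym}, with \Cref{theorem:besicovitch} in place of \Cref{theorem:besicovitch-rot}, so that the copies of $E$ we place are translates rather than general isometric images. First I would fix the rectifiable curve $\Gamma$ and produce a path $P_0\subset\R^2$ together with a neighborhood of $P_0$ with the property that $\bigcup_{p\in P}(p+\Gamma)$ has nonempty interior for \emph{every} path $P$ in that neighborhood having the same endpoints as $P_0$. Then I would apply \Cref{theorem:besicovitch} to $E$ with this $P_0$ and this neighborhood, obtaining a full-measure set $E_0\subset E$, a path $P$ in the neighborhood with the same endpoints as $P_0$, and a Borel map $p\mapsto\theta_p$ with $|\bigcup_{p\in P}(p+E_p)|=0$, where $E_p:=\{x\in E_0:\theta_x\neq\theta_p\}$.

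Next I would verify that, under the hypothesis of the theorem, each $E_p$ has full $\cH^1$-measure in $E$: one has $E\setminus E_p\subseteq(E\setminus E_0)\cup\{x\in E:\theta_x=\theta_p\}$, and both sets on the right are $\cH^1$-null, the first since $\cH^1(E\setminus E_0)=0$ and the second by the assumption that $\{x\in E:\theta_x=\theta\}$ is $\cH^1$-null for every $\theta\in\P^1$. Now set $A:=\bigcup_{q\in\Q^2}\bigcup_{p\in P}(q+p+E_p)$. Since $\bigcup_{p\in P}(p+E_p)$ is Lebesgue null, Lebesgue measure is translation invariant, and $\Q^2$ is countable, $A$ is Lebesgue null, giving (1). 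For (2), since $\bigcup_{p\in P}(p+\Gamma)$ contains an open ball and $\Q^2$ is dense in $\R^2$, we get $\bigcup_{q\in\Q^2}\bigcup_{p\in P}(q+p+\Gamma)=\R^2$; hence for each $y\in\R^2$ there are $q\in\Q^2$ and $p\in P$ with $y\in q+p+\Gamma$, and then $p_y:=q+p\in\R^2$ and $E_y:=E_p$ satisfy $y\in p_y+\Gamma$, $\cH^1(E\setminus E_y)=0$, and $p_y+E_y\subseteq A$, which is exactly (2).

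The only step that is not a direct transcription of the already-proved \Cref{theorem:nikodym} is the construction of $P_0$, which is the translational analogue of the geometric fact invoked there. I would handle it as follows: since $\Gamma$ is rectifiable and non-degenerate, fix a point of $\Gamma$ at which $\Gamma$ has an approximate tangent in some direction $\theta_0$, take $P_0$ to be a short segment in the transverse direction $\theta_0^{\perp}$, and choose its endpoints to differ by a vector not parallel to $\theta_0$ (so a nearby path cannot collapse onto a line parallel to $\Gamma$). A Fubini-type slicing argument over the levels transverse to $\theta_0^{\perp}$ then shows that translating the corresponding small piece of $\Gamma$ along $P_0$, and along any path close enough to $P_0$ with the same endpoints (the argument uses only that the perturbed path meets every such level inside a short interval), sweeps out a set containing an open rectangle. (If $\Gamma$ is a line segment this is just the statement that a segment translated transversally sweeps a parallelogram; if $\Gamma$ is a circle any short segment already works, sweeping an annular region.) I expect this slicing-plus-robustness bookkeeping to be the only mildly delicate point; the rest of the proof is purely formal once \Cref{theorem:besicovitch} is in hand.
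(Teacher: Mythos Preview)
Your proposal is correct and follows essentially the same approach as the paper: apply \Cref{theorem:besicovitch} in place of \Cref{theorem:besicovitch-rot}, take the countable union over $\Q^2$-translates, and invoke the hypothesis on $\{x\in E:\theta_x=\theta\}$ to ensure each $E_p$ has full $\cH^1$-measure. The paper's own argument is in fact terser than yours---it simply says ``by essentially the same arguments as above'' and does not spell out the construction of $P_0$ or the robustness of the nonempty-interior claim---so your sketch of the transverse-translation/Fubini step is additional detail rather than a departure.
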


\section{Dilations and similarity transformations}
\label{sec:dilations}

In this section, we show how the techniques of \Cref{sec:rotations} can be applied to analyze similarity transformations. Let $\Sim^+(\R^2)$ denote the space of all orientation-preserving similarity transformations in $\R^2$.

Elements in $\Isom^+(\R^2)$ were specified by the parameters $(w, \phi) \in \R^2 \times \R$. To index elements in $\Sim^+(\R^2)$, we introduce a new parameter $\alpha\in\R$. (In the special case of isometries we can take $\alpha=0$.)

\medskip
For $\alpha,\phi \in \R$, define $\phi_\alpha = e^{i\alpha} \phi$. For $\phi \neq 0$, we let $\rho_\alpha(w, \phi)$ denote the similarity transformation $u \mapsto e^{i \phi_\alpha} (u-z) + z$, where $z = w/\phi_\alpha$. Then it is natural to let $\rho_\alpha(w, 0)$ denote translation by $-iw$. For any $(w, \phi) \neq (0,0)$, we define the \emph{projective center} of $\rho_\alpha(w, \phi)$ to be the image of $(e^{-i\alpha} w, \phi) \in \R^3$ under the quotient map $\R^3\setminus \{0\} \to \P^2$.

\begin{remark}
The center of a translation now depends on $\alpha$. This is natural because a single translation can be viewed, e.g., as a rotation around some point at infinity and also as a dilation around some other point at infinity. 
\end{remark}

\begin{remark}\label{remark:remark2}
When $\alpha\equiv 0\ (\textrm{mod}\,\pi)$, the transformation $\rho_\alpha(w, \phi)$ is an isometry. When $\alpha \equiv \pi/2\ (\textrm{mod}\,\pi)$, the transformation is a dilation. For all other $\alpha$, 
the trajectory of a point $x$ under $\rho_\alpha(w, \phi)$ is a logarithmic spiral centered at $z$. Since
$$e^{i\psi_\alpha}=e^{i\psi(\cos\alpha+i\sin\alpha)}=e^{-\psi\sin\alpha}e^{i\psi\cos
\alpha},$$ the trajectory consists of those points $u$ for which $|u-z|=e^{-\psi\sin\alpha}|x-z|$ and $\arg (u-z)=\psi\cos\alpha+\arg(x-z)$ for some $\psi\in[0,\phi]$. For future reference, note that 
\begin{equation}\label{rem2}
\arg (u-z)=-\cot\alpha(\log |u-z|-\log|x-z|)+\arg(x-z).
\end{equation}
\end{remark}

\medskip

When studying similarity transformations, it turns out that instead of the normal line $\nu_x$, it is much more relevant to look at the normal line rotated by angle $\alpha$ around $x$. We denote this line by $(\nu_x)_\alpha$. We will prove the following generalization of \Cref{theorem:rotations-on-line}.

\begin{theorem}
\label{theorem:rotations-on-line-spiral}
Let $E \subset \R^2$ be a bounded rectifiable set of finite $\cH^1$-measure. Let $\epsilon > 0$, and let $\rho$ be a similarity transformation with parameter $\alpha$. Let $\ell\subset\P^2$ be a line through the projective center $z$ of $\rho$.

Then there are intrinsic similarity transformations $\rho_i=\rho_\alpha(x_i)$ with projective centers $z_i \in B(\ell, \epsilon) \subset \P^2$ such that the corresponding polygonal path $P=\bigcup_i L_i \subset \Sim^+(\R^2)$ connects the identity and $\rho$, and for each $i$, there exists a $u_i \in \ell$ such that 
\begin{equation}
\label{eq:theorem-rotations-spiral}
|\bigcup_{i}\bigcup_{p\in L_i} p(\{x\in E: (\nu_x)_\alpha \cap \ell \cap B(u_i, \eps)  = \emptyset\})|< \eps.
\end{equation}
Furthermore, if $\alpha \equiv \pi/2\ (\textrm{mod}\,\pi)$, then we can take $z_i \in \ell$.
\end{theorem}

Throughout this section, we fix an $\alpha \not\equiv 0 \pmod{\pi}$. To prove \Cref{theorem:rotations-on-line-spiral}, we first establish the analogues of \Cref{lemma:luzin} and \Cref{var2}. As in \Cref{sec:second-key-idea-rot}, assume $E \subset B(0, r) \subset \R^2$. We will show that there is a constant $c$ that depends only on $r$ such that the following two lemmas hold.

\begin{lemma}\label{lemma:luzin-spiral}
Let $y = (w, \phi) \in \R^2 \times \R$ with $|\phi| \lesssim 1$. Let $\rho=\rho_\alpha(y)$ be a similarity transformation and let $R\subset E$ be arbitrary. Then, if we transform $R$ by $\rho$, the area covered is $\lesssim c \cH^1(R)|y|$.
\end{lemma}

\begin{lemma}\label{var2-spiral}
Let $\delta>0$ be sufficiently small (depending on $r$).
Let $y = (w, \phi) \in \R^2 \times \R$ with $|\phi| \lesssim 1$. Let $\rho=\rho_\alpha(y)$ be a transformation with projective center $z$. Let $R\subset E$ be such that, for each $x\in R$, $(\nu_x)_\alpha \cap B(z,\delta)\neq \emptyset$. (Here, the ball $B(z, \delta)$ is defined with respect to the metric on $\P^2$.) Then, when we transform $R$ by $\rho$, the area covered is 
$$\lesssim c\delta\cH^1(R)|y|.$$
\end{lemma}

\begin{proof}[Proof of \Cref{lemma:luzin-spiral} and \Cref{var2-spiral}]
Let $\Psi:\,\R^2\to\R^2$ denote the measure preserving map that rotates each circle $|u-z|=\mathrm{const}$ around $z$ by angle $\cot\alpha\log |u-z|$. By \eqref{rem2}, $\Psi$ takes the spiral trajectories of $\rho$ to straight lines through $z$. In particular, $\Psi$ takes the trajectory of point $x$ under $\rho$ to the line segment 
\begin{equation}
\label{eq:line-segment}
[z+e^{i\cot\alpha\log|x-z|}(x-z),z+e^{-\phi\sin\alpha}e^{i\cot\alpha\log|x-z|}(x-z)].
\end{equation}
In other words, $z+\lambda e^{i\theta}$ belongs to this line segment if and only if $\theta=\cot\alpha\log|x-z|+ \arg(x-z)$ and $\lambda$ belongs to the interval $I_x \subset \R$, whose endpoints are $|x-z|$ and $e^{-\phi\sin\alpha}|x-z|$. (We do not specify which endpoint is the left and which is the right.)

Let $S \subset \R^2$ be the region covered by applying $\rho$ to $R$. We have
\begin{align*}%
|S| 
=
|\Psi(S)|
=
\int_0^{2\pi}
\int_{\{\lambda : z+\lambda e^{i\theta} \in \Psi(S)\}}
\lambda \, d\lambda \, d\theta
.
\end{align*}
To simplify the inner integral, observe that $\Psi(S)$ is the union of the line segments \eqref{eq:line-segment} over all $x \in R$. Thus
$$\int_{\{\lambda : z+\lambda e^{i\theta} \in \Psi(S)\}} \lambda \, d\lambda\leq
\sum_{x\in R \, :\, \arg (x-z) + \cot \alpha \log|x-z| = \theta}
\int_{I_x} \lambda \, d\lambda,$$
where 
$$\int_{I_x} \lambda \, d\lambda=|e^{-2\phi\sin\alpha} - 1|\cdot|x-z|^2\lesssim|\phi\sin\alpha|\cdot|x-z|^2.$$

Let $t \mapsto x(t)$ be a parametrization of $R$ by arclength. 
Note that the derivative of $t \mapsto \arg (x(t)-z) + \cot \alpha \log|x(t)-z|$ is
\[
\left< \frac{i(x(t) - z)}{|x(t)-z|^2}, \dot x(t) \right> + \cot\alpha \left<\frac{x(t) - z}{|x(t)-z|^2}, \dot x(t) \right>
.
\] 
Using the estimates above and the coarea formula, we have
\begin{align*}
|S|
&\lesssim
|\phi\sin\alpha|\int_0^{2\pi} \sum_{x\in R \, :\, \arg (x-z) + \cot \alpha \log|x-z| = \theta } |x-z|^2 \,d\theta
\\
&=
|\phi|
\int
\left<
e^{i\alpha}(x(t)-z)
,
\dot x(t)
\right>
\,dt
\\
&=
|\phi|
\int
\dist((\nu_x)_\alpha, z) \,d\cH^1(x)
.
\end{align*}

To prove \Cref{lemma:luzin-spiral}, we use the trivial estimate $\dist((\nu_x)_\alpha, z) \leq |x-z|$ and proceed as in the proof of \Cref{lemma:luzin-rot}. To prove \Cref{var2-spiral}, we proceed as in the proof of \Cref{var2}.
\end{proof}

For $x_1, x_2$, we define $x_3 = x_1 \star_{\alpha} x_2$ if $\rho_\alpha(x_3)$ can be replaced by $\rho_\alpha(x_1), \rho_\alpha(x_2)$. Explicitly, this means $\phi_1 + \phi_2 = \phi_3$ and $v_1 + e^{i(\phi_1)_\alpha}v_2 = v_3$, where $v_j = z_j(1-e^{i(\phi_j)_\alpha})$.

It is easy to check that with $\star_\alpha$ in place of $\star$, the arguments in \Cref{sec:rotations} still hold with very little modification, giving us a proof of \Cref{theorem:rotations-on-line-spiral}. (One small issue is that since the size of $E$ can change, we need to apply a correction factor to \Cref{lemma:luzin-spiral} and \Cref{var2-spiral}. However, we can ensure that at any point in the transformations, our set is never more than twice its initial size, so that the correction factor is bounded by an absolute constant.)

The statement at the end of \Cref{theorem:rotations-on-line-spiral} about $\alpha \equiv \pi/2 \pmod{\pi}$ follows from the fact that for such $\alpha$, if $x_3 = x_1 \star_{\alpha} x_2$, then the centers of the three dilations are collinear.

\subsection{Circles}\label{subsection:circles}

We briefly sketch the proof of \Cref{corollary:circles-dilation}.

\begin{proof}[Proof of \Cref{corollary:circles-dilation}.]
Let $E$ be a circle. Let $\epsilon> 0$ (to be specified later). By \Cref{theorem:translations}, there is a polygonal path $P = \bigcup_{i=1}^n L_i \subset \R^2$ with each $L_i$ a line segment, and for each $i$ there exists a direction $\theta_i\in\P^1$, such that
\begin{equation}
\label{eq:proof-circles-dilation-first-ineq}
|\bigcup_{i}\bigcup_{p\in L_i} (p+\{x\in E:\,\theta_x\not\in B(\theta_i,\eps)\})|< \epsilon.
\end{equation}

By \eqref{thetai}, we can assume that $\theta_{L_i} \not \in B(\theta_i, \eps)$ (recall that $\theta_{L_i}$ is the direction of the line segment $L_i$). By the fact that the tangent direction changes continuously as we move around the circle, there is an $\eps' < \eps$ such that 
\begin{equation}
|\bigcup_{i}\bigcup_{p\in L_i} (p+\{x\in E:\,\theta_x\not\in B(\theta_i,\eps')\})|< 2\eps.
\end{equation}
Since $\eps'<\eps$, we have $\theta_{L_i} \not \in \cl B(\theta_i, \eps')$.

Fix $\alpha = \pi/2$. Then $\theta_{L_i}$ is the projective center of the translation along $L_i$, and $(\nu_x)_\alpha$ is the tangent line at $x\in E$.

Let $\ell_i \in (\P^2)^*$ be the line through the center of $E$ of direction $\theta_{L_i}$. We fix an $i$, and for each $x\in E$ we denote by $\overline{x}$ the reflection of the point $x\in E$ across the line $\ell_i \cap \R^2$.

Let $\epsilon_i > 0$ (to be specified later). We can apply \Cref{theorem:rotations-on-line-spiral} to the translation along $L_i$, with $\alpha = \pi/2$ and line $\ell_i$, to replace this translation by a polygonal path of dilations $\bigcup_{j} L_{i,j} \subset \Sim^+(\R^2)$ such that 
\begin{equation}
\label{eq:bound-with-step2}
|\bigcup_{j}\bigcup_{p\in L_{i,j}} p(\{x\in E:\,(\nu_x)_\alpha\cap\ell_i\cap B(u_{i,j},\eps_i)=\emp\})|<\eps_i
\end{equation}
for some $u_{i,j} \in \ell_i$.

Those points $x$ of the circle $E$ for which the tangent line $(\nu_x)_\alpha$ intersects $\ell_i\cap B(u_{i,j},\eps_i)$ lie on circular arcs that are symmetric with respect to $\ell_i \cap \R^2$. Therefore we can find some $y_{i,j} \in E$ and $\eps_i'$ such that
\begin{equation}
\label{eq:bound-with-step2}
|\bigcup_{j}\bigcup_{p\in L_{i,j}} p(E\setminus(B(y_{i,j}, \eps_i')\cup B(\bar y_{i,j},\eps_i'))|< \eps_i.
\end{equation}

By the analogue of \Cref{remark:small-nbhd-initial-mvmt} and the small neighborhood lemma, we can also ensure that
\begin{equation}
\label{eq:bound-with-step1-small-nbhd}
|\bigcup_{i,j}\bigcup_{p\in L_{i,j}} p(\{x\in E:\,\theta_x\not\in B(\theta_i,\eps')\})|
< 3\eps.
\end{equation}

Since $\theta_{L_i} \not\in \cl B(\theta_i, \eps')$, it follows that if $r_i$ is small enough, then for each $y\in E$, $\{x\in E:\,\theta_x\in B(\theta_i,\eps')\} \cap (B(y,r_i)\cup B(\bar y,r_i))$ is either empty or is one arc of angle at most $\eps'$. Hence, if we choose $\eps_i$ so small that $\eps_i'<r_i$, then 
\[
E_{i,j}:=\{x\in E:\,\theta_x\not\in B(\theta_i,\eps')\}
\cup
(E\setminus(B(y_{i,j}, \eps_i')\cup B(\bar y_{i,j},\eps_i'))
\]
is a circular arc of angle at least $2\pi-\eps'$.

Thus, by combining \eqref{eq:bound-with-step2} and \eqref{eq:bound-with-step1-small-nbhd}, we have
\begin{equation}
|\bigcup_{i,j}\bigcup_{p\in L_{i,j}} p(E_{i,j})|
< 3\eps + \sum_i \eps_i.
\end{equation}

This gives us a movement of a subarc of $E$ of angle $2\pi-\eps'$ covering area less than $3\eps + \sum_i \eps_i$. Therefore by choosing the parameters small enough, we can move an arbitrarily large sub-arc covering arbitrarily small area.

Furthermore, as for isometries, we can construct not just one but a dense open set of movements. Therefore we can ensure that the radius of the circular arc remains very close to the radius of the original arc during the movement.
\end{proof}

By repeated applications of \Cref{theorem:rotations-on-line-spiral}, we obtain in the limit a Besicovitch set and a Nikodym set. The proofs proceed in the same way as in \Cref{section:besi-niko}. The result for Besicovitch sets in the special case when $E$ is a circle is stated below; for Nikodym sets, see \Cref{corollary:nikodym-circles-dilations}.

\begin{corollary}
Let $E$ be a circle. For every path $P_0$ in $\Isom^+(\R^2)$, and for any neighborhood of $P_0$ in $\Sim^+(\R^2)$, there is a path $P$ in the neighborhood of $P_0$ with the same endpoints as $P_0$, and there is a Borel mapping $p \mapsto x_p \in E$ such that
\[
|\bigcup_{p\in P} p(E\setminus\{x_p\})|=0.
\]
\end{corollary}

\providecommand{\bysame}{\leavevmode\hbox to3em{\hrulefill}\thinspace}
\providecommand{\MR}{\relax\ifhmode\unskip\space\fi MR }
\providecommand{\MRhref}[2]{%
  \href{http://www.ams.org/mathscinet-getitem?mr=#1}{#2}
}
\providecommand{\href}[2]{#2}

\end{document}